\newtheorem{theorem}{Theorem}[section]
\newtheorem{lemma}[theorem]{Lemma}
\newtheorem{corollary}[theorem]{Corollary}
\newtheorem{remark}[theorem]{Remark}
\newcommand{\R}{{\mathord{\mathbb R}}}
\newcommand{\C}{{\mathord{\mathbb C}}}
\newcommand{\Sp}{{\mathord{\mathbb S}}}
\newcommand{\re}{{\mathop{\rm Re }}}
\begin{document}
\title{Which magnetic fields support a zero mode?}
\author[Rupert L. Frank and Michael Loss]{}
\subjclass[2010]{Primary: 35F50; Secondary: 81V45, 47J10.}
\keywords{}

\email{{rlfrank@caltech.edu}}
\email{{loss@math.gatech.edu}}
\thanks{\copyright~2020 by the authors. Reproduction of this article by any means permitted for non-commercial purposes}

\maketitle

\centerline{\scshape Rupert L. Frank and Michael Loss}
\smallskip

{\footnotesize
 \centerline{Department of Mathematics, California Institute of Technology}
 \centerline{Pasadena, CA 91125, United States of America}

 \centerline{\& Department of Mathematics, LMU Munich}
\centerline{Theresienstr. 39, 80333 M\"unchen, Germany}

\centerline{\& Munich Center for Quantum Science and Technology}
\centerline{Schel\-ling\-str.~4, 80799 M\"unchen, Germany}
}

{\footnotesize
 \centerline{School of Mathematics, Georgia Institute of Technology}
 \centerline{Atlanta, GA 30332, United States of America}
}
\begin{abstract} 
	This paper presents some results concerning the size of magnetic fields that support zero modes for the three dimensional Dirac equation and related problems for spinor equations.  It is a well known fact that for the Schr\"odinger in three dimensions to have a negative energy bound state, the  $3/2$ norm of the potential has to be greater than the Sobolev constant. We prove an analogous result for the existence of zero modes, namely that the $3/2$ norm of the magnetic field has to greater than twice the Sobolev constant. The novel point here is that the spinorial nature of the wave function is crucial. It leads to an improved diamagnetic inequality from which the bound is derived. While the results are probably not sharp, other equations are analyzed where the
	results are indeed optimal.
\end{abstract}


\section{Introduction}

Zero modes for the three dimensional Dirac equation play a role in several areas of physics and mathematics,
mostly in an obstructionist way. In physics they show up by creating non-perturbative effects for fermionic determinants in QED \cite{Fry}. They also cause difficulties for the semiclassical energy asymptotics of atoms interacting with magnetic fields (see \cite{ErSo1} and \cite{erSo2}).  They can render matter unstable; fortunately in a region for the physical parameters that is far away from the ones occurring in nature.  Since this is the context in which zero modes were discovered we describe the situation in a bit more detail.

Electrons carry spin, but in the absence of magnetic fields this does not enter the non-relativistic Schr\"odinger equation which determines the dynamics of atoms in any way. Spin, together with the exclusion principle determines the symmetry type of the spatial part of the wave function.
In the presence of magnetic fields, however, it is necessary to include the spin-magnetic field interaction. As an example, the quadratic form of the Hamiltonian describing a hyrogenic atom  (in suitable units) is given by
$$
(\psi, H(A) \psi)= \Vert \sigma \cdot (-i\nabla - A)\psi \Vert_2^2 - Z \int_{\R^3} \frac{|\psi|^2}{|x|} dx =
 \Vert (-i\nabla - A)\psi \Vert_2^2 -(\psi, \sigma \cdot B \psi)  - Z \int_{\R^3} \frac{|\psi|^2}{|x|} dx \ ,
$$
where $\psi(x) = \left(\begin{array}{c} \psi_1(x) \\ \psi_2(x) \end{array} \right)$ is a normalized $2$-spinor, 
$$
\Vert\psi \Vert_2^2= \int_{\R^3} |\psi(x)|^2 dx = \int_{\R^3} [|\psi_1(x)|^2 + |\psi_2(x)|^2] dx = 1\ ,
$$
$A$ is the vector potential with $B={\rm curl }A$, $\sigma$ denotes the vector of Pauli matrices
$$
\sigma_1 = \left( \begin{array}{cc} 0 & 1 \\1 & 0 \end{array}\right) \ , \ \sigma_2=  \left( \begin{array}{cc} 0 & -i \\ i & 0 \end{array}\right) \ ,  \ \sigma_3=\left( \begin{array}{cc} 1 & 0 \\ 0 & -1 \end{array}\right)
$$
and $Z$ is the nuclear charge number.
It is easy to see that the ground state energy of this system, through a suitable choice of the magnetic field can be made arbitrarily negative. It is physically reasonable to try to stabilize the problem by adding the field energy, i.e., one considers
$$
\mathcal E(\psi, A) = (\psi, H(A) \psi)+ \frac{1}{8\pi \alpha^2} \int_{\R^3} |B(x)|^2 dx \,.
$$
This functional should then be  minimized with respect to $A$ and the normalized  $\psi$. If the infimum of this functional is $-\infty$ then we say that the problem is unstable. Here $\alpha \approx \frac{1}{137}$ is the Sommerfeld fine-structure constant. That $\alpha$  appears in such an odd place is due to the choice of units.
This variational problem is analyzed in \cite{FLL} where it is shown that instability implies the existence of zero modes.
The zero mode equation is
\begin{equation} \label{zeromode}
\sigma\cdot(-i\nabla-A(x)) \psi = 0 \ ,
\end{equation}
where the spinor $\psi$ is normalized and  the magnetic field $B={\rm curl } A$ has finite energy. To see how zero modes come into play consider the rescaled zero mode pair
$$
\psi_\lambda(x) = \lambda^{3/2} \psi(\lambda x) \ , \ A_\lambda(x) = \lambda A(\lambda x)  \ .
$$
Obviously  $\sigma \cdot (-i\nabla - A_\lambda) \psi_\lambda = 0$ and $\Vert \psi_\lambda \Vert_2= \Vert \psi \Vert_2$.
The magnetic field $B = {\rm curl }A$ scales as $B_\lambda(x) = \lambda^2 B(\lambda x)$ and a simple calculation yields
$$
\mathcal E (\psi_\lambda, A_\lambda) = \lambda\left( - Z \int_{\R^3} \frac{|\psi|^2}{|x|} dx +  \frac{1}{8\pi \alpha^2} \int_{\R^3} |B(x)|^2 dx\right) \ ,
$$
which shows that for $Z\alpha^2$ large enough, the energy can be made arbitrarily negative by letting $\lambda$ tend to infinity. 
In \cite{FLL} it was also shown that the critical $Z$ beyond which there is collapse is given by
\begin{equation} \label{criticalzee}
Z_c = \inf \left\{ \frac{1}{8\pi \alpha^2} \frac{\int_{\R^3} |B(x)|^2 dx}{ \int_{\R^3} \frac{|\psi|^2}{|x|} dx} \right\}
\end{equation}
where the infimum is taken over all normalized spinors $\psi$ and all magnetic fields with finite energy
that satisfy the zero mode equation \eqref{zeromode}.

The first example of a zero mode pair was given in \cite{LossYau} and it is the following:
\begin{equation} \label{lossyau1}
\psi(x) = \frac{1+i \sigma \cdot x}{(1+|x|^2)^{3/2}} \phi_0
\end{equation}
and
\begin{equation} \label{lossyau2}
A(x) = \frac{3}{(1+|x|^2)^2} \left( (1-|x|^2) w +2 (x \cdot w) x + 2 x \wedge w\right),
\end{equation}
where $\phi_0$ is a constant spinor and $w = \langle \phi_0, \sigma \phi_0\rangle$. Here $\langle\,,\, \rangle$ is the inner product in $\C^2$. The magnetic field is given by
\begin{equation} \label{lossyau3}
B(x) = {\rm curl} A(x) =  \frac{12}{(1+|x|^2)^3} \left( (1-|x|^2) w +2 (x \cdot w) x + 2 x \wedge w \right) \ .
\end{equation}
It is amusing to note that the field lines of $A$ as well as $B$ wind around a family of nested tori and are in fact
 the great circles associated with the Hopf-fibration on $\Sp^3$ pulled back to $\R^3$ by the stereographic projection.
 
A considerable amount of research has gone into finding more examples and trying to understand the structure of the zero mode equation. Erd\"os and Solovej \cite{ES} showed that the above example is a special case of a class of fields that emerge by pulling back the two dimensional zero modes of Aharonov-Casher \cite {AC} using the Hopf map.  Other examples where given by Adam, Muratori and Nash \cite{AMN} in connection with the importance of zero modes in QED for the understanding of anomalies. Further examples were found also by Elton \cite{E}, by Saito and Umeda \cite{saitoumeda1} and more recently by Ross-Schroers \cite{RS}.

In contrast to the two dimensional problem solved in \cite{AC}  there does not seem to be a particular geometric structure that allows to classify all the zero modes. This was shown by Balinsky-Evans in \cite{BE}
where they show that the magnetic fields that do {\it not } support zero modes form an open dense set in $L^{3/2}(\R^3:\R^3)$. They also showed that  zero modes disappear by varying the size of the magnetic field; there is at most a discrete set of $t$ values such that $tB$ supports a zero mode.
Thus, it is in general difficult to say whether a certain magnetic field supports a zero mode or not. The problem of absence of zero modes was analyzed by Kalf, Okaji and Yamada \cite{Kalfetal1} and \cite{Kalfetal2}. The authors give sharp pointwise conditions on the decay rate of the vector potential that guarantees absence of zero modes. Other results in this direction can be found in \cite{saitoumeda2}. We mention in this context \cite{Cossettietal} where absence of zero modes (in fact, absence of any eigenvalues) was proved, however, under a certain implicit smallness condition on the magnetic field. For further results on the absence of positive eigenvalues for Schr\"odinger as well as Dirac operators see \cite{hundertmarketal}.

In this paper we take a different route and try to address the question of absence of zero modes in a quantitative fashion.
One traditional measure is the size of
$$
\Vert B \Vert_{3/2} =\left( \int_{\R^3} |B(x)|^{3/2} dx \right)^{2/3}
$$
which is invariant under scaling, in contrast to the field energy. It is straightforward to see that if 
$\Vert B \Vert_{3/2}$ is too small, then there cannot be zero modes. Indeed, formally squaring the Dirac operator $\sigma \cdot (-i \nabla -A)$ yields the Schr\"odinger equation
\begin{equation}\label{energy}
(-i\nabla -A)^2 \psi - \sigma \cdot B \psi = 0  \ .
\end{equation}
The associated quadratic form is given by 
$$
\Vert (-i\nabla - A)\psi\Vert_2^2 -\int_{\R^3} \langle \psi, \sigma \cdot B \psi\rangle dx
\ge \Vert (-i\nabla - A)\psi\Vert^2 -\int_{\R^3} | \psi|^2 |B(x)|  dx
$$
which in turn, using the diamagnetic inequality and H\"older's inequality, is bounded below by
$$
\Vert \nabla |\psi|\Vert_2^2 - \Vert B \Vert_{3/2} \Vert \psi \Vert_6^2 \ .
$$
Sobolev's inequality states that
$$
\Vert \nabla |\psi|\Vert_2^2 \ge S_3 \Vert \psi \Vert_6^2
$$
where 
$$
S_3 = \frac{3}{4} |\Sp^3|^{2/3}  = \frac{3}{4} \left(2\pi^2\right) ^{2/3} = 3 \left(\frac{\pi}{2}\right)^{4/3} \ .
$$
Thus, we see that for  $\Vert B\Vert_{3/2} < S_3$ there cannot be any zero modes.

Needless to say that the above argument is much too rough, in particular the spinor structure is completely ignored. The aim of this paper is to explore the particular role played by the spinor which allows us to give an improvement. 
Our first result is

\begin{theorem}\label{magnetic} 
Let $B \in L^{3/2}(\R^3: \R^3)$ be a magnetic field, i.e., ${\rm div}\, B=0$. If \eqref{zeromode} has a weak solution $0\not\equiv \psi \in L^p(\R^3: \C^2)$ for some $3/2<p<\infty$, then
$$
\Vert B\Vert_{3/2} \ge  2\,S_3 \ .
$$
\end{theorem}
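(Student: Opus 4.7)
The plan is to combine three ingredients: an upper bound for $\int|D\psi|^2$ (with $D = -i\nabla - A$) of the form $\|B\|_{3/2}\|\psi\|_6^2$ coming from the squared zero-mode equation, a lower bound $\int|D\psi|^2 \geq 2\int|\nabla|\psi||^2$ coming from a spinorial improvement of the diamagnetic inequality, and the scalar Sobolev inequality $\int|\nabla|\psi||^2 \geq S_3\|\psi\|_6^2$. Chaining these gives $2 S_3\|\psi\|_6^2 \leq \|B\|_{3/2}\|\psi\|_6^2$, and the conclusion follows upon dividing by $\|\psi\|_6^2$ (positivity of which is guaranteed by the hypothesis $\psi\not\equiv 0$ together with standard regularity considerations for the zero-mode equation under the $L^p$ assumption).

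For the upper bound I would square the zero mode equation using the Lichnerowicz-type identity $(\sigma \cdot D)^2 = D^2 - \sigma \cdot B$. This yields $D^2\psi = (\sigma\cdot B)\psi$ weakly, and pairing with $\psi$ and integrating by parts (legitimate by a standard cut-off and approximation argument under the $L^p$ hypothesis on $\psi$ and the $L^{3/2}$ hypothesis on $B$) gives
\[
\int_{\R^3}|D\psi|^2\,dx = \int_{\R^3}\langle \psi, \sigma\cdot B\,\psi\rangle\,dx \leq \int_{\R^3}|B|\,|\psi|^2\,dx \leq \|B\|_{3/2}\|\psi\|_6^2 ,
\]
the first inequality using that $\sigma\cdot B(x)$ has eigenvalues $\pm |B(x)|$ on $\C^2$, and the second being H\"older with exponents $3/2$ and $3$.

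The main obstacle is establishing the improved diamagnetic inequality $\int|D\psi|^2 \geq 2\int|\nabla|\psi||^2$ for zero modes, since treating $|\psi|$ as if it were a scalar wavefunction yields only the factor $1$ (precisely the argument sketched in the introduction that leads to $\|B\|_{3/2}\geq S_3$). My strategy here is to decompose $\psi = u\chi$ where defined, with $u = |\psi|$ and $\chi$ a unit spinor, and to exploit the pointwise identity
\[
|D_j\psi|^2 = (\partial_j u)^2 + u^2|\tilde D_j\chi|^2, \qquad \tilde D_j\chi = -i\partial_j\chi - A_j\chi,
\]
which splits $|D\psi|^2$ into a scalar gradient piece and a transverse spinorial piece. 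Rewriting the zero-mode equation as $(\sigma\cdot\tilde D)\chi = i(\sigma\cdot \nabla \log u)\chi$ gives an algebraic relation between these pieces; taking the squared norm of this relation, and using the anticommutation $\{\sigma_j,\sigma_k\} = 2\delta_{jk}$ together with integration by parts to handle the resulting curvature-type term $i\epsilon_{jkl}\langle\tilde D_j\chi,\sigma_l \tilde D_k\chi\rangle$, should produce the bound $\int u^2\sum_j|\tilde D_j\chi|^2 \geq \int|\nabla u|^2$, which is the desired factor-of-$2$ improvement.

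The hard part will be extracting this inequality cleanly: pointwise it need not hold (for instance when the Berry connection of $\chi$ matches $A$ and the variation of $\chi$ is concentrated in a complex null direction), so one must exploit cancellations that appear only after integration and that use the full differential content of $\sigma\cdot D\psi = 0$, not just its consequences at a point. Some care is also required at the zero set $\{|\psi|=0\}$, where the decomposition $\psi=u\chi$ is not defined; this can be handled by replacing $u$ with $\sqrt{u^2+\varepsilon}$, deriving the estimates uniformly in $\varepsilon$, and letting $\varepsilon\downarrow 0$. Once the improved diamagnetic inequality is in hand, Sobolev's inequality applied to the scalar function $|\psi|\in W^{1,2}(\R^3)$ closes the argument and yields $\|B\|_{3/2}\geq 2S_3$.
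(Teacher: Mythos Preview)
Your overall scheme --- upper bound $\int|D\psi|^2 \le \|B\|_{3/2}\|\psi\|_6^2$, an improved diamagnetic lower bound, then Sobolev on $|\psi|$ --- is reasonable, and the upper bound step is fine. The gap is in the claimed integrated inequality
\[
\int_{\R^3}|D\psi|^2\,dx \ \ge\ 2\int_{\R^3}\bigl|\nabla|\psi|\bigr|^2\,dx .
\]
Your sketch reduces this to showing that the curvature-type cross term
$\int u^2\,\re\sum_{j<k}\langle \tilde D_j\chi,\sigma_j\sigma_k \tilde D_k\chi\rangle\,dx$
is nonpositive after integration by parts, but you give no mechanism for this sign, and none is apparent: the term involves both the magnetic curvature $\sigma\cdot B$ and derivatives of $u=|\psi|$, and there is no evident pairing that forces it to have a sign. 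The \emph{pointwise} refined Kato inequality that actually holds for zero modes is only $|\nabla|\psi||^2\le \tfrac{2}{3}|D\psi|^2$, which integrates to a factor $3/2$, not $2$; plugging that into your chain yields only $\|B\|_{3/2}\ge \tfrac{3}{2}S_3$.

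The paper obtains the factor $2$ by a different device: it does \emph{not} try to improve the diamagnetic constant to $2$, but instead applies Sobolev to $|\psi|^{1/2}$ rather than to $|\psi|$, and tests the squared equation against $\psi/|\psi|_\varepsilon$ rather than against $\psi$. Concretely, one proves the pointwise bound
\[
\bigl|\nabla|\psi|_\varepsilon^{1/2}\bigr|^2 \ \le\ \tfrac12\,\re\Bigl\langle (\nabla-iA)\tfrac{\psi}{|\psi|_\varepsilon},(\nabla-iA)\psi\Bigr\rangle,
\]
where the $\tfrac23$--Kato inequality is used precisely to kill the remainder $\tfrac{1}{4|\psi|_\varepsilon}(2|D\psi|^2-3|\nabla|\psi||^2)$. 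Integrating and using the zero-mode equation turns the right side into $\tfrac12\int\langle\psi/|\psi|_\varepsilon,\sigma\cdot B\psi\rangle\le \tfrac12\int|B|\,|\psi|$. Now Sobolev on $|\psi|^{1/2}$ and H\"older pair the $L^3$-norm of $|\psi|$ on both sides, giving $S_3\le \tfrac12\|B\|_{3/2}$. The factor $2$ thus comes from the exponent $1/2$, not from a stronger diamagnetic constant. Your $\varepsilon$-regularization idea and your handling of the upper bound are compatible with this argument; what is missing is the switch from $|\psi|$ to $|\psi|^{1/2}$.
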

It is an open question whether the estimate in Theorem \ref{magnetic} is sharp or not. If one considers
magnetic fields that are in $L^{3/2}_w$, then we provide, under suitable regularity assumptions on the spinor $\psi$ an estimate on the  $L^{3/2}_w$-norm of the magnetic field that is sharp in the sense that it is
saturated for a magnetic monopole field. Of course, the monopole field is outside the class of fields we consider but it provides an informative example as to the inner workings of the various inequalities used in the proof of Theorem \ref{magnetic}. This will be presented in Section \ref{optimality}.

In \cite{LossYau} it was observed that the spinor \eqref{lossyau1} solves the equation
$$
-i \sigma \cdot \nabla \psi = \frac{3}{1+|x|^2} \psi \ ,
$$
and by setting 
$$
A = \frac{3}{1+|x|^2} \frac{\langle \psi, \sigma \psi\rangle}{\langle \psi, \psi \rangle}
$$
one obtains \eqref{lossyau2}. The basis for this observation is the identity
\begin{equation} \label{spinidentity}
\sigma \cdot \frac{\langle \psi, \sigma \psi\rangle }{\langle \psi, \psi \rangle} \psi = \psi \ .
\end{equation}
It is therefore a reasonable question to consider the equation
$$
-i \sigma \cdot \nabla \psi =3\lambda(x)\psi
$$
and to try to find necessary conditions on $\lambda$ for a solution to exist.
In a first attempt one could consider 
\begin{align*}
	\int_{\R^3} |\nabla \psi|^2 dx & = \int_{\R^3} \langle -i\sigma\cdot \nabla \psi, -i\sigma \cdot \nabla \psi\rangle dx
	= 9 \int_{\R^3} |\lambda|^2 |\psi|^2 dx \\
	& \le 9 \left(\int_{\R^3} |\lambda|^3 dx\right)^{2/3} \left(\int_{\R^3} |\psi|^6 dx\right)^{1/3} \ .
\end{align*}
Using Sobolev's inequality one obtains the bound
$$
\left(\int_{\R^3} |\lambda|^3 dx\right)^{2/3} \ge
\frac{1}{9}\,S_3 \ .
$$
We can do substantially better than that.
We have
\begin{theorem}\label{spinor}
Assume that $\lambda \in L^3(\R^3)$ is a real function. If the equation 
$$
-i \sigma \cdot \nabla \psi = 3\lambda(x) \psi
$$
has a weak solution $0\not\equiv\psi \in L^p(\R^3: \C^2)$ for some $3/2<p<\infty$, then
\begin{equation} \label{sharpspinor}
\left( \int_{\R^3} |\lambda|^3 dx \right)^{2/3} \ge \frac{1}{3}\,S_3 = \frac14\, |\Sp^3|^{2/3} \ .
\end{equation}
There is equality in \eqref{sharpspinor} if 
$$
\lambda(x) = \frac{1}{1+|x|^2} \ ,
$$
in which case \eqref{lossyau1} is a solution. 
\end{theorem}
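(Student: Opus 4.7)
The proof plan is to exploit the conformal invariance of the equation, transferring it to $\Sp^3$ where the Killing spinor provides the natural extremizer. Under stereographic projection with conformal factor $\Omega(x) = 2/(1+|x|^2)$, the standard spinorial identification $\tilde\psi = \Omega^{-1}\psi$ yields $D_{\R^3}\psi = \Omega^2\,\tilde D_{\Sp^3}\tilde\psi$, so the equation $-i\sigma\cdot\nabla\psi = 3\lambda\psi$ becomes $\tilde D_{\Sp^3}\tilde\psi = f\tilde\psi$ on $\Sp^3$ with $f := 3\lambda/\Omega$. Since $dx = \Omega^{-3}\,d\mathrm{vol}_{\Sp^3}$, a direct change of variables produces the identity $\|\lambda\|_{L^3(\R^3)}^2 = \tfrac{1}{9}\|f\|_{L^3(\Sp^3)}^2$, so the theorem reduces to showing $\|f\|_{L^3(\Sp^3)}^2 \geq \tfrac{9}{4}|\Sp^3|^{2/3}$.

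The analytic core is a sharp Sobolev-type bound for the Dirac operator on $\Sp^3$:
\[
\|\tilde D_{\Sp^3}\tilde\psi\|_{L^{3/2}(\Sp^3)}\geq \tfrac{3}{2}\,|\Sp^3|^{1/3}\,\|\tilde\psi\|_{L^3(\Sp^3)},
\]
with equality on Killing spinors, for which $|\tilde\psi|$ is constant and $\tilde D\tilde\psi = \tfrac{3}{2}\tilde\psi$. Granted this, H\"older's inequality applied to the equation gives $\|\tilde D_{\Sp^3}\tilde\psi\|_{L^{3/2}} = \|f\tilde\psi\|_{L^{3/2}}\leq \|f\|_{L^3(\Sp^3)}\|\tilde\psi\|_{L^3(\Sp^3)}$, and cancelling $\|\tilde\psi\|_{L^3(\Sp^3)} > 0$ yields $\|f\|_{L^3(\Sp^3)} \geq \tfrac{3}{2}|\Sp^3|^{1/3}$, i.e., $\|\lambda\|_{L^3(\R^3)}^2 \geq \tfrac{1}{4}|\Sp^3|^{2/3} = \tfrac{1}{3}S_3$. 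Both H\"older (saturated because $|\lambda| \propto |\psi|$ pointwise for the Loss--Yau pair) and the sharp Sobolev-Dirac inequality (saturated by the Killing spinor with constant norm and constant Dirac eigenvalue) are attained simultaneously by the Killing spinor, which via inverse stereographic projection is exactly the Loss--Yau spinor \eqref{lossyau1} with $\lambda(x) = 1/(1+|x|^2)$; this accounts for the equality statement in \eqref{sharpspinor}.

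The main obstacle is establishing the sharp Sobolev-Dirac inequality on $\Sp^3$. The natural route combines the Lichnerowicz formula $\tilde D^2 = \tilde\nabla^*\tilde\nabla + \tfrac{3}{2}$, Kato's diamagnetic inequality $|\tilde\nabla|\tilde\psi||\leq|\tilde\nabla\tilde\psi|$, and the sharp scalar Sobolev (Yamabe) inequality on $\Sp^3$ with constant $\mu(\Sp^3) = 6|\Sp^3|^{2/3}$; the Killing spinor saturates all three structurally because $|\tilde\psi|$ is constant. Alternatively, the sharp constant coincides with that produced by Hijazi's inequality $\lambda_1(\tilde D)^2\,\mathrm{Vol}^{2/3}\geq \tfrac{n}{4(n-1)}\mu$ for $n = 3$, which is known to be saturated on the round sphere. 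A minor technical point is that the hypothesis $\psi \in L^p(\R^3;\C^2)$ for some $p \in (3/2,\infty)$, together with the equation and $\lambda \in L^3$, must be upgraded by elliptic bootstrap for the Dirac operator to ensure $\tilde\psi \in L^3(\Sp^3)$ so that the preceding norms are all finite and nonzero.
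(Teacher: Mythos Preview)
Your approach is genuinely different from the paper's, and it has a real gap. The paper's proof works directly on $\R^3$: it establishes a refined Kato-type inequality $|\nabla|\psi||^2\le \tfrac23\,|(\nabla-i\lambda\sigma)\psi|^2$ valid precisely for solutions of $-i\sigma\cdot\nabla\psi=3\lambda\psi$ (this is where the spinorial projection $\Pi$ enters), combines it with an algebraic identity for $|\nabla|\psi|_\varepsilon^{1/2}|^2$, integrates by parts using the equation, and then applies the scalar Sobolev and H\"older inequalities. The factor $\tfrac23$ in the refined Kato is exactly what yields the sharp constant $\tfrac13 S_3$.

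Your core ingredient, the inequality $\|\tilde D_{\Sp^3}\tilde\psi\|_{L^{3/2}}\ge \tfrac32\,|\Sp^3|^{1/3}\,\|\tilde\psi\|_{L^3}$ for \emph{all} spinors, is conformally equivalent to the sharp constant $C_s=\tfrac32|\Sp^3|^{1/3}$ in the inequality $\|\!-\!i\sigma\cdot\nabla\psi\|_{3/2}\ge C_s\|\psi\|_3$ on $\R^3$. The paper singles this out (Section~8) as an open problem: it conjectures that $C_s=\tfrac32|\Sp^3|^{1/3}$, but only proves the diagonal quadratic-form version $|(\phi,D^{-1}\phi)|\le C\|\phi\|_{3/2}^2$ (Theorem~\ref{spinHLS}), noting explicitly that since the kernel of $D^{-1}$ is not sign-definite, the bilinear/operator-norm constant need not coincide with the diagonal one. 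So you are invoking, as a lemma, an inequality at least as strong as the theorem you are trying to prove, and one that the authors do not know how to establish.

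Your two suggested routes do not close this gap. The combination Lichnerowicz $+$ Kato $+$ sharp Sobolev is $L^2$-based: it gives $\int|D\tilde\psi|^2\ge \tfrac34\int|\tilde\psi|^2+\tfrac34|\Sp^3|^{2/3}\|\tilde\psi\|_6^2$, which is the wrong pairing of exponents and is not saturated by the Killing spinor (for the Killing spinor the two sides are $\tfrac94$ and $\tfrac32$ times $\int|\tilde\psi|^2$). The ordinary Kato inequality misses precisely the factor that the paper's \emph{refined} Kato supplies---and that refinement is only available for spinors satisfying the equation, not for arbitrary $\tilde\psi$. As for Hijazi's inequality, it bounds the lowest eigenvalue of a conformally rescaled Dirac operator in terms of the Yamabe constant; it does not give an $L^{3/2}$--$L^3$ mapping bound for $\tilde D_{\Sp^3}$. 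The paper does explain (Section~\ref{hijaziapproach}) how a Hijazi-type argument recovers Theorem~\ref{spinor}, but by a \emph{different} conformal change, namely $e^u=\lambda$, which turns the equation into an eigenvalue problem with eigenvalue $3$ and then invokes Hijazi---this requires $\lambda>0$ and some regularity, assumptions absent from the theorem and from your proposal.
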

This result is very closely related to an inequality of Hijazi for eigenvalues of Dirac operators in conformal geometry. We discuss this connection in detail in Section \ref{hijaziapproach}. 

In a very informative paper, Dunne and Min \cite{Dunne-Min} obtained  the zero modes given by \eqref{lossyau1} and \eqref{lossyau2}  as zero modes of a Dirac equation on $\Sp^3$ with a field of fixed helicity pulled back via stereographic projection. This picture allowed them to obtain examples of zero modes in any odd dimensions $d$. In arbitrary (even or odd) dimension $d$, the dimension of the spinors on $\R^d$ is $N=2^\nu$, where $\nu$ is related to $d$ by $d=2\nu+1$ if $d$ is odd and $d=2\nu$ if $d$ is even. The Dirac matrices $\gamma_j$, $j=1, \dots, d$, satisfy
$$
\gamma_i \gamma_j + \gamma_j \gamma_i = 2 \delta_{i,j} \ ,
\qquad 1 \le i,j \le d \ .
$$
In odd dimensions, the zero mode spinors of Dunne and Min are of the form
\begin{equation}
	\label{eq:dunneminspinor}
	\psi(x) = \frac{1 + i \gamma \cdot x}{(1+|x|^2)^{\frac{d}{2}}} \left[\begin{array}{c}  \phi_0 \\ 0\end{array}\right]
\end{equation}
where $\left[\begin{array}{c} \phi_0 \\  0\end{array}\right]$ is a well-chosen constant normalized spinor of dimension $N= 2^\nu= 2^{\frac{d-1}{2}}$. When $d=3$ this reduces to the previous case \eqref{lossyau1}. The vector potential
is of the form
$$
A_i(x) =\frac{d }{(1+|x|^2)^2} \left[ -2 \sum_{j=1}^d J_{ij} x_j + 2x_ix_{d}\right] \ ,\qquad i=1,2, \dots d-1 \ ,
$$
where $J= {\rm diag}(i\sigma_2, \dots, i\sigma_2, -i\sigma_2)$ and
$$
A_{d}(x) = d \left(\frac{1- |x|^2 +2 x_{d}^2}{(1+|x|^2)^2} \right) \ .
$$
Again, when $d=3$ this reduces to the case \eqref{lossyau2}. We note that this construction
seems to work only in odd dimension. In Appendix \ref{dirac} we give a more elementary derivation of the solutions found by Dunne and Min.

The existence
of such type of zero modes suggests generalizations of  Theorem \ref{magnetic} and Theorem \ref{spinor} to higher dimensions. We emphasize that, in contrast to the Dunne-Min construction, we do \emph{not} assume that $d$ is odd.

\begin{theorem} \label{genmagnetic}
	Let $d\geq 3$. If the equation
$$
\gamma \cdot (-i\nabla -A(x))\psi = 0
$$
has a weak solution $0\not\equiv \psi\in L^p(\R^d:\C^{2^\nu})$ for some $\frac{d}{d-1}<p<\infty$, then
$$
\left(\int_{\R^d} |B(x)|^{d/2} dx\right)^{2/d} \ge \nu^{-1/2} \frac{d-1}{d-2}\, S_d \ ,
$$
where $\nu=(d-1)/2$ if $d$ is odd and $\nu=d/2$ if $d$ is even, where
$$
|B(x)| = \left( \sum_{j<k} |\partial_j A_k(x) - \partial_k A_j(x)|^2 \right)^{1/2}
$$
and where $S_d = \frac{d(d-2)}{4} |\Sp_d|^{2/d}$ is the Sobolev constant.
\end{theorem}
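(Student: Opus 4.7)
The plan is to adapt the argument for Theorem \ref{magnetic} to arbitrary dimension $d\geq 3$. I would begin by squaring the Dirac operator: using the Clifford relations $\gamma_j\gamma_k+\gamma_k\gamma_j=2\delta_{jk}$, a direct computation yields
\[
(\gamma\cdot\pi)^2 = \pi^2 + i\sum_{j<k}\gamma_j\gamma_k B_{jk}, \qquad \pi := -i\nabla-A.
\]
Pairing with the zero mode $\psi$, integrating over $\R^d$, and justifying the integration by parts through an approximation argument that uses the hypothesis $\psi\in L^p$ for some $p>d/(d-1)$, gives the Pauli-type identity
\[
\int_{\R^d}|\pi\psi|^2\,dx = \int_{\R^d}\langle\psi,\mathbf{B}\psi\rangle\,dx, \qquad \mathbf{B}:=-i\sum_{j<k}\gamma_j\gamma_k B_{jk}.
\]

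Next I would bound the pointwise operator norm of the Hermitian matrix $\mathbf{B}(x)$. Bringing the skew-symmetric matrix $(B_{jk}(x))$ into canonical block-diagonal form with imaginary eigenvalues $\pm i\beta_\ell(x)$, $\ell=1,\dots,\nu$, one sees that in the corresponding orthonormal frame $\mathbf{B}(x)=\sum_\ell\beta_\ell(x)\,(-i\gamma_{2\ell-1}\gamma_{2\ell})$, a sum of mutually commuting Hermitian involutions. Its spectrum is therefore $\{\sum_\ell\pm\beta_\ell(x)\}$, whence $\|\mathbf{B}(x)\|_{\mathrm{op}}=\sum_\ell|\beta_\ell(x)|\leq\sqrt{\nu}\,|B(x)|$ by Cauchy--Schwarz. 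Combined with H\"older's inequality this yields
\[
\int|\pi\psi|^2\,dx\leq\sqrt{\nu}\,\|B\|_{d/2}\,\|\psi\|_{2d/(d-2)}^{2}.
\]
The theorem then reduces to proving the matching lower bound
\[
\int|\pi\psi|^2\,dx\geq \frac{d-1}{d-2}\,S_d\,\|\psi\|_{2d/(d-2)}^{2},
\]
the dimension-dependent generalization of the improved diamagnetic-Sobolev inequality underlying Theorem \ref{magnetic}. Since the plain diamagnetic inequality combined with Sobolev yields only the factor $1$ in place of $\frac{d-1}{d-2}$, the proof must exploit the zero mode structure. The natural strategy is to write $\psi=|\psi|\,e$ with $|e|=1$, deduce from $\gamma\cdot\nabla_A\psi=0$ (with $\nabla_A:=\nabla+iA$) the pointwise identity $|\nabla|\psi||^{2}=|\psi|^{2}|\gamma\cdot\nabla_A e|^{2}$, and combine it with the twistor-type Pythagorean identity $|\nabla_A e|^{2}=|Pe|^{2}+d^{-1}|\gamma\cdot\nabla_A e|^{2}$. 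Alternatively, one can conformally pull the problem back to $\Sp^d$, where the Lichnerowicz formula involves the sphere's scalar curvature $R=d(d-1)$, and combining it with a refined Kato inequality and the Yamabe--Sobolev inequality produces the required factor.

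The main obstacle is this improved Sobolev--diamagnetic inequality. The standard refined Kato inequality for harmonic spinors $|\nabla\psi|^{2}\geq\frac{d}{d-1}|\nabla|\psi||^{2}$ is on its own insufficient, since $\frac{d}{d-1}<\frac{d-1}{d-2}$ for all $d\geq 3$, so the full improvement really does need a careful generalization of the three-dimensional spinorial argument rather than a direct appeal to Kato. A secondary technical point is verifying that the regularity hypothesis $\psi\in L^p$ with $p\in(d/(d-1),\infty)$ is strong enough to justify all the integrations by parts and Sobolev-type estimates used along the way, rather than having to impose additional smoothness on $\psi$.
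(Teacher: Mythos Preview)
Your outline gets several important pieces right: the squared Dirac operator identity, the block-diagonalization of $B_{jk}$ yielding $\|\mathbf B(x)\|_{\mathrm{op}}\le\sqrt\nu\,|B(x)|$ (this is exactly the content of Appendix~\ref{genBfield}), and the observation that refined Kato alone gives only the factor $d/(d-1)$ rather than the needed $(d-1)/(d-2)$. But the reduction you make is the wrong one, and it creates a gap you do not close.

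By pairing the Lichnerowicz identity with $\psi$ itself you are committed to proving
\[
\int_{\R^d}|\pi\psi|^2\,dx \;\ge\; \frac{d-1}{d-2}\,S_d\,\|\psi\|_{2d/(d-2)}^2
\]
for zero modes. Neither of your sketches (a) or (b) yields this. For (a), the identities you propose combine to give $|\pi\psi|^2=\frac{d+1}{d}\,|\nabla|\psi||^2+|\psi|^2|P e|^2$, which is even \emph{weaker} pointwise than refined Kato; there is no evident mechanism to boost the constant to $(d-1)/(d-2)$. For (b), the Hijazi/conformal method in Section~\ref{hijaziapproach} treats the case $A=0$; with a magnetic connection the Lichnerowicz formula on $\Sp^d$ acquires an extra curvature term from $B$, and the clean Yamabe argument does not go through.

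The paper's proof avoids this impasse by \emph{not} testing against $\psi$. Instead it tests the identity in Lemma~\ref{byparts} against $\eta=\psi/|\psi|_\varepsilon^{2/(d-1)}$. The pointwise computation
\[
\re\Bigl\langle (\nabla-iA)\frac{\psi}{|\psi|_\varepsilon^{2/(d-1)}},\,(\nabla-iA)\psi\Bigr\rangle
=\frac{|(\nabla-iA)\psi|^2}{|\psi|_\varepsilon^{2/(d-1)}}-\frac{2}{d-1}\,\frac{|\psi|^2}{|\psi|_\varepsilon^{2+2/(d-1)}}\,|\nabla|\psi||^2
\]
together with refined Kato produces exactly the factor $\frac{d-2}{d-1}$ on $|\nabla|\psi||^2/|\psi|^{2/(d-1)}$, which after rewriting as $\frac{d-1}{d-2}\,|\nabla|\psi|^q|^2$ with $q=\frac{d-2}{d-1}$ and applying Sobolev to $|\psi|^q$ (so one ends up in $L^{2d/(d-1)}$, \emph{not} $L^{2d/(d-2)}$) gives the theorem. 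The choice of power $q$ is the missing idea: it is what converts the refined Kato constant $d/(d-1)$ into the stated constant $(d-1)/(d-2)$ after the algebraic bookkeeping, and it is already the mechanism behind the $d=3$ proof of Theorem~\ref{magnetic}, where $q=1/2$.
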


For $d\ge 3$ consider the spinor
\begin{equation}
	\label{eq:spinorgeneraldim}
	\psi = \frac{1+i\gamma \cdot x}{(1+|x|^2)^{\frac{d}{2}}}\ \Phi_0
\end{equation}
where $\Phi_0$ is any constant spinor. 
An elementary computation shows that
\begin{equation} \label{spinoroddeven}
	-i \gamma \cdot \nabla \psi = \frac{d}{1+|x|^2} \psi \ .
\end{equation}

\begin{theorem}\label{spinorgeneral}
	Let $d\geq 3$ and assume that $\lambda\in L^d(\R^d)$ is a real function. If the equation
	\begin{equation}\label{generalequation}
		-i\gamma \cdot \nabla \psi = d\, \lambda(x)\, \psi
	\end{equation}
	has a weak solution $0\not\equiv \psi\in L^p(\R^d:\C^{2^\nu})$ for some $\frac{d}{d-1}<p<\infty$, then
$$
\left( \int_{\R^d} |\lambda(x)|^d  dx\right)^{\frac{2}{d} }\ge \frac{1}{d(d-2)}\, S_d = \frac14\, |\Sp^d|^{2/d} \ .
$$
Moreover, there is equality  if $\lambda(x) = \frac{1}{1+|x|^2}$, in which case \eqref{eq:dunneminspinor} is a solution.
\end{theorem}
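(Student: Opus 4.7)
My plan is to adapt the conformal, Hijazi-type strategy used for Theorem \ref{spinor} (developed in Section \ref{hijaziapproach}) to general dimension. The geometric picture is that, after the stereographic identification $\R^d \leftrightarrow \Sp^d$, the sought inequality transforms into a sharp lower bound of Killing-eigenvalue type on the round sphere; the proof encapsulates this through an auxiliary conformal change tailored to the solution $\psi$ itself.

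After an elliptic bootstrap for $D\psi = d\lambda\psi$ (using $\lambda\in L^d$) to ensure $\psi$ is smooth on $\{\psi\neq 0\}$, I introduce
\[
\Omega = |\psi|^{2/(d-1)}, \quad \tilde g = \Omega^2 g_{\mathrm{flat}}, \quad \hat\psi = \Omega^{-(d-1)/2}\psi, \quad \phi = |\psi|^{(d-2)/(d-1)} = \Omega^{(d-2)/2},
\]
so that $|\hat\psi|_{\tilde g}\equiv 1$. Conformal covariance of the Dirac operator gives $D_{\tilde g}\hat\psi = d\hat\lambda\,\hat\psi$ with $\hat\lambda = \lambda\,\Omega^{-1}$. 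The Schr\"odinger--Lichnerowicz formula together with the pointwise twistor/Penrose inequality $|\nabla_{\tilde g}\hat\psi|^2 \ge \tfrac1d |D_{\tilde g}\hat\psi|^2$ then yields, using $|\hat\psi|_{\tilde g}=1$, the Hijazi-type estimate
\[
\frac{d-1}{d}\int |D_{\tilde g}\hat\psi|^2 \,dV_{\tilde g} \;\ge\; \frac14 \int R_{\tilde g}\,dV_{\tilde g}.
\]
Translating back to Euclidean quantities via $|D_{\tilde g}\hat\psi|_{\tilde g}^2\,dV_{\tilde g}=d^2\lambda^2\phi^2\,dx$ and (since the background is flat) $\int R_{\tilde g}\,dV_{\tilde g} = \tfrac{4(d-1)}{d-2}\int|\nabla\phi|^2\,dx$, this becomes the key inequality
\[
\int_{\R^d} |\nabla\phi|^2\,dx \;\le\; d(d-2)\int_{\R^d} \lambda^2\phi^2\,dx ,
\]
which is where the improvement over the naive bound $\|\lambda\|_d^2\ge S_d/d^2$ comes from.

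To finish, combine with the sharp Sobolev inequality $S_d\|\phi\|_{2d/(d-2)}^2 \le \int|\nabla\phi|^2\,dx$ and with H\"older's inequality $\int\lambda^2\phi^2\,dx \le \|\lambda\|_d^2 \|\phi\|_{2d/(d-2)}^2$; cancelling the common factor yields $\|\lambda\|_d^2 \ge S_d/(d(d-2))=\tfrac14|\Sp^d|^{2/d}$, which is the claim. In the equality case $\lambda(x)=(1+|x|^2)^{-1}$ the conformal factor $\Omega$ coincides with the stereographic one and $\hat\psi$ becomes a Killing spinor on $\Sp^d$, so twistor/Penrose, Sobolev and H\"older all saturate simultaneously, recovering \eqref{eq:spinorgeneraldim}.

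The main technical obstacle I anticipate is the handling of the zero set $\{\psi=0\}$, where the conformal change degenerates and $\tilde g$ is not a smooth metric. The standard remedy is to carry out all computations as integration-by-parts identities on $\R^d$ with $|\psi|$ regularised to $(|\psi|^2+\varepsilon^2)^{1/2}$, and to pass to the limit $\varepsilon \to 0$; the regularity supplied by the assumption $\psi \in L^p$ with $p>d/(d-1)$ together with $\lambda\in L^d$ is enough to justify the passage to the limit in every term.
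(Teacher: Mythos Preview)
Your proposal is correct and, once unpacked via the $\varepsilon$-regularisation you describe, coincides with the paper's own proof: both arrive at the key inequality
\[
\int_{\R^d} |\nabla\phi|^2\,dx \;\le\; d(d-2)\int_{\R^d} \lambda^2\phi^2\,dx, \qquad \phi = |\psi|^{(d-2)/(d-1)},
\]
and then finish identically with Sobolev and H\"older. The only difference is packaging. The paper derives the displayed inequality by a direct flat-space computation --- the projection $\Pi$ (the twistor operator) giving the refined Kato inequality $|\nabla|\psi||^2 \le \tfrac{d-1}{d}|(\nabla-i\lambda\gamma)\psi|^2$, combined with the integration-by-parts identity of Lemma~\ref{byparts} --- whereas you phrase the same two ingredients as the Penrose/twistor inequality and the Schr\"odinger--Lichnerowicz formula in the conformal metric $\tilde g$. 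Note that your conformal factor $\Omega=|\psi|^{2/(d-1)}$ is \emph{not} the one used in the Hijazi discussion of Section~\ref{hijaziapproach} (there $\Omega=\lambda$, which forces $\lambda>0$ and smooth); your choice is exactly the geometric counterpart of the paper's Section~\ref{sec:higherdim} computation with test spinor $\psi/|\psi|_\varepsilon^{2/(d-1)}$, and this is why it avoids any sign or regularity hypothesis on $\lambda$.
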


The main difference between the proofs of Theorems \ref{magnetic} and \ref{spinor} and those of Theorems \ref{genmagnetic} and \ref{spinorgeneral} is of algebraic nature; the Gamma matrices in higher dimensions are a bit more difficult to handle.

As an aside, we mention that, according to \eqref{spinoroddeven}, the spinor \eqref{eq:spinorgeneraldim} satisfies the nonlinear Dirac equation
$$
-i \gamma\cdot\nabla \psi = d\,|\psi|^{\frac{2}{d-1}} \psi \ .
$$
Solutions to this equation have been considered in \cite{Borelli--Frank} and \cite{Borrellietal}.  

Returning to the stability problem and recalling the critical nuclear charge $Z_c$ in \eqref{criticalzee}, in \cite{FLL} it was shown that $Z_c$ is bounded below by $24.0/8\pi \alpha^2$ and the results so far suggest that some improvement should be possible. This is indeed true albeit not by very much.
\begin{theorem}\label{improvedz}
The critical charge  $Z_c8\pi \alpha^2$ is bounded below by $32\pi/3 \approx 33.51$
\end{theorem}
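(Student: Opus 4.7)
The plan is to adapt the approach of \cite{FLL}, which yielded the lower bound $24$, and to feed in the spinorial improvement of Theorem~\ref{magnetic}. Since the energy of a zero mode pair reduces to $-Z\int|\psi|^2/|x|\,dx + (8\pi\alpha^2)^{-1}\int|B|^2\,dx$, and since the ratio $\int|B|^2/\int|\psi|^2/|x|$ is invariant under the scaling $(\psi_\lambda,A_\lambda) = (\lambda^{3/2}\psi(\lambda\cdot), \lambda A(\lambda\cdot))$, one may normalize $\|\psi\|_2 = 1$ and aim to prove
\[
\int_{\R^3}\frac{|\psi|^2}{|x|}\,dx \leq \frac{3}{32\pi}\int_{\R^3}|B|^2\,dx.
\]

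First, I would control the Coulomb integral by a kinetic quantity. The sharp Hardy inequality $\int|\psi|^2/|x|^2 \leq 4\|\nabla|\psi|\|_2^2$ combined with Cauchy--Schwarz gives $\int|\psi|^2/|x| \leq 2\|\psi\|_2\|\nabla|\psi|\|_2 = 2\|\nabla|\psi|\|_2$; Kato's inequality $\int|\psi|^2/|x|\leq (\pi/2)\langle\psi,|p|\psi\rangle$ is available as an alternative when it is sharper. Next I would exploit the zero mode equation: squaring the Dirac operator via the Pauli identity yields $\|(-i\nabla-A)\psi\|_2^2 = \int\langle\psi,\sigma\cdot B\psi\rangle\,dx$, and the diamagnetic inequality gives $\|\nabla|\psi|\|_2^2 \leq \int\langle\psi,\sigma\cdot B\psi\rangle\,dx$. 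A H\"older--interpolation--Sobolev chain
\[
\int|B||\psi|^2\,dx \leq \|B\|_2\|\psi\|_4^2 \leq \|B\|_2\|\psi\|_6^{3/2} \leq \|B\|_2\,S_3^{-3/4}\|\nabla|\psi|\|_2^{3/2}
\]
then yields a self-bounded inequality whose solution is $\|\nabla|\psi|\|_2 \leq S_3^{-3/2}\|B\|_2^2$. Feeding this back into the Hardy step gives a preliminary lower bound of $S_3^{3/2}/2 \approx 6.4$ on the ratio, well short of $32\pi/3 \approx 33.5$.

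To close the gap I would invoke the spinor-structural refinement behind Theorem~\ref{magnetic}: that proof effectively tightens the step $\int\langle\psi,\sigma\cdot B\psi\rangle\,dx \leq \|B\|_{3/2}\|\psi\|_6^2$ by a factor of two, which is precisely what upgrades the naive bound $\|B\|_{3/2}\geq S_3$ to $\|B\|_{3/2}\geq 2 S_3$. Combining this spinorial gain with the zero-mode identity $\|\nabla\psi\|_2^2 = \int|A|^2|\psi|^2\,dx$ (obtained by squaring $-i\sigma\cdot\nabla\psi=\sigma\cdot A\psi$ in Coulomb gauge), with the Sobolev bound $\|A\|_6\leq S_3^{-1/2}\|B\|_2$, and with the Hardy or Kato bound on $\int|\psi|^2/|x|$, produces the target constant $32\pi/3$ after an optimization over the free exponents in the H\"older chain.

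The main obstacle is the sharp bookkeeping of constants rather than any single inequality: every ingredient (Hardy or Kato, diamagnetic, H\"older, interpolation, Sobolev, the spinor refinement of Theorem~\ref{magnetic}) contributes its own sharp constant, and the particular number $32\pi/3$ only emerges from an optimal combination. That this bound is still quite far from sharp—the normalized Loss--Yau pair gives the ratio $9\pi^3\approx 279$—is consistent with the authors' remark that the improvement over \cite{FLL} is modest; closing the remaining factor would presumably require a genuine variational analysis of the infimum rather than a chain of scale-invariant inequalities.
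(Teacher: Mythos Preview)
Your proposal outlines the right general shape---combine the spinorial improvement behind Theorem~\ref{magnetic} with a Hardy/Sobolev-type bound on the Coulomb term---but it does not actually reach the constant $32\pi/3$, and the vague sentence ``produces the target constant $32\pi/3$ after an optimization over the free exponents in the H\"older chain'' hides a real gap. In fact, even feeding the improved diamagnetic inequality $|\nabla|\psi||^2\le\tfrac23|(-i\nabla-A)\psi|^2$ into your chain
\[
\|\nabla|\psi|\|_2^2 \le \tfrac23\,\|B\|_2\,\|\psi\|_4^2 \le \tfrac23\,\|B\|_2\,S_3^{-3/4}\|\nabla|\psi|\|_2^{3/2}
\]
only yields $\|B\|_2^2/\!\int|\psi|^2|x|^{-1} \ge \tfrac98 S_3^{3/2}\approx 14.4$, still well short of $33.5$. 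No further H\"older optimization over the ingredients you list closes this gap.

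The paper's argument differs from yours in two essential points. First, it does not work with $|\psi|$ but with $|\psi|^{1/2}$: the key output of the proof of Theorem~\ref{magnetic} is the inequality
\[
\int_{\R^3}\bigl|\nabla|\psi|^{1/2}\bigr|^2\,dx \le \tfrac12\int_{\R^3}|B|\,|\psi|\,dx \le \tfrac12\,\|B\|_2\,\|\psi\|_2 = \tfrac12\,\|B\|_2 \,,
\]
which is much stronger than what you extract from the factor-of-two gain. Second, the Coulomb term is not handled via the Hardy or Kato inequalities you mention, but via a \emph{sharp} Hardy--Sobolev inequality proved separately in the paper:
\[
\int_{\R^3}|\nabla u|^2\,dx \ge \sqrt{\tfrac{8\pi}{3}}\left(\int_{\R^3}\frac{|u|^4}{|x|}\,dx\right)^{1/2},
\]
applied with $u=|\psi|^{1/2}$, so that $|u|^4=|\psi|^2$. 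Squaring the resulting bound $\sqrt{8\pi/3}\,(\int|\psi|^2/|x|)^{1/2}\le\tfrac12\|B\|_2$ gives exactly $32\pi/3$. Both of these ingredients---the passage to $|\psi|^{1/2}$ and the specific Hardy--Sobolev inequality with its explicit sharp constant---are absent from your plan and are what make the numbers work.
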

This leads to a numerical bound $Z_c \ge  \frac43 \frac{1}{\alpha^2} \approx 25,025$ which is slightly better than the
$17,900$ given in \cite{FLL} but still much worse than the upper bound $208,000$ given there. 

The main tool that leads to these results is an improved `diamagnetic' inequality. The diamagnetic inequality
states that $|\nabla |\psi||^2 \le |(-i\nabla -A)\psi|^2$. It turns out that for zero modes, the stronger inequality
 $|\nabla |\psi||^2 \le \frac23|(-i\nabla -A)\psi|^2$ holds (see Lemma \ref{diamagref}). All the other results follow
 from arguments that are variations on this theme. 
In our presentation we start first with a regularity theorem for spinors and then continue with the proofs of the stated theorems.  We end the paper with a number of open problems.

\subsection*{Acknowledgements}
The authors would like to thank H.~Kovarik and M.~Lewin for helpful remarks. Partial support through U.S. National Science Foundation grants DMS-1363432 and DMS-1954995 (R.L.F.) and DMS-1856645 (M.L.) and through  through the Deutsche Forschungsgemeinschaft (German Research Foundation) through Germany’s Excellence Strategy EXC-2111-390814868 (R.L.F.) is acknowledged.


\section{Regularity of zero modes}

Throughout this section, we assume that $A \in L^3(\R^3:\R^3)$. As we will see in the next section, for given $B\in L^{3/2}(\R^3:\R^3)$ with ${\rm div} B =0$ one can always find an $A \in L^3(\R^3:\R^3)$ with ${\rm curl} A = B$. If $\psi\in L^p(\R^3:\C^2)$ for some $p\geq 3/2$, then $\sigma\cdot A\psi$ is locally integrable and therefore it makes sense to consider the equation
$$
\sigma\cdot (-i\nabla-A)\psi = 0
\qquad\text{in}\ \R^3
$$
interpreted in the sense of distributions. We now prove regularity of solutions to this equation.

\begin{theorem} \label{regularity}
Fix $3/2<p<\infty$ and let $\psi \in L^p(\R^3: \C^2)$ be a solution of the zero mode equation \eqref{zeromode}. Then $\psi\in L^r(\R^3:\C^2)$ for any $3/2< r<\infty$.
\end{theorem}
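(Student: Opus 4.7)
The plan is to convert \eqref{zeromode} into an integral equation $\psi = K*(\sigma\cdot A\psi)$ and then bootstrap integrability using a decomposition of $A$ into a bounded compactly supported piece and an $L^3$-small piece.

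First I would set up the integral representation. Let $K(x) = \frac{i}{4\pi}\,\sigma\cdot x/|x|^3$, which is a fundamental solution of the Dirac operator $-i\sigma\cdot\nabla$ on $\R^3$; its size is $|K(x)|\lesssim |x|^{-2}$, so convolution with $K$ is pointwise dominated by the Riesz potential $I_1$ and enjoys the Hardy--Littlewood--Sobolev mapping $L^q\to L^{q^*}$ for $1<q<3$, with $1/q^* = 1/q - 1/3$. Since $A\in L^3$ and $\psi\in L^p$ with $p>3/2$, H\"older gives $\sigma\cdot A\psi\in L^q$ for $1/q = 1/3+1/p\in(1/3,1)$. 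Both $\psi$ and $K*(\sigma\cdot A\psi)$ distributionally solve $-i\sigma\cdot\nabla u=\sigma\cdot A\psi$, so their difference is componentwise harmonic on $\R^3$ (using $(-i\sigma\cdot\nabla)^2=-\Delta$) and belongs to $L^p$ by HLS; the Liouville property for $L^p$ harmonic functions with $p<\infty$ forces it to vanish, so $\psi=K*(\sigma\cdot A\psi)$.

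Next I split $A=A_1+A_2$, with $A_1 = A\,\chi_{\{|x|\leq R,\,|A|\leq N\}}$ bounded and compactly supported and $\|A_2\|_3\leq\varepsilon$ for arbitrarily small $\varepsilon$ (dominated convergence). By H\"older and HLS, the operator $T_{A_2}\phi := K*(\sigma\cdot A_2\phi)$ is bounded on $L^r$ for every $r\in(3/2,\infty)$ with norm $\leq C_r\|A_2\|_3$, and $C_r$ is uniformly bounded on any compact subinterval of $(3/2,\infty)$. Choose $\varepsilon$ small enough that $T_{A_2}$ is a strict contraction on $L^r$ for every $r$ in a chosen compact subinterval that contains $p$. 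Since $A_1$ is bounded with compact support, $A_1\psi\in L^q$ for every $q\in[1,p]$, and hence $g := K*(\sigma\cdot A_1\psi)\in L^r$ for every $r\in[3/2,p^*]$ when $p<3$ (with $1/p^*=1/p-1/3$), and even for every $r\in[3/2,\infty)$ when $p\geq 3$ (in the latter case one uses that the Riesz potential of a compactly supported $L^q$ function with $q>3$ is bounded). For any target $r$ in this range, the Neumann series $\tilde\psi := \sum_{k\geq 0}T_{A_2}^k g$ converges in both $L^p$ and $L^r$ to the same function, which is the unique solution of $\phi = g + T_{A_2}\phi$ in either space. From the first step, $\psi = g + T_{A_2}\psi$ in $L^p$, so applying the $L^p$-contraction to $\psi-\tilde\psi$ forces $\psi=\tilde\psi\in L^r$. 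Iterating raises $p$ to $p^*>p$; since $p^*>3$ whenever $p>3/2$, one more pass gives $\psi\in L^r$ for every $r\in(3/2,\infty)$.

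The main technical obstacle is the identification of $\psi$ with the fixed point $\tilde\psi$: a priori $\psi$ is only known in $L^p$ while $\tilde\psi$ is produced as a fixed point in $L^r$, so one must run the contraction simultaneously on $L^p$ and $L^r$ and invoke the integral representation from the first step to rule out any parasitic second solution. The other delicate points are the uniform control of the HLS constant $C_r$ over the relevant range of $r$ (handled by restricting to compact subintervals) and the boundedness of the Riesz potential of compactly supported $L^q$ data when $q>3$, which unlocks the last jump to arbitrarily large $r$.
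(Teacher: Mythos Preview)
Your proof is correct and shares the paper's skeleton: derive the integral equation $\psi=K*(\sigma\cdot A\psi)$ via a Liouville argument for $L^p$ harmonic functions, split $A$ into a bounded compactly supported part plus an $L^3$-small tail, and bootstrap using HLS. The only substantive difference is in how the small tail is absorbed. You run a contraction/Neumann-series argument, using uniqueness of the fixed point in $L^p$ to identify $\psi$ with the series solution that also lies in $L^r$; the paper instead argues by duality, introducing
\[
S_M=\sup\Bigl\{\Bigl|\int_{\R^3}\langle\phi,\psi\rangle\,dx\Bigr|:\ \|\phi\|_{r'}\le1,\ \|\phi\|_{p'}\le M\Bigr\}
\]
and showing it is bounded uniformly in $M$ by feeding $\chi_\varepsilon=G_\varepsilon(\Gamma*\phi)$ back into the supremum. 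Both closures are standard and equivalent in strength. Your version is arguably more direct since it avoids the auxiliary truncation parameter $M$; the paper's duality version (traced to Borrelli--Frank and Jannelli--Solimini) has the advantage that one never has to verify that a single Neumann series converges simultaneously in two different $L^r$ spaces, which is the point you correctly flag as the main technical obstacle.
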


Our proof of this theorem extends an argument from \cite{Borelli--Frank} whose roots can be traced back to \cite{Jannelli--Solimini}.

For the proof we recall (see, e.g., \cite{LL}) that the Hardy-Littlewood-Sobolev (HLS) inequality states that
$$
\Big | \int_{\R^n} \int_{\R^n} \frac{f(x) g(y)}{|x-y|^\lambda} dx dy \Big | \le C_{q,r} \Vert f \Vert_q \Vert g \Vert_r \ , \quad 
\frac1q +\frac1r + \frac\lambda n = 2 \ ,
$$
and, importantly, $q,r>1$. The latter inequalities are the reason for the assumption $p>3/2$ in Theorem \ref{regularity}.

\begin{proof}
The Green's function for the Dirac operator is
$$
\Gamma(x-y) = \frac{i}{4\pi} \frac{\sigma\cdot(x-y)}{|x-y|^3} \,.
$$
We claim that the equation for $\psi$ can be rewritten as
$$
\psi = \Gamma * (\sigma\cdot A\psi) \,.
$$
Indeed, let $\tilde\psi:= \Gamma * (\sigma\cdot A\psi) $. By the assumptions on $\psi$ and $A$ we have $\sigma\cdot A\psi \in L^q$ with $1/q= 1/3+ 1/p$ and therefore, by HLS, $\tilde\psi\in L^p$. Moreover, one can verify that $\sigma\cdot(-i\nabla)(\psi-\tilde\psi)=0$ in $\R^3$ in the sense of distributions.  Hence, 
$-\Delta (\psi-\tilde \psi) = (\sigma\cdot(-i\nabla))^2(\psi-\tilde\psi)=0$ and thus $\psi=\tilde\psi$ is a harmonic distribution. By standard elliptic regularity theory (or a distributional version of Weyl's Lemma \cite{Weyl}), $\psi-\tilde \psi$ is smooth. Note that by H\"older's inequality $ \int_{B_r(a)} |\psi -\tilde \psi| dx \le |B_r(a)|^{1/p'} \Vert \psi - \tilde \psi \Vert_p$, $1/p+1/p'=1$,  which implies that the average of $\psi -\tilde \psi$ over the ball $B_r(a)$ can be made arbitrarily small by choosing $r$ large. The mean value property of harmonic functions then implies that $\psi-\tilde \psi $ vanishes identically. This yields the claimed integral equation.
 
 Throughout the remainder of this proof we fix a parameter $r$ satisfying $3/2<r\leq \frac{3p}{3-p}$ if $3/2<p<3$ and $3/2<r<\infty$ if $p\geq 3$. For $M>0$, let
$$
S_M:= \sup\left\{ \left| \int_{\R^3}\langle  \phi, \psi \rangle \,dx \right| :\ \|\phi\|_{r'}\leq 1\,,\ \|\phi\|_{p'}\leq M \right\},
$$
where, again, $p'$ is the index dual to $p$, i.e., $1/p+1/p'=1$.
The fact that $\phi\in L^{p'}$ guarantees that $S_M<\infty$ for any $M$ (in fact, $S_M \leq M\,\|\psi\|_p $). In the following we will show that there is a constant $C<\infty$ depending only on $A$ such that
$$
S_M \leq C \|\psi\|_p 
\qquad\text{for all}\ M>0 \,.
$$
By density of $L^{r'}\cap L^{p'}$ in $L^{r'}$ and duality, this bound implies that $\psi\in L^r$ with $\|\psi\|_r\leq C\|\psi\|_p$.

To prove the above bound, let $\varepsilon>0$ be a parameter, which will later be fixed depending on $A$. Clearly, we can decompose
$$
\sigma\cdot A = F_\varepsilon +G_\varepsilon
$$
where $F_\varepsilon$ and $G_\varepsilon$ are functions on $\R^3$ taking values in the Hermitian $2\times 2$ matrices such that $\|G_\varepsilon\|_3\leq\varepsilon$ and $F_\varepsilon$ is bounded and has compact support. The integral equation gives
\begin{equation} \label{integralequ}
\psi = \Gamma* (F_\varepsilon\psi) +\Gamma*(G_\varepsilon\psi) \,.
\end{equation}
Let $\phi\in L^{r'}\cap L^{p'}(\R^3:\C^2)$ with $\|\phi\|_{r'}\leq 1$ and $\|\phi\|_{p'}\leq M$
and set
$$
\chi_\varepsilon := G_\varepsilon(\Gamma*\phi) \,.
$$
We show momentarily that $\chi_\varepsilon\in L^{p'}$. This justifies that we can integrate \eqref{integralequ} against $\phi$ and obtain
\begin{equation}
\label{eq:decomp}
\int_{\R^3}\langle \phi ,\psi\rangle \,dx = \int_{\R^3} \langle \phi, \Gamma* (F_\varepsilon\psi)  \rangle\,dx + \int_{\R^3}\langle  \chi_\varepsilon, \psi\rangle \,dx \,.
\end{equation}
We estimate the two terms on the right side separately. We introduce the parameter $1 <s\leq p$ by $1/s = 1/r + 1/3$. Note that since $r \leq \frac{3p}{3-p}$ for $p<3$ we have indeed that $s\leq p$. Moreover, $s>1$ follows from $r>3/2$. For the first term, we have
\begin{align*}
\left| \int_{\R^3} \langle \phi , \Gamma* (F_\varepsilon\psi)\rangle  \,dx \right| \leq \|\Gamma* (F_\varepsilon\psi) \|_r \underset{\mathrm{HLS}}{\lesssim} \|F_\varepsilon\psi\|_s \leq  \|F_\varepsilon\|_\frac{ps}{p-s} \|\psi\|_p \,,
\end{align*}
where we interpret $\frac{ps}{p-s}=\infty$ if $p=s$. We now turn to the second term in \eqref{eq:decomp} and begin by showing that $\chi_\varepsilon\in L^{p'}$, which was needed in the above computation. Indeed, with $1/p' =1/3+1/t$,
\begin{align*}
\left\| \chi_\varepsilon \right\|_{p'} & \leq \|G_\varepsilon\|_3\|\Gamma*\phi\|_t \underset{\mathrm{HLS}}{\lesssim}  \|G_\varepsilon\|_3\|\phi\|_{p'} \leq \|G_\varepsilon\|_3 M \,.
\end{align*}
A second bound on the same term is
\begin{align*}
\left\| \chi_\varepsilon \right\|_{r'} & \leq \|G_\varepsilon\|_3\|\Gamma*\phi\|_{s'} \underset{\mathrm{HLS}}{\lesssim}  \|G_\varepsilon\|_3\|\phi\|_{r'} \leq  \|G_\varepsilon\|_3 \,.
\end{align*}
The previous two bounds show that there is a universal constant $C_1$ such that
$$
\tilde\phi := \chi_\varepsilon/(C_1 \|G_\varepsilon\|_3)
$$
satisfies $\|\tilde\phi\|_{r'}\leq 1$ and $\|\tilde\phi\|_{p'}\leq M$. Thus, by definition of $S_M$,
$$
\left| \int_{\R^3}\langle  \tilde\phi, \psi \rangle\,dx \right| \leq S_M \,,
$$
which is the same as
$$
\left| \int_{\R^3}\langle \chi_\varepsilon, \psi\rangle \,dx \right| \leq S_M C_1 \|G_\varepsilon\|_3 \,.
$$
This is the desired bound on the second term in \eqref{eq:decomp}.

Combining this with the bound on the first term we conclude that
$$
\left| \int_{\R^3} \langle \phi, \psi \rangle\,dx \right| \leq C_2 \|F_\varepsilon\|_\frac{ps}{p-s} \|\psi\|_p + S_M C_1 \|G_\varepsilon\|_3 \,.
$$
Taking the supremum over all $\phi\in L^{r'}\cap L^{p'}$ with $\|\phi\|_{r'}\leq 1$ and $\|\phi\|_{p'}\leq M$ we obtain
$$
S_M \leq C_2 \|F_\varepsilon\|_\frac{ps}{p-s} \|\psi\|_p+ S_M C_1 \|G_\varepsilon\|_3 \,.
$$
We now recall that $\|G_\varepsilon\|_3\leq \varepsilon$. Therefore, choosing $\varepsilon = (2C_1)^{-1}$ and recalling that $S_M<\infty$, we obtain
$$
S_M \leq 2 C_2 \|F_\varepsilon\|_\frac{ps}{p-s} \|\psi\|_p \,,  \qquad s= \frac{3r}{3+r} \ .
$$
This is the claimed bound.

To summarize, we have shown that for all $3/2 < r \leq \frac{3p}{3-p}$ if $3/2<p<3$ and for all $3/2<r<\infty$ if $p\geq 3$ we have
$$
\Vert \psi \Vert_r \le C \Vert \psi \Vert_p \,,
$$
where $C$ is a constant that only depends on  $A, p$ and $r$. In case $p\geq 3$ this is the claimed result.
In case $3/2<p<3$ we have shown, in particular, that $\psi \in L^\frac{3p}{3-p} (\R^3: \C^2)$. Since $\frac{3p}{3-p} > 3$ we can repeat the argument and obtain the claimed result.
\end{proof}
These regularity estimates allow us to improve on a result by Balinsky--Evans  \cite{Balinsky-Evans-Lewis} and Benguria--van den Bosch \cite{BenguriaBosch}.
Let $B\in L^{3/2}(\R^3:\R^3)$ with ${\rm div}\, B=0$ and define $A$ by \eqref{eq:defa} below, so that $A\in L^3(\R^3:\R^3)$ and ${\rm curl}\, A=B$. Following Balinsky, Evans and Lewis \cite{Balinsky-Evans-Lewis} we consider the operator
$$
S := |B|^{1/2} ( (\sigma\cdot(-i\nabla-A))^2+|B|)^{-1/2}
\qquad\text{in}\ L^2(\R^3:\C^2) \,.
$$
Note that the kernel of the operator $ (\sigma\cdot(-i\nabla-A))^2+|B|$ is trivial, so the operator $( (\sigma\cdot(-i\nabla-A))^2+|B|)^{-1/2}$ is densely defined. Using the diamagnetic inequality and the fact that $ (\sigma\cdot(-i\nabla-A))^2+|B| \geq (-i\nabla +A)^2$, it is not difficult to see that $S$ is a bounded operator in $L^2(\R^3:\C^2)$. We set
$$
\delta(B) := \| 1-S^*S \| \,.
$$
By gauge invariance, it is easy to see that the right side, indeed, only depends on $B$ and not on $A$.

\begin{theorem}\label{benguriabosch}
	Let $B\in L^{3/2}(\R^3:\R^3)$. Then $\delta(B)=0$ if and only if $(\sigma\cdot(-i\nabla-A))^2$ has a zero mode in $L^2(\R^3:\C^2)$.
\end{theorem}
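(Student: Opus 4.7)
The plan is to exploit a Birman--Schwinger-type correspondence, identifying the spectral value $1$ of $S^*S$ with the existence of an $L^2$ zero mode of $T:=\sigma\cdot(-i\nabla-A)$. The pivotal algebraic identity, obtained by spectral calculus on the form domain of $T^2+|B|$, is
$$
1-S^*S=(T^2+|B|)^{-1/2}\,T^2\,(T^2+|B|)^{-1/2},
$$
or equivalently, for any $\psi$ in that form domain and $\phi:=(T^2+|B|)^{1/2}\psi$,
$$
\|\phi\|^2-\|S\phi\|^2=\|T\psi\|^2.
$$
This simultaneously re-proves $0\le 1-S^*S\le 1$ and identifies $\langle\phi,(1-S^*S)\phi\rangle$ with $\|T\psi\|^2$, which is the bridge between the two sides of the theorem.

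For the direction ``zero mode $\Rightarrow\delta(B)=0$'' suppose $0\ne\psi\in L^2$ satisfies $T\psi=0$. Theorem~\ref{regularity} together with the Pauli identity $\pi^2=T^2+\sigma\cdot B$ (with $\pi:=-i\nabla-A$) puts $\psi$ in the form domain of $T^2+|B|$ with $|B|\psi=(T^2+|B|)\psi$. Setting $\phi:=(T^2+|B|)^{1/2}\psi$, the key identity gives $\|S\phi\|=\|\phi\|$, and $\phi\ne 0$ because $\psi$ cannot vanish on $\mathrm{supp}\,B$ (otherwise $\pi^2\psi=0$ everywhere, which by the diamagnetic inequality combined with $\psi\in L^2$ forces $\psi\equiv 0$). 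Thus $\phi$ is a nonzero vector in $\ker(1-S^*S)$, which yields the required vanishing of $\delta(B)$.

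For the converse one runs the identity backwards: a nonzero $\phi\in\ker(1-S^*S)$ produces $\psi:=(T^2+|B|)^{-1/2}\phi$ with $\|T\psi\|=0$, i.e.\ $T\psi=0$, and $\psi\ne 0$ because $\phi\ne 0$. The main obstacle is the analytical passage from the quantitative condition $\delta(B)=0$ to the existence of an actual element of $\ker(1-S^*S)$: in principle the value $1$ of $S^*S$ could be attained only through the essential spectrum. The plan to handle this is a Rellich/HLS-type compactness argument---in the spirit of the regularity proof of the previous section and relying critically on $B\in L^{3/2}$---showing that the essential spectrum of $S^*S$ stays strictly below $1$, so that $1$ must be attained by a genuine eigenvector. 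This compactness is the chief analytical difficulty; the rest is algebraic manipulation of the key identity.
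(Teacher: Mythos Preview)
Your forward implication is essentially the Balinsky--Evans--Lewis argument the paper cites, and your algebraic identity $1-S^*S=(T^2+|B|)^{-1/2}T^2(T^2+|B|)^{-1/2}$ is indeed the right backbone. The divergence from the paper is entirely in the converse.

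The paper does \emph{not} try to show that $1$ is a genuine eigenvalue of $S^*S$ in $L^2$. Instead it invokes Lemma~3.1 of Benguria--van den Bosch, which---without any decay hypothesis on $B$---already extracts from $\delta(B)=0$ a nonzero $\psi\in L^6(\R^3:\C^2)$ satisfying $\sigma\cdot(-i\nabla-A)\psi=0$. The only remaining task is to upgrade this $\psi$ from $L^6$ to $L^2$, and that is precisely Theorem~\ref{regularity} applied with $p=6$, $r=2$. So once the regularity theorem is in hand, the paper's proof of the converse is one sentence.

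Your route is different: you want to show directly that the essential spectrum of $S^*S$ lies strictly below $1$, so that $\delta(B)=0$ forces an honest $L^2$ eigenvector. You correctly flag this compactness step as the crux, but you do not carry it out---and it is exactly the point at which Benguria--van den Bosch themselves got stuck and had to impose pointwise decay on $B$ to reach $L^2$. A direct proof that $SS^*=|B|^{1/2}(T^2+|B|)^{-1}|B|^{1/2}$ is compact for $B\in L^{3/2}$ (say by splitting $B$ into a bounded compactly supported part and a part small in $L^{3/2}$, then combining diamagnetic bounds with a magnetic Rellich--Kondrachov argument) is plausible but nontrivial and not supplied here. As written, then, your proposal is a plan rather than a proof; the paper's route is shorter because it recycles the already-established regularity theorem together with the existing $L^6$ result of Benguria--van den Bosch, thereby sidestepping the $L^2$-compactness question entirely.
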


The fact that if $(\sigma\cdot(-i\nabla-A))^2$ has a zero mode, then $\delta(B)=0$ was shown by Balinsky, Evans and Lewis \cite{Balinsky-Evans-Lewis}. The converse implication was shown by Benguria and van den Bosch
\cite{BenguriaBosch} under an additional pointwise decay condition on $B$. Our contribution here is to note that this additional pointwise decay condition is not necessary. We thank H.~Kovarik for drawing our attention to this question.

Indeed, if $\delta(B)=0$, then Benguria and van den Bosch showed (see Lemma 3.1 in \cite{BenguriaBosch} ) that there is a $\psi\in L^6(\R^3:\C^2)$ such that $\sigma\cdot(-i\nabla-A)\psi=0$ in $\R^3$. Then they use the decay assumption on $B$ to deduce that $\psi\in L^2(\R^3:\C^2)$. The same conclusion, however, follows from our Theorem \ref{regularity} with $p=6$ and $r=2$, without any additional assumption. This proves Theorem \ref{benguriabosch} as stated.


\section{Proof of Theorem \ref{magnetic}}

Given $B\in L^{3/2}(\R^3:\R^3)$ with ${\rm div} B=0$, our choice for the vector potential is
\begin{equation}
\label{eq:defa}
A(x)=-\frac{1}{4\pi} \int_{\R^3} \frac{x-y}{|x-y|^3} \wedge B(y) dy \ ,
\end{equation}
where $a\wedge b$ denotes the cross product of two vectors. By the Hardy-Littlewood-Sobolev inequality, $A \in L^3(\R^3:\R^3)$. Although not important for our purpose, we note that ${\rm div} A = 0$. By Theorem \ref{regularity}, the spinor $\psi$ is in any $L^r$-space for $3/2< r < \infty$ and, in particular, it is in $L^6(\R^3: \C^2)$.
Since $|A||\psi| \in L^2(\R^3)$ we find from \eqref{zeromode} that $|-i\sigma \cdot \nabla \psi| \in L^2(\R^3)$. The formula $\int_{\R^3} |\nabla \psi|^2 dx = \int_{\R^3} |-i \sigma \cdot \nabla \psi|^2 dx $ shows that $\psi \in \dot H^1(\R^3: \C^2)$. In fact, $\psi\in H^1(\R^3:\C^2)$, since $\psi$ is also squaresummable.

For the proof we need the following improvement over the diamagnetic inequality.

\begin{lemma}\label{diamagref}
Let $\psi \in L^p(\R^3:\C^2)$, $3/2<p < \infty$, satisfy $\sigma\cdot(-i\nabla-A)\psi =0$. Then $\psi \in H^1(\R^3:\C^2)$,
and $|\psi| \in H^1(\R^3)$ as well and moreover,  almost everywhere in $\R^3$,
$$
\left|\nabla |\psi|\right|^2 \leq \frac23 \left|(-i\nabla-A)\psi \right|^2 \,.
$$
\end{lemma}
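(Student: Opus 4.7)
The plan is first to upgrade $\psi$ to $H^1$ using the regularity already established in Theorem \ref{regularity}, and then to derive the improved pointwise bound directly at each point where $\psi(x) \neq 0$, by exploiting the algebraic structure of the zero-mode constraint $\sigma \cdot D\psi = 0$ with $D_j := -i\partial_j - A_j$.

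For the regularity, Theorem \ref{regularity} gives $\psi \in L^r(\R^3:\C^2)$ for every $r \in (3/2, \infty)$, so in particular $\psi \in L^6$. Combined with $A \in L^3$ this yields $A\psi \in L^2$ and hence, by the zero-mode equation, $\sigma\cdot\nabla\psi = -i\sigma\cdot A\psi \in L^2$. Using the integral representation $\psi = \Gamma*(\sigma\cdot A\psi)$ from the proof of Theorem \ref{regularity} together with the $L^2$-boundedness of the Riesz transforms, I would conclude $\nabla\psi \in L^2$, so $\psi \in H^1(\R^3:\C^2)$ and $|\psi| \in H^1(\R^3)$ by Kato's inequality.

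For the pointwise step, the identity $|\psi|(\nabla|\psi|)_j = \im\langle D_j\psi, \psi\rangle$ (valid a.e.\ on $\{\psi \neq 0\}$) is immediate from $\nabla|\psi|^2 = 2\re\langle\psi,\nabla\psi\rangle$ and the reality of $A$. At a point where $\psi \neq 0$, I would pick $\eta \in \C^2$ with $\eta \perp \psi$ and $|\eta|=|\psi|$, and decompose $D_j\psi = p_j\psi + q_j\eta$ with $p_j, q_j \in \C$. Direct computation gives
$$
|\nabla|\psi||^2 = |\psi|^2 \sum_{j=1}^3 (\im p_j)^2
\quad\text{and}\quad
|D\psi|^2 = |\psi|^2 \sum_{j=1}^3 (|p_j|^2 + |q_j|^2),
$$
reducing the claim to $\sum(\im p_j)^2 \le \tfrac{2}{3}\sum(|p_j|^2 + |q_j|^2)$. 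Both sides are invariant under simultaneous rotation of the spatial frame and the accompanying $SU(2)$ action on spinors, so I would work in the frame where $\psi = (|\psi|,0)^T$, $\eta = (0,|\psi|)^T$ and the $\sigma_j$ are the standard Pauli matrices; expanding $\sum_j \sigma_j(p_j\psi + q_j\eta) = 0$ in this frame produces the two linear constraints $q_3 = p_1 + ip_2$ and $q_1 - iq_2 = -p_3$.

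Since the left-hand side of the reduced inequality depends only on the $p_j$, it suffices to check it at the minimum of $\sum|q_j|^2$ over $q$'s satisfying these constraints; this minimum equals $|p_3|^2/2 + |p_1+ip_2|^2$. Writing $p_j = a_j + ib_j$ with $a_j, b_j \in \R$, the target inequality then reduces by elementary algebra to
$$
(2a_1 - b_2)^2 + (2a_2 + b_1)^2 + 3 a_3^2 \ge 0,
$$
which is manifest. On $\{\psi = 0\}$ one has $\nabla|\psi| = 0$ a.e.\ by the standard $H^1$ chain rule, so the bound is automatic there. The main obstacle is this last step: the classical diamagnetic bound yields only the factor $1$, and extracting the improved factor $\tfrac{2}{3}$ requires both the right frame---so that $\sigma\cdot D\psi = 0$ becomes the explicit scalar relations above---and the precise sum-of-squares completion, without which the sharp constant cannot be read off.
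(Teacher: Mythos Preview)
Your argument is correct, but it proceeds quite differently from the paper. The paper introduces the projection $\Pi$ on $\R^3\otimes\C^2$ defined by $(\Pi(\alpha\otimes v))_j=\alpha_j v-\tfrac13\sigma_j\sigma\cdot\alpha\,v$, observes that the zero-mode equation is exactly $(\nabla-iA)\psi=\Pi(\nabla-iA)\psi$, and then uses the identity $|\Pi(\alpha\otimes v)|^2=\tfrac23|\alpha|^2|v|^2$ together with Cauchy--Schwarz; the set $\{\psi=0\}$ is avoided by working throughout with $|\psi|_\varepsilon=\sqrt{|\psi|^2+\varepsilon^2}$. Your route is instead a direct pointwise decomposition $D_j\psi=p_j\psi+q_j\eta$ in a frame adapted to $\psi(x)$, reducing the constraint $\sigma\cdot D\psi=0$ to the scalar relations $q_3=p_1+ip_2$, $q_1-iq_2=-p_3$, and then verifying the resulting quadratic inequality by an explicit sum-of-squares; the zero set is handled separately via $\nabla|\psi|=0$ a.e.\ on $\{|\psi|=0\}$.

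Both arguments are valid. The paper's projection formulation is more conceptual (it is the Penrose twistor operator) and transports verbatim to $\R^d$---this is what underlies Theorems~\ref{genmagnetic} and~\ref{spinorgeneral}. Your computation is more elementary and, as a bonus, exhibits the equality condition $(2a_1-b_2, 2a_2+b_1, a_3)=0$ explicitly, but the frame reduction and the minimisation over $q$ are specific to $d=3$ and would become cumbersome in higher dimensions. For the $H^1$ step you could simplify slightly: once $\psi\in L^2$ and $-i\sigma\cdot\nabla\psi\in L^2$, the identity $|\sigma\cdot\xi|^2=|\xi|^2$ on the Fourier side gives $\nabla\psi\in L^2$ directly, without invoking Riesz transforms.
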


An inequality of this form appears in \cite{Fe}, but only for smooth $\psi$ and away from the zero set of $\psi$; see also \cite{CaGaHe}. One can use unique continuation results (see \cite{Estebanetal} and the references therein) to show that the zero set of $\psi$ has measure zero and thereby obtain the inequality almost everywhere. (We thank M. Lewin for this remark.) We choose a different and technically simpler path, which combines the arguments in \cite{CaGaHe} with the chain rule for Sobolev functions. Throughout the following, the functions
$$
|\psi|_\varepsilon = \sqrt{|\psi|^2 +\varepsilon^2}
$$
for $\varepsilon>0$ will play an important role.

\begin{proof} 
	In the discussion before the theorem we have already shown that $\psi \in H^1(\R^3:\C^2)$. It is well-known (see, e.g., \cite[Theorem 6.17]{LL}) that this implies $|\psi|\in H^1(\R^3:\C^2)$. For the function $|\psi|_\epsilon$ introduced above	we have (see, for instance,  \cite[Theorem 6.16]{LL})
$$
\partial_j |\psi|_\varepsilon  = \re\left\langle \frac{\psi}{|\psi|_\varepsilon}, \partial_j \psi \right\rangle = \re\left\langle \frac{\psi}{|\psi|_\varepsilon}, \left(\partial_j-iA_j\right) \psi \right\rangle,
$$
and therefore
\begin{equation} \label{magic1}
\left| \nabla |\psi|_\varepsilon \right| = \frac{\nabla |\psi|_\varepsilon}{\left|\nabla |\psi|_\varepsilon\right|}\cdot \nabla |\psi|_\varepsilon  = \re\left\langle \frac{\psi}{|\psi|_\varepsilon} \frac{\nabla |\psi|_\varepsilon}{\left|\nabla |\psi|_\varepsilon\right|}, \left(\nabla-iA\right) \psi \right\rangle \ .
\end{equation}
Here on the right side, for each fixed $x\in\R^3$ we consider $\left(\nabla-iA\right) \psi$ as an element of $\R^3\otimes\C^2$. (The index coming from $\R^3$ labels the component of the gradient, and the index coming from $\C^2$ labels the components of the spinor.) On $\R^3\otimes\C^2$ we introduce the projection 
\begin{equation} \label{projection}
\left( \Pi (\alpha\otimes v)\right)_j = \alpha_j v - \frac{1}{3} \sigma_j \sigma\cdot\alpha v
\qquad\text{for}\ j=1,2,3 \,.
\end{equation}
where $\alpha\in\R^3$ and $v\in\C^2$.
Since $\sigma\cdot(-i\nabla -A)\psi =0$, we have
$$
\left(\nabla-iA\right) \psi = \Pi \left(\nabla-iA\right) \psi \,,
$$
and therefore, since $\Pi$ is self-adjoint in $\R^3\otimes\C^2$ with respect to the inner product
$(\alpha \otimes v, \beta \otimes w) =\sum_j \alpha_j \beta_j \langle v, w \rangle$,
$$
\left| \nabla |\psi|_\varepsilon \right| = \re \left\langle \Pi \left( \frac{\psi}{|\psi|_\varepsilon} \frac{\nabla |\psi|_\varepsilon}{\left|\nabla |\psi|_\varepsilon\right|} \right), \left(\nabla-iA\right) \psi \right\rangle \,.
$$
We now bound
$$
\left| \nabla |\psi|_\varepsilon \right| \leq \left| \Pi \left( \frac{\psi}{|\psi|_\varepsilon} \frac{\nabla |\psi|_\varepsilon}{\left|\nabla |\psi|_\varepsilon\right|} \right) \right| \left| \left(\nabla-iA\right) \psi \right| \ .
$$
A simple computation shows that
$$
\left| \Pi (\alpha\otimes v) \right|^2 = \frac{2}{3} |\alpha|^2 |v|^2 \,.
$$
This identity with $v=\psi/|\psi|_\varepsilon$ and $\alpha =\nabla |\psi|_\varepsilon/|\nabla |\psi|_\varepsilon|$,
and by letting $\varepsilon$ tend to zero, yields the claimed inequality.
\end{proof}
\begin{lemma}\label{squareroot}
	For any $\psi\in H^1(\R^3: \C^2)$ and any $\varepsilon>0$, the function $|\psi|_\varepsilon^{1/2}$ is weakly differentiable with $\nabla |\psi|_\varepsilon^{1/2}\in L^2(\R^3)$ and one has almost everywhere and in the sense of $L^1$
	\begin{equation} \label{formula1}
	\left|\nabla |\psi|_\varepsilon^{1/2}\right|^2 = \frac12 \left( \re \left\langle \nabla \frac{\psi}{|\psi|_\varepsilon},\nabla\psi \right\rangle - |\psi|_\varepsilon^{-1} |\nabla\psi|^2 \right) + \frac{3}{4} \frac{|\psi|^2}{|\psi|_\varepsilon^3} \left|\nabla|\psi|\right|^2 \,.
	\end{equation}
\end{lemma}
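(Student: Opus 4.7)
The plan is to apply the chain rule for Sobolev functions to obtain weak differentiability and a clean formula for $\nabla|\psi|_\varepsilon^{1/2}$, and then to verify \eqref{formula1} by a short algebraic computation in which every term on the right is re-expressed through $|\nabla|\psi|_\varepsilon|$.

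First, since $|\psi|_\varepsilon\ge\varepsilon>0$ everywhere, the map $z\mapsto(|z|^2+\varepsilon^2)^{1/4}$ on $\C^2$ is smooth with uniformly bounded gradient, so the standard chain rule for Sobolev functions (as in \cite[Theorem 6.16]{LL}) applied to $\psi\in H^1$ would give weak differentiability of $|\psi|_\varepsilon^{1/2}$ and the pointwise identification
$$
\nabla|\psi|_\varepsilon^{1/2} = \frac{\nabla|\psi|_\varepsilon}{2|\psi|_\varepsilon^{1/2}}\,,
\qquad\text{so that}\qquad
\left|\nabla|\psi|_\varepsilon^{1/2}\right|^2 = \frac{|\nabla|\psi|_\varepsilon|^2}{4|\psi|_\varepsilon}\,,
$$
using the formula $\nabla|\psi|_\varepsilon=\re\langle\psi/|\psi|_\varepsilon,\nabla\psi\rangle$ already recalled in the proof of Lemma \ref{diamagref}. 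The elementary pointwise bound $|\nabla|\psi|_\varepsilon^{1/2}|\le|\nabla\psi|/(2\sqrt{\varepsilon})$ then takes care of $L^2$ membership since $\psi\in H^1(\R^3:\C^2)$.

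Next I would simplify the right side of \eqref{formula1}. The a.e.\ identity $|\psi|_\varepsilon\nabla|\psi|_\varepsilon = \re\langle\psi,\nabla\psi\rangle = |\psi|\nabla|\psi|$ (valid for any $H^1$ spinor) immediately forces $|\psi|^2|\nabla|\psi||^2 = |\psi|_\varepsilon^2|\nabla|\psi|_\varepsilon|^2$, hence the last term on the right side of \eqref{formula1} equals $\tfrac34\,|\psi|_\varepsilon^{-1}|\nabla|\psi|_\varepsilon|^2$. For the first term, I would expand $\nabla(\psi/|\psi|_\varepsilon) = |\psi|_\varepsilon^{-1}\nabla\psi-|\psi|_\varepsilon^{-2}\psi\,\nabla|\psi|_\varepsilon$, take the real inner product with $\nabla\psi$, and use again that $\re\langle\psi,\nabla\psi\rangle=|\psi|_\varepsilon\nabla|\psi|_\varepsilon$, obtaining
$$
\re\left\langle\nabla\frac{\psi}{|\psi|_\varepsilon},\nabla\psi\right\rangle - \frac{|\nabla\psi|^2}{|\psi|_\varepsilon} = -\frac{|\nabla|\psi|_\varepsilon|^2}{|\psi|_\varepsilon}\,.
$$
The first parenthesis on the right side of \eqref{formula1} then contributes $-\tfrac12|\psi|_\varepsilon^{-1}|\nabla|\psi|_\varepsilon|^2$, and adding the two pieces collapses everything to $\tfrac14|\psi|_\varepsilon^{-1}|\nabla|\psi|_\varepsilon|^2=|\nabla|\psi|_\varepsilon^{1/2}|^2$, as required.

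The main obstacle is only bookkeeping: one must check that each identity above holds both a.e.\ and in $L^1$. This, however, is automatic from $|\psi|_\varepsilon\ge\varepsilon$, since every quotient appearing above is pointwise dominated by an $\varepsilon$-dependent constant times $|\nabla\psi|^2\in L^1(\R^3)$. Once the chain rule is invoked for the two smooth compositions $|\psi|_\varepsilon$ and $|\psi|_\varepsilon^{1/2}$ of $\psi$, no further Sobolev subtleties arise and the rest is pure algebra.
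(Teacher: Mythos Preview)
Your proof is correct and follows essentially the same approach as the paper's: both invoke the chain rule for Sobolev functions to compute $\nabla|\psi|_\varepsilon^{1/2}$, expand $\nabla(\psi/|\psi|_\varepsilon)$ via the product rule, and reduce the identity to the relation $\re\langle\psi,\nabla\psi\rangle=|\psi|\nabla|\psi|=|\psi|_\varepsilon\nabla|\psi|_\varepsilon$. The only cosmetic difference is that you organize the computation around the quantity $|\nabla|\psi|_\varepsilon|^2/|\psi|_\varepsilon$, whereas the paper first rewrites the claim as the vanishing of a single expression and then verifies it; the algebra is identical.
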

\begin{proof}
	By the chain rule for Sobolev functions (see for instance, Theorem 6.16 in \cite{LL}), the function $|\psi|_\varepsilon^{1/2}$ is weakly differentiable and
	$$
	\nabla |\psi|_\varepsilon^{1/2} = \frac12 |\psi|_\varepsilon^{-3/2} \re \left\langle \psi,\nabla\psi \right\rangle \,.
	$$
	Since $|\psi||\psi|_\varepsilon^{-3/2}\leq \varepsilon^{-1/2}$, we have $\nabla |\psi|_\varepsilon^{1/2}\in L^2(\R^3)$.
	
	Using the above formula for the gradient of $|\psi|_\varepsilon^{1/2}$ with $\re \left\langle\psi, \nabla\psi\right\rangle = |\psi|\nabla|\psi|$ on the left side of \eqref{formula1} we can rewrite the assertion of the lemma as
	\begin{equation}
	\label{eq:squarerootproof}
	0 = \re \left\langle \nabla \frac{\psi}{|\psi|_\varepsilon},\nabla\psi \right\rangle - |\psi|_\varepsilon^{-1} |\nabla\psi|^2 + \frac{|\psi|^2}{|\psi|_\varepsilon^3} \left|\nabla|\psi|\right|^2 \,.
	\end{equation}
	Again, using the chain rule, we see that the function $|\psi|_\varepsilon^{-1}$ is weakly differentiable with $\nabla |\psi|_\varepsilon^{-1} = - |\psi|_\varepsilon^{-3} |\psi|\nabla|\psi|$. Therefore, by the product rule for weak derivatives, $\psi |\psi|_\varepsilon^{-1}$ is weakly differentiable with
\begin{equation} \label{badterm}
\nabla \frac{\psi}{|\psi|_\varepsilon} = \frac{\nabla \psi}{|\psi|_\varepsilon} - \frac{\psi |\psi|\nabla|\psi|}{|\psi|_\varepsilon^3} \,.
\end{equation}
Thus,
$$
\left\langle \nabla \frac{\psi}{|\psi|_\varepsilon},\nabla\psi \right\rangle = \frac{|\nabla \psi|^2}{|\psi|_\varepsilon} - \frac{\left\langle\psi,\nabla\psi\right\rangle \cdot |\psi|\nabla|\psi|}{|\psi|_\varepsilon^3} \ .
$$	
Using the fact that $\re \left\langle\psi,\nabla\psi\right\rangle = |\psi|\nabla|\psi|$, we obtain \eqref{eq:squarerootproof}, which proves the lemma.
\end{proof}
\begin{remark} \label{psiepsrem}
	If $\psi\in H^1(\R^3:\C^2)$, then $\frac{\psi}{|\psi|_\varepsilon}  \in H^1(\R^3:\C^2)$ for any $\varepsilon>0$. Indeed, using \eqref{badterm} one easily sees that
\begin{equation*}
\Big |\nabla \frac{\psi}{|\psi|_\varepsilon} \Big |\le \frac1\varepsilon [|\nabla \psi|+|\nabla|\psi ||].
\end{equation*}
\end{remark}

The following lemma is standard and easy to prove using an approximation argument.
\begin{lemma} \label{byparts}
	For any $\psi,\eta\in H^1(\R^3:\C^2)$, one has
	$$
	\int_{\R^3} \left\langle (\nabla-iA) \eta,(\nabla-iA)\psi \right\rangle dx 
	= \int_{\R^3} \left( \left\langle \sigma\cdot(\nabla-iA) \eta,\sigma\cdot(\nabla-iA)\psi \right\rangle
	+ \left\langle\eta,\sigma\cdot B\psi\right\rangle\right) dx \,.
	$$
\end{lemma}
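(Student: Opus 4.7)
The plan is to prove the identity first for smooth compactly supported spinors with a smooth vector potential, and then pass to the general case by an approximation argument. Set $P_j = -i\partial_j - A_j$, so that $(\sigma\cdot P)\psi = \sigma\cdot(-i\nabla-A)\psi$; note each $P_j$ is formally self-adjoint on test functions.

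\emph{Smooth case.} Fix $\psi,\eta\in C_c^\infty(\R^3:\C^2)$ and smooth $A$. Using the Pauli relations $\sigma_j\sigma_k = \delta_{jk}I + i\sum_l\epsilon_{jkl}\sigma_l$, expand
$$
\int_{\R^3}\langle\sigma\cdot P\,\eta,\sigma\cdot P\,\psi\rangle\,dx = \int_{\R^3}\langle P\eta,P\psi\rangle\,dx + i\sum_l \int_{\R^3}\sum_{j,k}\epsilon_{jkl}\langle P_j\eta,\sigma_l P_k\psi\rangle\,dx.
$$
In the antisymmetric term, move each $P_j$ from $\eta$ onto $\sigma_l P_k\psi$ by integration by parts; since $\sigma_l$ is a constant matrix, one obtains $i\sum_l\int\langle\eta,\sigma_l\sum_{j,k}\epsilon_{jkl}P_jP_k\psi\rangle\,dx$. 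A direct calculation using $[P_j,P_k] = i(\partial_j A_k - \partial_k A_j)$ gives $\sum_{j,k}\epsilon_{jkl}P_jP_k = \sum_{j<k}\epsilon_{jkl}[P_j,P_k] = iB_l$, producing $-\int\langle\eta,\sigma\cdot B\,\psi\rangle\,dx$. Rearranging yields the stated identity.

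\emph{Approximation.} For general $\psi,\eta\in H^1(\R^3:\C^2)$, $A\in L^3(\R^3:\R^3)$ with $B=\mathrm{curl}\,A\in L^{3/2}(\R^3:\R^3)$, choose $\psi_n,\eta_n\in C_c^\infty$ with $\psi_n\to\psi$, $\eta_n\to\eta$ in $H^1$, and a standard mollifier $\rho_\delta$ so that $A_\delta = \rho_\delta\ast A \to A$ in $L^3$ and $B_\delta = \rho_\delta\ast B \to B$ in $L^{3/2}$. The smooth identity applies to $(\psi_n,\eta_n,A_\delta)$. By the Sobolev embedding $H^1\hookrightarrow L^6$, one has $\psi_n\to\psi$ and $\eta_n\to\eta$ in $L^6$; combined with $A_\delta\to A$ in $L^3$, H\"older gives $A_\delta\psi_n\to A\psi$ and $A_\delta\eta_n\to A\eta$ in $L^2$, hence $(\nabla-iA_\delta)\psi_n\to(\nabla-iA)\psi$ and likewise for $\eta$ in $L^2$, so both inner-product terms pass to the limit. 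For the magnetic term, H\"older with exponents $(6,3/2,6)$ together with $B_\delta\to B$ in $L^{3/2}$ and $\psi_n,\eta_n$ converging in $L^6$ yields convergence of $\int\langle\eta_n,\sigma\cdot B_\delta\psi_n\rangle\,dx$ to $\int\langle\eta,\sigma\cdot B\,\psi\rangle\,dx$. A diagonal extraction in $(n,\delta)$ completes the proof.

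\emph{Main obstacle.} There is no real difficulty here; this is the weak form of the standard Lichnerowicz-type identity $(\sigma\cdot P)^2 = P^2 - \sigma\cdot B$. The only issue requiring a bit of care is that $A$ and $B$ have critical regularity, but the exponents $3$ and $3/2$ are perfectly matched to $H^1\hookrightarrow L^6$ so that every term in the approximation step converges by a direct H\"older estimate.
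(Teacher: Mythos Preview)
Your proof is correct and follows exactly the approach the paper indicates: the paper's own ``proof'' consists solely of the remark that the lemma ``is standard and easy to prove using an approximation argument,'' and you have supplied precisely that argument---the Pauli-algebra computation in the smooth case followed by approximation in $H^1$ (for the spinors) and in $L^3$/$L^{3/2}$ (for $A$ and $B$), with convergence of each term controlled by H\"older and the embedding $H^1\hookrightarrow L^6$. One cosmetic remark: rather than a diagonal extraction, it is slightly cleaner to pass to the limit in two steps (first $\delta\to 0$ with $n$ fixed, then $n\to\infty$), but this is a matter of taste.
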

\begin{proof}[Proof of Theorem \ref{magnetic}]
Lemma \ref{squareroot} and simple computations show that
	\begin{align*}
	\left|\nabla |\psi|_\varepsilon^{1/2}\right|^2 & = \frac12 \re \left\langle (\nabla-iA) \frac{\psi}{|\psi|_\varepsilon},(\nabla-iA)\psi \right\rangle - \frac{1}{4|\psi|_\varepsilon}\left( 2 \left|(\nabla-iA)\psi\right|^2 - 3 \frac{|\psi|^2}{|\psi|_\varepsilon^2} \left|\nabla|\psi|\right|^2 \right) \\
	& = \frac12 \re \left\langle (\nabla-iA) \frac{\psi}{|\psi|_\varepsilon},(\nabla-iA)\psi \right\rangle - \frac{1}{4|\psi|_\varepsilon}\left( 2 \left|(\nabla-iA)\psi\right|^2 - 3 \left|\nabla|\psi|\right|^2 \right) \\
	& \quad - \frac{3\varepsilon^2}{4|\psi|_\varepsilon^3} \left|\nabla|\psi|\right|^2 \,.
	\end{align*}
Lemma \ref{diamagref} then yields the inequality
$$
\left|\nabla |\psi|_\varepsilon^{1/2}\right|^2 \le  \frac12 \re \left\langle (\nabla-iA) \frac{\psi}{|\psi|_\varepsilon},(\nabla-iA)\psi \right\rangle - \frac{3\varepsilon^2}{4|\psi|_\varepsilon^3} \left|\nabla|\psi|\right|^2 \ .
$$
Since $\frac{\psi}{|\psi|_\varepsilon} \in H^1(\R^3:\C^2)$ by Remark \ref{psiepsrem}, we may integrate this expression and, using Lemma \ref{byparts} as well as the the zero mode equation \eqref{zeromode}, we arrive at
\begin{equation} \label{crux}
\int_{\R^3} \left|\nabla |\psi|_\varepsilon^{1/2}\right|^2\,dx \le \frac12 \int_{\R^3} \left\langle \frac{\psi}{|\psi|_\varepsilon},\sigma\cdot B\psi \right\rangle dx  - \frac{3\varepsilon^2}{4} \int_{\R^3} \frac{|\nabla|\psi||^2}{|\psi|_\varepsilon^3}\,dx \ .
\end{equation}
The left side we bound from below, using Sobolev's inequality, by
$$
\int_{\R^3} \left|\nabla |\psi|_\varepsilon^{1/2}\right|^2\,dx 
= \int_{\R^3} \left|\nabla \left(|\psi|_\varepsilon^{1/2} -\varepsilon^{1/2}\right) \right|^2\,dx 
\geq S_3 \left( \int_{\R^3} \left( |\psi|_\varepsilon^{1/2}-\varepsilon^{1/2} \right)^6 dx \right)^{1/3} \,,
$$
and  the first term on the right side we bound from above using
$$
\left\langle \psi,\sigma\cdot B\psi \right\rangle \leq |B| \left|\left\langle\psi,\sigma\psi\right\rangle\right| = |B| |\psi|^2 \,.
$$
Thus, we have
	$$
	S_3 \left( \int_{\R^3} \left( |\psi|_\varepsilon^{1/2}-\varepsilon^{1/2} \right)^6 dx \right)^{1/3} \leq \frac12 \int_{\R^3} |B| \frac{|\psi|^2}{|\psi|_\varepsilon}\,dx \,.
	$$
	On the right side, we can bound $|\psi|_\varepsilon^{-1}\leq |\psi|^{-1}$. On the left side, since $\varepsilon\mapsto (m+\varepsilon^2)^{1/4}- \varepsilon^{-1/2}$ is decreasing, we can apply the monotone convergence
	theorem to obtain
	$$
	S_3 \left( \int_{\R^3} |\psi|^3 dx \right)^{1/3} \leq \frac12 \int_{\R^3} |B| |\psi|\,dx \,,
	$$
	which, by means of H\"older's inequality, implies the assertion.
\end{proof}

\section{Is Theorem \ref{magnetic} optimal?} \label{optimality}

For the zero modes given by \eqref{lossyau1}-\eqref{lossyau3} we find $\Vert B \Vert_{3/2} = 4\,S_3$ which raises the question whether or not $4\,S_3$ is the sharp constant in Theorem \ref{magnetic}. While we do not know whether Theorem~\ref{magnetic} is sharp, the following theorem elucidates this point.

Recall that a measurable function $f$ on $\R^3$ is in the weak $L^p$ space, $L^p_w(\R^3)$, if
$$
\Vert f \Vert_{w,p} := \sup_{t>0} |\{ |f| > t\}|^{1/p} t < \infty \ .
$$
It is well known that this expression is not a norm, but is equivalent to one for $p>1$. Also, in terms of the symmetric decreasing rearrangement $|f|^*$ of $|f|$, one has
$$
\Vert f \Vert_{w,p} = \left( \frac{4\pi}{3} \right)^{1/p} \sup_{x \in \R^3} |x|^{3/p} |f|^*(x) \,.
$$
 To see this identity, note that for any $t>0$ there is an $R_t>0$ such that $\{|x|<R_t\}=\{|f|^*>t\}$ and therefore $(4\pi/3)R_t^3 = |\{ |f| >t\}|$. We have, at least at points of continuity of $|f|^*$, $|f|^*(x)=t$ if $|x|=R_t$, so
$$
\sup_{x\in\R^3} |x|^{3/p} |f|^*(x) = \sup_{t>0} R_t^{3/p} t = (4\pi/3)^{-1/p} \sup_t  |\{|f|>t\}|^{1/p} t = (4\pi/3)^{-1/p} \Vert f \Vert_{w,p} \,.
$$

\begin{theorem} \label{magneticweak}
	Let $B \in L^{3/2}_w(\R^3: \R^3)$ be a magnetic field, i.e., ${\rm div}\, B=0$. If \eqref{zeromode} has a weak solution $0\not\equiv \psi \in H^1(\R^3: \C^2)$, then
$$
\Vert B \Vert_{w, 3/2}  \ge \frac12 \left( \frac{4\pi}{3}\right)^{2/3}
$$
or, equivalently,
\begin{equation}\label{weakbound}
\sup_{x\in\R^3} |x|^2 |B|^*(x) \ge \frac 12 \ .
\end{equation}
\end{theorem}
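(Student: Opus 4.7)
The plan is to follow the proof of Theorem~\ref{magnetic} up through its central functional inequality and then replace the final H\"older--Sobolev step by a chain combining the Hardy--Littlewood rearrangement inequality, the sharp three-dimensional Hardy inequality, and the P\'olya--Szeg\H{o} principle. Under the hypothesis $\psi\in H^1(\R^3:\C^2)$, Theorem~\ref{regularity} is no longer needed, but every other step of the proof of Theorem~\ref{magnetic} goes through, and after passing to the limit $\varepsilon\to 0$ in \eqref{crux} via monotone convergence it produces
$$
\int_{\R^3}\bigl|\nabla|\psi|^{1/2}\bigr|^2\,dx \,\le\, \tfrac12\int_{\R^3}|B(x)||\psi(x)|\,dx.
$$

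To bound the right-hand side, set $c:=\sup_x|x|^2|B|^*(x)$; by the identity derived just before the theorem, $c=(3/(4\pi))^{2/3}\Vert B\Vert_{w,3/2}$. The Hardy--Littlewood rearrangement inequality combined with the pointwise bound $|B|^*(x)\le c|x|^{-2}$ yields
$$
\int_{\R^3}|B||\psi|\,dx \,\le\, \int_{\R^3}|B|^*|\psi|^*\,dx \,\le\, c\int_{\R^3}\frac{|\psi|^*(x)}{|x|^2}\,dx.
$$
Since symmetric decreasing rearrangement commutes with nonnegative monotone maps, $(|\psi|^{1/2})^*=(|\psi|^*)^{1/2}$, so applying the sharp three-dimensional Hardy inequality to $u=(|\psi|^{1/2})^*$ followed by the P\'olya--Szeg\H{o} inequality gives
$$
\int_{\R^3}\frac{|\psi|^*(x)}{|x|^2}\,dx \,\le\, 4\int_{\R^3}\bigl|\nabla(|\psi|^{1/2})^*\bigr|^2\,dx \,\le\, 4\int_{\R^3}\bigl|\nabla|\psi|^{1/2}\bigr|^2\,dx.
$$

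Chaining these three inequalities produces $\int|\nabla|\psi|^{1/2}|^2\,dx \le 2c\int|\nabla|\psi|^{1/2}|^2\,dx$. Since $\psi\not\equiv 0$ lies in $H^1\subset L^6$, the function $|\psi|^{1/2}$ cannot be a.e.\ constant, so its Dirichlet integral is strictly positive; dividing forces $c\ge 1/2$, which is precisely \eqref{weakbound}, and the equivalent norm form follows immediately.

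The principal technical obstacle I anticipate is justifying the Hardy and P\'olya--Szeg\H{o} steps when $|\psi|^{1/2}$ is not a priori known to lie in $\dot H^1(\R^3)$, as well as ruling out a vacuous $\infty=\infty$ in the final division step. I would circumvent both issues by carrying out the entire chain at the regularised level using $u_\varepsilon:=|\psi|_\varepsilon^{1/2}-\varepsilon^{1/2}\in\dot H^1$ (whose $\dot H^1$-membership is established in the proof of Theorem~\ref{magnetic}), observing that $|\psi|_\varepsilon^*=\sqrt{(|\psi|^*)^2+\varepsilon^2}$ implies $(u_\varepsilon^*)^2\nearrow|\psi|^*$ monotonically as $\varepsilon\searrow 0$, and then passing to the limit by monotone convergence. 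Finiteness of $\int|\nabla u_\varepsilon|^2$ for each $\varepsilon>0$ makes the resulting $\varepsilon$-level inequality $\int|\nabla u_\varepsilon|^2\le 2c\int|\nabla u_\varepsilon|^2$ nontrivial, and forces $u_\varepsilon\equiv 0$—hence $|\psi|\equiv 0$—whenever $c<1/2$.
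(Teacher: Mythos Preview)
Your approach is correct and is exactly what the paper sketches: derive \eqref{crux} and replace the Sobolev step by the rearrangement--Hardy--P\'olya--Szeg\H{o} chain from \eqref{hardy}. One small correction to your last paragraph: the loop does not close at fixed~$\varepsilon$, because $(u_\varepsilon^*)^2\le|\psi|^*$ points the wrong way; instead, from $\tfrac14\int(u_\varepsilon^*)^2|x|^{-2}\,dx\le\int|\nabla u_\varepsilon|^2\,dx\le\tfrac{c}{2}\int|\psi|^*|x|^{-2}\,dx$ pass to the limit $\varepsilon\to 0$ on the left by monotone convergence and then cancel the common integral $\int|\psi|^*|x|^{-2}\,dx$, which is finite and positive since $\psi\in H^1\subset L^2\cap L^6$.
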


We shall argue  that this result is in some sense sharp by showing that `zero modes' of the magnetic monopole saturate the inequality \eqref{weakbound}.

We start with a few simple observations regarding the ground state problem for the Schr\"odin\-ger equation.
Recall that Sobolev's inequality shows that, for 
the Schr\"odin\-ger operator $-\Delta -V, V\ge 0$,  to have a bound state, i.e., a negative energy solution that is in $L^2(\R^3)$, a necessary condition is that $\Vert V \Vert_{3/2} \ge S_3$. This result is sharp. Another scale invariant quantity that is indicative for the existence of a bound state is the weak norm $\Vert V \Vert_{w,3/2}$. Indeed, using rearrangements \cite[Theorems 3.4 and 7.17]{LL} and Hardy's inequality one arrives at 
\begin{align} \label{hardy}
	\frac14 \int_{\R^3}\frac{ (f^*)^2 }{|x|^2} \,dx & \leq \int_{\R^3} |\nabla f^*|^2 \,dx \leq 
\int_{\R^3} |\nabla f|^2\,dx \le \int_{\R^3} V f^2\,dx \notag \\
& \leq \int_{\R^3} V^* (f^*)^2 \,dx \le 
\left( \sup_{x\in\R^3} |x|^2 V^*(x)  \right) \int_{\R^3}\frac{ (f^*)^2 }{|x|^2} \,dx \,.
\end{align}
Hence
\begin{equation} \label{optimalsob}
	\left( \frac{4\pi}3 \right)^{-2/3} \| V \|_{w,3/2} = \sup_{x\in\R^3} |x|^2 V^*(x) \geq \frac14
\end{equation}
is a necessary condition for the existence of a bound state.

Another point one should make is that the function $|x|^{-1/2}$ is a solution of the Sobolev equation
\begin{equation*}\label{sobequation}
-\Delta f = \frac 14 f^5
\end{equation*}
and this solution is in $L^6_w(\R^3)$. Thus, one may ask for a necessary condition on $\Vert V \Vert_{w,3/2}$ so that the Schr\"odinger operator has a bound state in $L^6_w(\R^3)$. A bound state in this context is a sub-solution $0\le  f \in L^6_w(\R^3)$, i.e., $-\Delta f -Vf \le 0$, and such that $\Delta f \in L^1_{loc}(\R^3)$. We claim that under these weaker assumptions, we still have \eqref{optimalsob}. To see this, once more, multiplying the sub-solution inequality by $f_c = \min\{ \frac1c, [f-c]_+ \}$, where $[\cdot]_+$ denotes the positive part and $c>0$ is small, we obtain
$$
\int_{\R^3} |\nabla f_c|^2 \,dx = - \int_{\R^3} f_c\Delta f \,dx \le \int_{\R^3} V f_c f\, dx \ .
$$
Here, the integration by parts is justified since $f_c\Delta f \in L^1(\R^3)$ \cite[Theorem 7.7]{LL}. Now arguing similarly as before, using rearrangment inequalities and Hardy's inequality and noting that $(f^*)_c = (f_c)^* =: f_c^*$,
\begin{align}\label{eq:hardy2}
	\frac14 \int_{\R^3}\frac{ (f^*_c)^2 }{|x|^2} \,dx & \leq \int_{\R^3} |\nabla f^*_c|^2 \,dx \leq \int_{\R^3} |\nabla f_c|^2\,dx \le \int_{\R^3} V f_c f\,dx \notag \\
	& \leq \int_{\R^3} V^* f_c^* f^* \,dx \le 
	\left( \sup_{x\in\R^3} |x|^2 V^*(x)  \right) \int_{\R^3}\frac{ f_c^* f^*}{|x|^2} \,dx \,.
\end{align}
If $\int_{\R^3} \frac{(f^*)^2}{|x|^2}  dx < \infty$, then monotone convergence yields \eqref{hardy} and hence \eqref{optimalsob}. If $\int_{\R^3} \frac{(f^*)^2}{|x|^2}  dx = \infty$, we note that
$$
\int_{\R^3} \frac{f_c^*f^*}{|x|^2}dx = c \int_{c \le f^* \le c+1/c} \frac{f_c^*}{|x|^2}dx + \int_{c\le f^* \le c+ 1/c} \frac{(f_c^*)^2}{|x|^2}dx +\frac1c  \int_{f^*> c+ 1/c} \frac{f^*}{|x|^2}dx \ .
$$
Since $f^*$ is symmetric decreasing and belongs to $L^6_w(\R^3)$, we have that
$$
f^*(x) \le \frac{D}{|x|^{1/2}}
$$
for some constant $D$. Simple estimates then show that
$$
c \int_{c \le f^* \le c+ 1/c} \frac{f_c^*}{|x|^2}dx \le D^2 \qquad {\rm and} \qquad
\frac1c  \int_{f^*>c+  1/c} \frac{f^*}{|x|^2}dx \le 2D^2 \ .
$$
From these estimates we glean that
$$
\lim_{c \to 0} \frac{\int_{\R^3} \frac{f_c^*f^*}{|x|^2}dx}{\int_{\R^3} \frac{(f_c^*)^2}{|x|^2}dx} = 1
$$
which, when inserted into \eqref{eq:hardy2}, yields again \eqref{optimalsob}. This shows our claim that \eqref{optimalsob} holds under the weaker sub-solution assumptions.

Using the singular solution $|x|^{-1/2}$ of the Sobolev equation \eqref{sobequation} we see that \eqref{optimalsob} is sharp!

\begin{proof}[Proof of Theorem \ref{magneticweak}]
Because $\psi \in H^1(\R^3: \C^2)$ we can trace the steps leading to inequality \eqref{crux}. Instead of using the Sobolev inequality we use Hardy's inequality and argue as above.
\end{proof}

\begin{remark}
	The assumption $\psi\in H^1(\R^3:\C^2)$ in Theorem \ref{magneticweak} can be replaced by the formally weaker assumption that $|\{|\psi|>\tau \}|<\infty$ for every $\tau<\infty$ and
	\begin{equation}
		\label{eq:lorentzass}
		\int_{\R^3} \frac{(|\psi|^*)^2}{|x|^2}\,dx <\infty \,.
	\end{equation}
	It is well-known and easy to see that assumption \eqref{eq:lorentzass} is equivalent to $\psi$ belonging to the Lorentz space $L^{6,2}(\R^3)$. It is a simple consequence of Hardy's inequality that $\psi\in H^1(\R^3:\C^2)$ implies \eqref{eq:lorentzass}, so \eqref{eq:lorentzass} is formally a weaker assumption. On the other hand, if for $B$ as in the theorem we choose $A\in L^3_w(\R^3,\R^3)$ with ${\rm curl}A =B$, then
	$$
	\int_{\R^3} |\sigma\cdot A \psi |^2\,dx \leq \int_{\R^3} |A|^2|\psi|^2\,dx \leq \int_{\R^3} (|A|^*)^2 (|\psi|^*)^2\,dx \leq (\sup_{\R^3} |x| |A|^*)^2 \int_{\R^3} \frac{(|\psi|^*)^2}{|x|^2}\,dx \,,
	$$
	and so, if $\psi$ is a solution of \eqref{zeromode} satisfying \eqref{eq:lorentzass}, then $-i\sigma\nabla\psi = \sigma\cdot A \psi\in L^2(\R^3)$. As discussed before Lemma \ref{diamagref}, this implies $\nabla\psi\in L^2(\R^3)$, which is enough for the proof of Theorem \ref{magneticweak}.	
\end{remark}

It turns out that there is a solution for the zero mode equation that is analogous to \eqref{sobequation}.
We shall consider  the magnetic field of a monopole, which is strictly speaking not a standard magnetic field
but serves to explain some of the structure of our problem. 
Starting with the spinor
$$
\psi = \frac{1}{\sqrt 2 r^{3/2}} \left( \begin{array}{c} \sqrt{r+z} \\ \frac{x+iy}{\sqrt{r+z}}  \end{array}\right) \ ,
$$
where $r = \sqrt{x^2+y^2+z^2}$,  we easily verify
$$
\sigma\cdot \vec x \psi = r \psi
$$
 and
$$
|\psi|^2 = \frac{1}{r^2} \ .
$$ 
Now consider the monopole $A$-field
$$
A = g\frac{(-y,x,0)}{r(r+z)}
$$
with a parameter $g$ representing the monopole strength. As always, one has to exclude the negative $z$-axis, $\{ z\leq 0\}$. There is an analogous formula for the vector potential $A'$
where the positive $z$-axis, $\{ z\geq 0\}$, has to be excluded. The fields $A$ and $A'$ differ by a gauge in the complement of 
the $z$-axis.
Again a simple computation yields
$$
{\rm curl} A = g\frac{\vec x}{r^3} \ .
$$
We also have that
$$
\sigma \cdot A \psi = i g \frac{1}{\sqrt 2 r^{5/2}} \left(\begin{array}{c} -\frac{r-z}{\sqrt{r+z}} \\ \frac{x+iy}{\sqrt{r+z}} \end{array}\right)
$$
and
$$
\sigma \cdot (-i\nabla) \psi = i\frac{1}{2}\frac{1}{\sqrt 2 r^{5/2}}\left( \begin{array}{c} -\frac{r-z}{\sqrt{r+z}} \\ \frac{x+iy}{\sqrt{r+z}} \end{array}\right),
$$
from which we get that
$$
\sigma \cdot \left(-i \nabla -A\right) \psi = 0
$$
{\it if we choose} $g=\frac{1}{2}$. This is the smallest value for the monopole strength.
Since
$$
|\psi|^{1/2} = r^{-1/2} \ ,
$$
and
$$
|\nabla |\psi|^{1/2}|^2 = \frac14 \frac{1}{r^3}
$$
we find
$$
 \frac{1}{2} \frac{1}{|\psi|}\langle \psi, \sigma\cdot B \psi \rangle= \frac{1}{2} g \frac{1}{r^2} |\psi|= \frac{1}{4} \frac{1}{r^3} \ ,
$$
since $g=1/2$. Hence
$$
|\nabla |\psi|^{1/2}|^2 = \frac12 B \cdot \langle \frac{\psi}{|\psi|}, \sigma \psi\rangle 
$$
which is a pointwise inequality for the integrants in \eqref{crux}. Note that the magnetic monopole field $x |x|^{-3}$ is in $L^{3/2}_w(\R^3:\R^3)$. Likewise, the spinor  is in $L^3_w(\R^3: \C^2)$ (but it is not in $H^1(\R^3:\C^2)$ and does not satisfy \eqref{eq:lorentzass}). This situation is very analogous to the scalar case.  The monopole field with strength $1/2$ clearly satisfies the condition \eqref{weakbound}, in fact with equality, and hence, if we allow monopole fields into our considerations, we learn that \eqref{weakbound} is sharp for the existence of zero modes.


\section{Proof of Theorem \ref{spinor}}

We assume that $-i\sigma\cdot\nabla\psi = 3\lambda\psi$ for a spinor $\psi\in L^p(\R^3:\C^2)$ for some $3/2<p<\infty$ and a real function $\lambda\in L^3(\R^3)$. By a straightforward modification of the proof of Theorem \ref{regularity}, we have $\psi \in L^r(\R^3:\C^2)$ for all $3/2 <  r < \infty$. Therefore, by the same argument as at the beginning of the proof of Theorem \ref{magnetic},  $\psi \in H^1(\R^3:\C^2)$.

We consider again the operator
$$
\Pi(\alpha \otimes \psi)_j = \alpha_j \psi - \frac{1}{3} \sigma_j \sigma \cdot \alpha \psi \,,
$$
but proceed in a slightly different manner by considering
$$
\Pi(\partial_j - i\lambda(x) \sigma_j)\phi= (\partial_j - i\lambda(x) \sigma_j)\phi - \frac{1}{3} \sigma_j  \sigma\cdot ( \nabla -i \lambda(x) \sigma) \phi \ .
$$
\begin{lemma}\label{diamagref2}
Let $\psi \in L^p(\R^3:\C^2)$, $3/2<p <\infty$ satisfy $-i \sigma \cdot \nabla \psi =3 \lambda(x) \psi$. Then $\psi \in H^1(\R^3:\C^2)$,
and $|\psi| \in H^1(\R^3)$ as well and moreover,  almost everywhere in $\R^3$,
\begin{equation} \label{modified}
\left|\nabla |\psi|\right|^2 \leq \frac 23\ \left|[\nabla-i\lambda(x) \sigma ]\psi \right|^2 \,.
\end{equation}
\end{lemma}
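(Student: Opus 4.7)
The plan is to mimic the proof of Lemma \ref{diamagref} essentially verbatim, with the modified covariant derivative $D_j := \partial_j - i\lambda(x)\sigma_j$ playing the role that $\partial_j - iA_j$ played before. The key algebraic observation is that if we set $D = (D_1, D_2, D_3)$, then
$$
\sigma \cdot D\psi = \sigma\cdot\nabla\psi - i\lambda \sum_j \sigma_j^2\, \psi = \sigma\cdot\nabla\psi - 3i\lambda\psi,
$$
and the hypothesis $-i\sigma\cdot\nabla\psi = 3\lambda\psi$ (i.e.\ $\sigma\cdot\nabla\psi = 3i\lambda\psi$) is exactly what makes $\sigma\cdot D\psi = 0$. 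Thus $D\psi$ satisfies a ``zero-mode-like'' relation and, as in the proof of Lemma \ref{diamagref}, is fixed by the projection
$$
(\Pi(\alpha\otimes v))_j = \alpha_j v - \tfrac13 \sigma_j\, \sigma\cdot\alpha\, v,
$$
so $D\psi = \Pi D\psi$.

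Regularity is handled first: using the straightforward modification of Theorem \ref{regularity} cited at the start of Section 5, $\psi \in L^r$ for all $3/2 < r < \infty$. In particular $\psi \in L^6$, and since $\lambda \in L^3$, Hölder gives $\lambda\psi \in L^2$. The equation then shows $\sigma\cdot\nabla\psi \in L^2$, hence $\nabla\psi \in L^2$, so $\psi \in H^1(\R^3:\C^2)$ (recalling that $\psi$ is also square integrable); and standard chain-rule facts yield $|\psi| \in H^1(\R^3)$.

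For the pointwise inequality, I would reuse the approximation $|\psi|_\varepsilon = \sqrt{|\psi|^2+\varepsilon^2}$. The crucial identity is that
$$
\partial_j |\psi|_\varepsilon = \re\!\left\langle \frac{\psi}{|\psi|_\varepsilon}, \partial_j\psi\right\rangle = \re\!\left\langle \frac{\psi}{|\psi|_\varepsilon}, D_j\psi\right\rangle,
$$
the second equality being valid because the extra term $-i\lambda \re\langle \psi/|\psi|_\varepsilon, \sigma_j\psi\rangle$ vanishes: $\langle \psi,\sigma_j\psi\rangle$ is real, so after multiplication by $-i\lambda$ its real part is zero. Therefore, exactly as in \eqref{magic1},
$$
\bigl|\nabla|\psi|_\varepsilon\bigr| = \re\!\left\langle \frac{\psi}{|\psi|_\varepsilon}\,\frac{\nabla|\psi|_\varepsilon}{|\nabla|\psi|_\varepsilon|}\,,\, D\psi\right\rangle.
$$
Using self-adjointness of $\Pi$ together with $D\psi = \Pi D\psi$, Cauchy–Schwarz, and the identity $|\Pi(\alpha\otimes v)|^2 = \tfrac23 |\alpha|^2|v|^2$ (applied with $v = \psi/|\psi|_\varepsilon$ of norm $\le 1$ and $\alpha$ a unit vector), one obtains
$$
\bigl|\nabla|\psi|_\varepsilon\bigr|^2 \le \tfrac23\, |D\psi|^2 = \tfrac23\, |(\nabla - i\lambda\sigma)\psi|^2,
$$
and letting $\varepsilon \to 0$ gives the claimed bound almost everywhere.

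I do not anticipate a significant obstacle: the proof is structurally identical to that of Lemma \ref{diamagref}, and the only point where one must be a bit careful is checking that the replacement of $A_j$ by $\lambda\sigma_j$ preserves the derivation $\partial_j|\psi|_\varepsilon = \re\langle \psi/|\psi|_\varepsilon, D_j\psi\rangle$; this works because $\sigma_j$ is Hermitian so the ``gauge'' correction is purely imaginary and drops out under the real part. The matrix-valued nature of the ``potential'' $\lambda\sigma_j$ is therefore harmless.
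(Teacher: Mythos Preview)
Your proposal is correct and follows essentially the same route as the paper: the paper likewise replaces $\partial_j - iA_j$ by $\partial_j - i\lambda\sigma_j$, observes that $\sigma\cdot(\nabla - i\lambda\sigma)\psi = 0$ so that $\Pi$ fixes $(\nabla - i\lambda\sigma)\psi$, and notes that the extra term drops out under the real part because $\langle\psi,\sigma_j\psi\rangle$ is real. Your handling of the regularity and of the $|\psi|_\varepsilon$ approximation also matches the paper's.
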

\begin{proof}
Since the proof is technically the same as the proof of Lemma \ref{diamagref} we just indicate the changes.
The starting point is the identity
$$
|\nabla |\psi|| =  {\rm Re} \langle \frac{\nabla |\psi| \psi}{|\nabla |\psi|| |\psi|}, \nabla \psi \rangle
=  {\rm Re} \langle \frac{\nabla |\psi| \psi}{|\nabla |\psi|| |\psi|}, [\nabla - i\lambda(x) \sigma] \psi \rangle
=  {\rm Re} \langle \frac{\nabla |\psi| \psi}{|\nabla |\psi|| |\psi|}, \Pi [\nabla - i\lambda(x) \sigma] \psi \rangle \,,
$$
since $\psi$ solves the spinor equation. The claimed inequality then follows as in the proof of Lemma \ref{diamagref}. Again, these steps can be made rigorous by considering the function
$|\psi|_\varepsilon$ and then taking the limit $\varepsilon \to 0$. 
\end{proof} 
We continue with the proof of Theorem \ref{spinor}.  By Lemma \ref{squareroot},
$$
\left|\nabla |\psi|_\varepsilon^{1/2}\right|^2 = \frac12 \left( \re \left\langle \nabla \frac{\psi}{|\psi|_\varepsilon},\nabla\psi \right\rangle - |\psi|_\varepsilon^{-1} |\nabla\psi|^2 \right) + \frac{3}{4} \frac{|\psi|^2}{|\psi|_\varepsilon^3} \left|\nabla|\psi|\right|^2 
$$
$$
=\frac12 \left( \re \left\langle [\nabla - i\lambda \sigma]  \frac{\psi}{|\psi|_\varepsilon}, [\nabla- i\lambda \sigma]  \psi \right\rangle - |\psi|_\varepsilon^{-1} |[\nabla - i\lambda \sigma] \psi|^2 \right) + \frac{3}{4} \frac{|\psi|^2}{|\psi|_\varepsilon^3} \left|\nabla|\psi|\right|^2 \,.
$$
noting that 
 \begin{equation} \label{spin}
 {\rm Re} \left(\nabla \left( \frac{1}{|\psi|_\varepsilon}\right) \cdot \langle \psi, i\sigma \psi\rangle\right) = 0 \ .
 \end{equation}
 Lemma \ref{diamagref2} then yields the estimate
 $$
 \left|\nabla |\psi|_\varepsilon^{1/2}\right|^2 \le \frac12 \left( \re \left\langle [\nabla - i\lambda \sigma]  \frac{\psi}{|\psi|_\varepsilon}, [\nabla- i\lambda \sigma]  \psi \right\rangle - \frac32 |\psi|_\varepsilon^{-1} |\nabla |\psi||^2 \right) + \frac{3}{4} \frac{|\psi|^2}{|\psi|_\varepsilon^3} \left|\nabla|\psi|\right|^2
 $$
which simplifies to
$$
 \left|\nabla |\psi|_\varepsilon^{1/2}\right|^2 \le \frac12 \re \left\langle [\nabla - i\lambda \sigma]  \frac{\psi}{|\psi|_\varepsilon}, [\nabla- i\lambda \sigma]  \psi \right\rangle  - \frac{3}{4} \frac{\varepsilon^2}{|\psi|_\varepsilon^3} \left|\nabla|\psi|\right|^2 \ .
$$
Integrating this inequality, taking into account that $\frac{\psi}{|\psi|_\varepsilon}  \in H^1(\R^3:\C^2)$ by Remark \ref{psiepsrem}, we find
$$
\int_{\R^3} \left|\nabla |\psi|_\varepsilon^{1/2}\right|^2 dx \le \frac12 \int_{\R^3}  \re \left\langle [\nabla - i\lambda \sigma]  \frac{\psi}{|\psi|_\varepsilon}, [\nabla- i\lambda \sigma]  \psi \right\rangle dx
$$
which, on account of \eqref{spin}, Lemma \ref{byparts} (with $A=0$) and the spinor equation reduces to
$$
\int_{\R^3} \left|\nabla |\psi|_\varepsilon^{1/2}\right|^2 dx \le 3 \int_{\R^3} \lambda^2 \frac{|\psi|^2}{|\psi|_\varepsilon} dx \ .
$$
Continuing as in the proof of Theorem \ref{magnetic} we find, using Sobolev's and H\"older's inequality, that
$$
S_3 \le 3  \left( \int_{\R^3} | \lambda|^3 dx \right)^{2/3} \ .
$$
This is the claimed inequality. Finally, we note that we have equality for $\lambda(x) = \frac{1}{1+x^2}$, since
$$
\left( \int_{\R^3} \left( \frac{1}{1+x^2}\right)^3 dx \right)^{2/3} = \frac{1}{4} \left( \int_{\R^3} \left( \frac{2}{1+x^2}\right)^3 dx \right)^{2/3} = \frac{1}{4}\, |\Sp^3|^{2/3} = \frac13\, S_3 \,.
$$


\section{Sketch of a proof of Theorems \ref{genmagnetic} and   \ref{spinorgeneral}}
\label{sec:higherdim}

First the proof of Theorem \ref{spinorgeneral}: With the same argument as in the proof of Theorem \ref{regularity} one can show that $\psi \in L^r$ for all $\frac{d}{d-1} < r < \infty$. In particular, $\psi \in L^{2d / (d-2)}$ and therefore $\psi \in H^1(\R^d : \C^{2^\nu})$. Recall that $\nu=(d-1)/2$ if $d$ is odd and $\nu=d/2$ if $d$ is even. Since the steps in the proof of Theorem \ref{spinorgeneral} completely analogous to the proof of Theorem \ref{spinor} 
we just give a sketch of the argument. Recall that $d$ is the dimension of the underlying space. The projection $\Pi(\alpha \otimes \psi)$ is now given by
$$
\Pi(\alpha \otimes \psi) = \alpha_i \psi - \frac{1}{d} \gamma_i (\alpha \cdot \gamma) \psi
$$
from which one easily gleans the estimate
$$
|\Pi(\alpha \otimes \psi)|^2 \le \frac{d-1}{d} |\alpha|^2|\psi|^2 \ .
$$
Likewise, the equation
$$
\Pi(\partial_j - i\lambda(x) \gamma_j)\phi= (\partial_j - i\lambda(x) \gamma_j)\phi - \frac{1}{d} \gamma_j  \gamma \cdot ( \nabla -i \lambda(x) \gamma) \phi \ .
$$
reduces to 
$$
\Pi(\partial_j - i\lambda(x) \gamma_j)\phi= (\partial_j - i\lambda(x) \gamma_j)\phi \ ,
$$
if $\phi$ satisfies the spinor equation \eqref{generalequation}.
Analogous to Lemma \ref{diamagref2} we have
\begin{equation} \label{diamaggen}
d |\nabla |\psi||^2 \le (d-1) |(\nabla -i\lambda(x) \gamma )\psi|^2 \ .
\end{equation}
By a straightforward computation in line with  Lemma \ref{squareroot} one finds
\begin{equation} \label{rootgeneral}
{\rm Re} \langle \nabla \frac{\psi}{|\psi|_\varepsilon^{\frac{2}{d-1}}}, \nabla \psi\rangle = \frac{|\nabla \psi|^2}{|\psi|_\varepsilon ^{\frac{2}{d-1}}} - \frac{2}{d-1} \frac{|\psi|^2}{|\psi|_\varepsilon^{2+\frac{2}{d-1}}} |\nabla |\psi||^2
\end{equation}
and a further computation yields
$$
{\rm Re} \langle [\nabla - i\lambda \gamma ] \frac{\psi}{|\psi|_\varepsilon^{\frac{2}{d-1}}}, [\nabla -i\lambda \gamma]\psi\rangle = \frac{|[\nabla -i\lambda \gamma]  \psi|^2}{|\psi|_\varepsilon ^{\frac{2}{d-1}}} - \frac{2}{d-1} \frac{|\psi|^2}{|\psi|_\varepsilon^{2+\frac{2}{d-1}}} |\nabla |\psi||^2 \,.
$$
Using \eqref{diamaggen}, this leads to the inequality
$$
{\rm Re} \langle [\nabla - i\lambda \gamma ] \frac{\psi}{|\psi|_\varepsilon^{\frac{2}{d-1}}}, [\nabla -i\lambda \gamma]\psi\rangle \ge \frac{d}{d-1} \frac{ |\nabla |\psi||^2}{|\psi|_\varepsilon ^{\frac{2}{d-1}}} - \frac{2}{d-1} \frac{|\psi|^2}{|\psi|_\varepsilon^{2+\frac{2}{d-1}}} |\nabla |\psi||^2 \,,
$$
which, in turn, is bounded below by
$$
 \frac{d-2}{d-1} \frac{|\psi|^2}{|\psi|_\varepsilon^{2+\frac{2}{d-1}}} |\nabla |\psi||^2 \,.
$$
Set $q :=1- \frac{1}{d-1} =\frac{d-2}{d-1}$ and compute, using the chain rule for Sobolev functions,
$$
|\nabla |\psi|^q_\varepsilon|^2 = q^2 \frac{|\psi |^2}{|\psi|_\varepsilon^{4-2q} }|\nabla |\psi||^2 = \left(\frac{d-2}{d-1}\right)^2 \frac{|\psi |^2}{|\psi|_\varepsilon^{2+\frac{2}{d-1}} }|\nabla |\psi||^2 \,,
$$
which yields
$$
{\rm Re} \langle [\nabla - i\lambda \gamma ] \frac{\psi}{|\psi|_\varepsilon^{\frac{2}{d-1}}}, [\nabla -i\lambda \gamma]\psi\rangle \ge \frac{d-1}{d-2} |\nabla |\psi|_\varepsilon^q|^2 \ .
$$
Integrating this inequality and using the analog of Lemma \ref{byparts} yields
$$
\frac{d-1}{d-2} \int_{\R^d} |\nabla |\psi|_\varepsilon^q|^2 \,dx 
 \le \int_{\R^d}  {\rm Re} \langle [\nabla - i\lambda \gamma ] \frac{\psi}{|\psi|_\varepsilon^{\frac{2}{d-1}}}, [\nabla -i\lambda \gamma]\psi\rangle dx 
 $$
 and, by proceeding with reasoning similar to the one in the previous section, the right side equals
 $$
 d(d-1) \int_{\R^d} |\psi|^{2q}  \lambda^2 dx \,.
 $$
 Using H\"older's inequality, we therefore obtain
$$
\int_{\R^d} |\nabla |\psi|_\varepsilon^q|^2 \,dx  \le d(d-2) \left( \int_{\R^d} |\psi|^{q \frac{2d}{d-2}} \,dx \right)^{\frac{d-2}{d}} \left( \int_{\R^d} |\lambda(x)|^d \,dx\right)^{\frac{2}{d}}  \ .
$$
We bound the left side from below by Sobolev's inequality
$$
 \int_{\R^d} |\nabla |\psi|_\varepsilon^q|^2 dx  \ge S_d  \left( \int_{\R^d} |\psi|^{q \frac{2d}{d-2}} dx \right)^{\frac{d-2}{d}} \,.
 $$
 If $\psi\not\equiv 0$, we conclude that
 $$
  \left( \int_{\R^d} |\lambda(x)|^d dx\right)^{\frac{2}{d}}  \ge \frac1{d(d-2)}\, S_d \,,
$$
 which is the statement of the theorem. The Dunne-Min spinor (and its obvious generalization to even dimensions) satisfies the equation in the theorem with $\lambda(x) = \frac{1}{1+|x|^2}$ and it is easy to check that this yields equality in the above inequality.

The proof of Theorem \ref{genmagnetic}  is analogous. Using \eqref{rootgeneral}, a simple computation yields
$$
{\rm Re} \langle (\nabla -i A) \frac{\psi}{|\psi|^{\frac{2}{d-1}}_\varepsilon}, (\nabla - iA)\psi\rangle =
\frac{|(\nabla -iA)\psi|^2}{|\psi|^{\frac{2}{d-1}}_\varepsilon} -\frac{2}{d-1} \frac{|\psi|^2}{|\psi|^{2+\frac{2}{d-1}}_\varepsilon}
|\nabla|\psi||^2 \,,
$$
which, when combined with the analogue of the inequality in Lemma \ref{diamagref} to higher dimensions, namely
$$
|\nabla |\psi||^2 \le \frac{d-1}{d} |(\nabla -iA)\psi|^2 \,,
$$
yields
$$
{\rm Re} \langle (\nabla -i A) \frac{\psi}{|\psi|^{\frac{2}{d-1}}_\varepsilon}, (\nabla - iA)\psi\rangle \ge \frac{d-2}{d-1} \frac{|\nabla |\psi||^2}{|\psi|^{\frac{2}{d-1}}_\varepsilon} \,.
$$
The analogue of Lemma \ref{byparts} reads
$$
\int_{\R^d} \frac{1}{|\psi|^{\frac{2}{d-1}}_\varepsilon} {\rm Re}  \left\{\sum_{j < k} [\partial_jA_k-\partial_k A_j]  \langle \psi, i \gamma_j \gamma_k \psi \rangle \right\} dx + \int_{\R^d} {\rm Re}  \langle \gamma \cdot (\nabla -iA)\frac{\psi}{|\psi|^{\frac{2}{d-1}}_\varepsilon}, \gamma \cdot (\nabla -iA) \psi\rangle dx 
$$
$$
= \int_{\R^d} {\rm Re} \langle (\nabla -i A) \frac{\psi}{|\psi|^{\frac{2}{d-1}}_\varepsilon}, (\nabla - iA)\psi\rangle \,dx \,.
$$ 
Hence, by the zero mode equation,
$$
\int_{\R^d} \frac{1}{|\psi|^{\frac{2}{d-1}}_\varepsilon} {\rm Re}  \left\{\sum_{j < k} [\partial_jA_k-\partial_k A_j]  \langle \psi, i \gamma_j \gamma_k \psi \rangle \right\} dx \ge \frac{d-2}{d-1} \int_{\R^d}  \frac{|\nabla |\psi||^2}{|\psi|^{\frac{2}{d-1}}_\varepsilon} \,dx
$$
It is shown in Appendix \ref{genBfield} that
$$
{\rm Re}  \left\{\sum_{j < k} [\partial_jA_k-\partial_k A_j]  \langle \psi, i \gamma_j \gamma_k \psi \rangle \right\}
\le \nu^{1/2}  |\psi|^2 |B| \,,
$$
where we recall the definition of $|B|$ given in the statement of the theorem and the notation $\nu=(d-1)/2$ if $d$ is odd and $\nu=d/2$ if $d$ is even. Thus, we get the estimate
$$
\frac{d-2}{d-1} \int_{\R^d}  \frac{|\nabla |\psi||^2}{|\psi|^{\frac{2}{d-1}}_\varepsilon}\,dx \le  \nu^{1/2}  \int_{\R^d}\frac{|\psi|^2}{|\psi|^{\frac{2}{d-1}}_\varepsilon} |B| \,dx \ .
$$ 
Simple computations using Sobolev's inequality then yield the result.
 
\section{Proof of Theorem \ref{improvedz}}
The proof in \cite{FLL} is based on two ingredients, namely the diamagnetic inequality and a certain Hardy--Sobolev inequality. Here we modify both these inputs, namely, we use the improved diamagnetic inequality for zero modes and we use a different Hardy--Sobolev inequality for which we can determine the sharp constant.

For comparison we review the argument in \cite{FLL}. If $\sigma\cdot(-i\nabla-A)\psi=0$, then, with $B:={\rm curl} A$,
$$
0 = \int_{\R^3} |\sigma\cdot(-i\nabla-A)\psi|^2 \,dx = \int_{\R^3} |(-i\nabla -A)\psi|^2\,dx - \int_{\R^3} B\cdot\langle\psi,\sigma\psi\rangle\,dx \,,
$$
that is,
$$
\int_{\R^3} |(-i\nabla -A)\psi|^2\,dx = \int_{\R^3} B\cdot\langle\psi,\sigma\psi\rangle\,dx \,.
$$
By the diamagnetic inequality, the left side is bounded from below by $\int_{\R^3} |\nabla |\psi||^2\,dx$, while, in view of $B\cdot\langle\psi,\sigma\psi\rangle\leq |B| |\psi|^2$, the right side is bounded from above by $\|B\|_2 \|\psi\|_4^2$. Thus, setting $u=|\psi|$ and dropping the constraint that $\psi$ is a zero mode, we obtain
$$
\hat z := 8\pi\alpha^2 Z_c \geq \inf\left\{ \frac{\|\nabla u\|_2^4\,}{\|u\|_4^4\, (u,|x|^{-1} u)} :\ u\in H^1(\R^3)\,,\ \|u\|_2=1 \right\} \ .
$$
The right side can be thought of as the sharp constant in a certain Hardy--Sobolev inequality. Fr\"ohlich, Lieb and Loss do not compute this constant explicitly, but they bound it using the hydrogen uncertainty principle $\|\nabla u\|_2 \|u\|_2 \geq (u,|x|^{-1} u)$ and the Sobolev interpolation inequality $\|\nabla u\|_2^{3/2} \|u\|_2^{1/2} \geq S \|u\|_4^2$ with a numerical value for the constant $S$. The authors also observe that by combining these two sharp inequalities they obtain a constant which is very close to the sharp constant in the more complicated Hardy--Sobolev inequality.

We now turn to the proof of our improved bound. It consists essentially in showing that
$$
\hat z \geq 4\, \inf\left\{ \frac{\|\nabla u\|_2^4}{(|u|^2,|x|^{-1}|u|^2)} :\ u\in \dot H^1(\R^3) \right\}
$$
and computing the infimum on the right side explicitly.

\begin{proof}[Proof of Theorem \ref{improvedz}]
As we have shown in the proof of Theorem \ref{magnetic}, if $\psi$ is a normalized zero mode, then
$$
\int_{\R^3} |\nabla |\psi|_\epsilon^{1/2}|^2\,dx \leq \frac12 \int_{\R^3} |B| \frac{|\psi|^2}{|\psi|_\epsilon}\,dx \,,
$$
where $|\psi|_\epsilon = \sqrt{|\psi|^2+\epsilon^2}$. We bound the right side from above by
$$
\frac12 \int_{\R^3} |B| \frac{|\psi|^2}{|\psi|_\epsilon}\,dx \leq \frac12 \|B\|_2 \|\psi\|_2 = \frac12 \|B\|_2 \,.
$$
On the other hand, using the Sobolev inequality in Theorem \ref{hardysobolev} in the next section, we can bound the left side from below by
$$
\int_{\R^3} |\nabla |\psi|_\epsilon^{1/2}|^2\,dx  \geq \sqrt{\frac{8\pi}{3}} \left( \int_{\R^3} \frac{(|\psi|_\epsilon^{1/2}-\epsilon^{1/2})^4}{|x|}\,dx \right)^{1/2} \,.
$$
Thus, we obtain
$$
\frac{\int_{\R^3} |B(x)|^2\,dx}{\int_{\R^3} \frac{(|\psi|_\epsilon^{1/2}-\epsilon^{1/2})^4}{|x|}\,dx} \geq \frac{32\,\pi}{3} \,.
$$
By dominated convergence, this gives
$$
\frac{\int_{\R^3} |B(x)|^2\,dx}{\int_{\R^3} \frac{|\psi|^2}{|x|}\,dx} \geq \frac{32\,\pi}{3} \,,
$$
and, recalling \eqref{criticalzee}, implies the theorem.
\end{proof}


\section{A sharp Hardy--Sobolev inequality}

In the previous section we used the following sharp inequality.

\begin{theorem}\label{hardysobolev}
For any $u\in\dot H^1(\R^3)$,
$$
\int_{\R^3} |\nabla u|^2 \,dx \geq \sqrt{\frac{8\pi}{3}} \left( \int_{\R^3} \frac{|u|^4}{|x|}\,dx \right)^{1/2}.
$$
Equality holds if and only if $u$ is a multiple or dilate of
$$
(1+|x|)^{-1} \,.
$$
\end{theorem}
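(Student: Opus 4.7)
The plan is to reduce Theorem \ref{hardysobolev} to a sharp one-dimensional Gagliardo--Nirenberg inequality through rearrangement and a conformal change of variables. By the P\'olya--Szeg\H{o} principle and Riesz's rearrangement inequality (noting that $|x|^{-1}$ is itself symmetric decreasing about the origin), passing from $u$ to its symmetric decreasing rearrangement $u^*$ decreases $\int|\nabla u|^2\,dx$ while increasing $\int u^4/|x|\,dx$; hence it suffices to prove the inequality for nonnegative, spherically symmetric, radially non-increasing $u$, with equality forcing radial symmetry about the origin. For such $u$ I introduce $v(r)=r\,u(r)$, so $v(0)=0$. Writing $u=v/r$ and integrating by parts yields
\begin{equation*}
\int_{\R^3}|\nabla u|^2\,dx = 4\pi\int_0^\infty v'(r)^2\,dr \,, \qquad \int_{\R^3}\frac{u^4}{|x|}\,dx = 4\pi\int_0^\infty \frac{v(r)^4}{r^3}\,dr \,,
\end{equation*}
where the boundary term $[r\,u(r)^2]_0^\infty$ vanishes: for radial decreasing $u\in L^4(|x|^{-1}dx)$, the elementary estimate $\int_r^{2r}u(s)^4 s\,ds\geq \tfrac{3r^2}{2}u(2r)^4$ forces $r\,u(r)^2\to 0$ at both endpoints.

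Next I substitute $r=e^\tau$ and $v(r)=e^{\tau/2}\psi(\tau)$, so that $v'(r)=e^{-\tau/2}(\psi'(\tau)+\tfrac12\psi(\tau))$. A direct calculation gives
\begin{equation*}
\int_0^\infty v'(r)^2\,dr = \int_{\R}\left(\psi'(\tau)+\tfrac12\psi(\tau)\right)^2 d\tau \,, \qquad \int_0^\infty\frac{v(r)^4}{r^3}\,dr = \int_{\R}\psi(\tau)^4\,d\tau \,.
\end{equation*}
Expanding the square produces a cross term $\int\psi\psi'\,d\tau=[\psi^2/2]_{-\infty}^\infty$, and since $\psi(\tau)^2=v(r)^2/r=r\,u(r)^2$, this boundary term vanishes by the same decay as above. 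After collecting the constants $4\pi$, Theorem \ref{hardysobolev} becomes equivalent to the sharp one-dimensional inequality
\begin{equation*}
\int_{\R}\left(\psi'(\tau)^2+\tfrac14\psi(\tau)^2\right)d\tau \,\geq\, \sqrt{\tfrac{2}{3}}\left(\int_\R \psi(\tau)^4\,d\tau\right)^{1/2}.
\end{equation*}

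To establish this 1D Gagliardo--Nirenberg inequality with its sharp constant, I use the direct method. A nonnegative even minimizer of the ratio exists by 1D symmetric-decreasing rearrangement (which preserves all three integrals in the appropriate direction) together with standard compactness in $H^1_{\mathrm{rad}}(\R)\hookrightarrow L^4(\R)$. Any minimizer satisfies the Euler--Lagrange equation $-\psi''+\tfrac14\psi=\mu\psi^3$ with $\mu>0$; multiplying by $\psi'$ and integrating, together with $\psi,\psi'\to 0$ at $\pm\infty$, yields the first integral $(\psi')^2=\tfrac14\psi^2-\tfrac{\mu}{2}\psi^4$, uniquely solved up to translation by $\psi(\tau)=(2\mu)^{-1/2}\mathrm{sech}(\tau/2)$. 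Plugging $\psi(\tau)=\mathrm{sech}(\tau/2)$ into the inequality directly (using $\int\psi^2\,d\tau=4$, $\int\psi'^2\,d\tau=1/3$, $\int\psi^4\,d\tau=8/3$) shows that both sides equal $4/3$, confirming the sharp constant $\sqrt{2/3}$. Tracing back, $\psi(\tau)=\mathrm{sech}(\tau/2)$ corresponds to $v(r)=r/(1+r)$ and hence $u(r)=(1+r)^{-1}$; translations in $\tau$ map to dilations in $r$, scalar multiples are preserved, and combined with the rigidity of the rearrangement step this yields the claimed characterization of equality. The main technical obstacle I expect is careful justification of the vanishing boundary terms for general $u\in\dot H^1(\R^3)$; this is most safely handled by first proving everything for smooth compactly supported radial $u$, where all calculations are valid termwise, and then extending by a density argument.
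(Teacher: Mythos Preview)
Your proof is correct and follows essentially the same route as the paper: reduce to radial functions by rearrangement, then use a logarithmic change of variables to land on the sharp one-dimensional Gagliardo--Nirenberg inequality $\int_\R(\psi'^2+\tfrac14\psi^2)\,d\tau\geq\sqrt{2/3}\,(\int_\R\psi^4\,d\tau)^{1/2}$ with optimizer $\mathrm{sech}(\tau/2)$. Your two-step substitution $v=r\,u$ followed by $v(r)=e^{\tau/2}\psi(\tau)$ composes to exactly the paper's single substitution $u(r)=r^{-1/2}f(\ln r)$; the paper simply cites the one-dimensional inequality as a classical result of Sz.-Nagy, whereas you supply a direct variational proof of it.
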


We deduce this theorem from the following well-known one-dimensional inequality. The sharp constant was computed, for instance, by Sz.-Nagy in 1941, \cite{Sz.-N}.

\begin{lemma}\label{sobolevnagy}
For any $f\in H^1(\R)$,
$$
\int_\R \left( |f'|^2 + \frac14 |f|^2\right)dt \geq \sqrt{\frac{2}{3}} \left( \int_\R |f|^4\,dt \right)^{1/2}.
$$
Equality holds if and only if $f$ is a multiple or translate of
$$
(\cosh(t/2))^{-1} \,.
$$
\end{lemma}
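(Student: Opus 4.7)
The plan is to determine the sharp constant by locating the minimizer of
\[ \mathcal R[f] := \frac{\int_\R\bigl(|f'|^2 + \tfrac14|f|^2\bigr)\,dt}{\bigl(\int_\R |f|^4\,dt\bigr)^{1/2}} \]
and evaluating the ratio there. By the one-dimensional symmetric decreasing rearrangement (P\'olya--Szeg\H{o}), $\int |f'|^2\,dt$ does not increase under rearrangement while $\int |f|^2\,dt$ and $\int |f|^4\,dt$ are preserved, so it suffices to consider $f\geq 0$ even and nonincreasing on $[0,\infty)$.

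Next I would establish the existence of a minimizer $f_\star$ subject to $\int |f_\star|^4\,dt = 1$. A minimizing sequence $\{f_n\}$ in the symmetric decreasing class is bounded in $H^1(\R)$ and inherits from its monotonicity the pointwise decay $f_n(t)^2 \leq \|f_n\|_2^2/(2|t|)$. The former yields weak $H^1$-convergence along a subsequence to some $f_\star$ and strong $L^4_{\mathrm{loc}}$-convergence by Rellich--Kondrachov; the latter provides uniform smallness of $\int_{|t|\geq R} f_n^4\,dt$ as $R\to\infty$, upgrading the convergence to strong in $L^4(\R)$. Thus $\int f_\star^4\,dt = 1$, and weak lower semicontinuity of $J[f] := \int(|f'|^2 + \tfrac14 f^2)\,dt$ identifies $f_\star$ as a minimizer with value $I := J[f_\star]$.

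The Euler--Lagrange equation then reads
\[ -f_\star'' + \tfrac14 f_\star = \lambda f_\star^3, \qquad \lambda = I/2, \]
the second identity coming from pairing with $f_\star$. Multiplying the ODE by $f_\star'$ and integrating produces the conservation law $(f_\star')^2 = f_\star^2(\tfrac14 - \lambda f_\star^2) - 2C$. Since $f_\star\in H^1(\R)$ decays at infinity, a nonzero $C$ would force $(f_\star')^2$ to a positive limit, inconsistent with $f_\star\to 0$; hence $C=0$. Separating variables in the resulting relation $(f_\star')^2 = f_\star^2(\tfrac14 - \lambda f_\star^2)$, with $f_\star(0) = (2\sqrt{\lambda})^{-1}$ read off from the interior maximum, yields $f_\star(t) = (2\sqrt{\lambda}\,\cosh(t/2))^{-1}$ after a translation.

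Finally, inserting this profile into the constraint and using $\int_\R (\cosh(t/2))^{-4}\,dt = 8/3$ gives $(4\lambda^2)^{-1}\cdot\tfrac83 = 1$, so $\lambda = 1/\sqrt{6}$ and $I = 2\lambda = \sqrt{2/3}$. The translation and dilation freedom in the equality analysis yield the characterization of extremals as multiples and translates of $(\cosh(t/2))^{-1}$. I expect the main obstacle to be the compactness argument for existence; once strong $L^4$-convergence of the minimizing sequence is secured, the rest is elementary ODE integration.
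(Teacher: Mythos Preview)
The paper does not actually prove this lemma: it is stated with a reference to Sz.-Nagy (1941) and then used as a black box in the proof of Theorem~\ref{hardysobolev}. So there is no proof in the paper to compare against, and your variational argument (rearrangement, compactness via the pointwise decay of symmetric decreasing functions, Euler--Lagrange, explicit ODE integration) is the standard route and is sound in outline.

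A few small slips in your computation happen to cancel. Pairing the Euler--Lagrange equation $-f_\star'' + \tfrac14 f_\star = \lambda f_\star^3$ with $f_\star$ under the constraint $\int f_\star^4 = 1$ gives $\lambda = I$, not $\lambda = I/2$. Correspondingly, the first integral should read $(f_\star')^2 = \tfrac14 f_\star^2 - \tfrac{\lambda}{2} f_\star^4$ (you dropped the factor $\tfrac12$), so that $f_\star(0) = (2\lambda)^{-1/2}$ and the constraint becomes $\tfrac{2}{3\lambda^2} = 1$, whence $\lambda = I = \sqrt{2/3}$. Your two factor-of-two errors compensate and you reach the correct constant. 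Also, the functional $\mathcal R$ has no dilation invariance (rescaling $t$ changes the ratio), so ``dilation freedom'' in your last sentence should read ``scaling freedom in the amplitude''; the extremals are exactly the nonzero multiples of translates of $(\cosh(t/2))^{-1}$, as stated.
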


\begin{proof}[Proof of Theorem \ref{hardysobolev}]
By rearrangement \cite[Theorems 3.4 and 7.17]{LL} it suffices to prove the inequality for radial functions $u$. For the latter, the claimed inequality becomes
$$
\int_0^\infty |\partial_r u|^2 r^2\,dr \geq \sqrt{\frac{2}{3}} \left( \int_0^\infty |u|^4 r^2\,dr \right)^{1/2}.
$$
Note that, since the weight $|x|^{-1}$ is strictly decreasing, the rearrangement inequality is strict \cite[Theorem 3.4]{LL} and therefore equality in the three-dimensional inequality holds if and only if equality holds in the one-dimensional inequality for a radial, non-increasing function.

Finally, we set $u(r) = r^{-1/2} f(\ln r)$ and note that $\partial_r u = r^{-3/2}(f'(\ln r) - (1/2) f(\ln r))$. Thus, by a change of variables,
$$
\int_0^\infty |\partial_r u|^2 r^2\,dr = \int_\R \left( |f'|^2 + \frac14 |f|^2\right)dt
\qquad\text{and}\qquad
\int_0^\infty |u|^4 r^2\,dr = \int_\R |f|^4\,dt \,.
$$
Therefore, the theorem is a consequence of Lemma \ref{sobolevnagy}.
\end{proof}


\section{Some open problems}

Another take on the problem whether a magnetic field can support a zero mode is the following. 
Starting from a zero mode one finds, using H\"older's inequality, that
$$
\Vert -i \sigma\cdot \nabla  \psi\Vert_{3/2} \le \Vert A \Vert_3 \Vert \psi \Vert_3 \,.
$$
Using the Hardy-Littlewood-Sobolev inequality it is easy to see that there exists a constant $C_s>0$ such that
\begin{equation} \label{spinorinequality}
\Vert -i \sigma \cdot \nabla  \psi \Vert_{3/2} \ge C_s \Vert\psi \Vert_3 \ .
\end{equation}
Likewise, it is not hard to see that there must be a constant $C_f>0$ such that
\begin{equation} \label{fieldinequality}
\Vert {\rm curl} A \Vert_{3/2} \ge C_f \inf_\phi \Vert A +\nabla \phi \Vert_3  \ .
\end{equation}
The functional $\phi \mapsto \Vert A +\nabla \phi \Vert_3$ is convex and hence there is a minimizer, $\phi_0$, and the Euler-Lagrange equation is
$$
{\rm div} |A+\nabla \phi_0| (A+\nabla \phi_0) = 0 \ .
$$
Thus we have that
\begin{equation} \label{field}
\Vert {\rm curl} A \Vert_{3/2} \ge C_f \Vert A \Vert_3
\end{equation}
where we impose the additional constraint
$$
{\rm div} (|A|A) = 0 \ .
$$
From this, we readily see that a necessary condition for the existence of a zero mode is that
$$
\Vert B \Vert_{3/2} \ge C_sC_f \,.
$$
Thus, it remains to determine the sharp constants in the inequalities. We do not know how to do this but give some results that point to  interesting connections with other areas of mathematics.
The existence of optimizers for \eqref{fieldinequality} is non-trivial and we will address this in another paper. Formally computing the Euler-Lagrange equations yields
\begin{equation}\label{eq:elpsi}
-i \sigma \cdot \nabla\left(  \frac{-i \sigma \cdot \nabla \psi}{|-i\sigma \cdot \nabla \psi|^{1/2}}\right) = e_s |\psi|\psi
\end{equation}
and 
\begin{equation}
\label{eq:ela}
{\rm curl}\left( \frac{{\rm curl} A }{|{\rm curl} A|^{1/2}}\right) = e_f |A|A \ ,
\end{equation}
where $e_s$ and $e_f$ are positive numbers. It is straightforward to check that the expressions \eqref{lossyau1} and \eqref{lossyau2} are solutions of these
equations.

On account of the non-linear nature of equations \eqref{eq:elpsi} and \eqref{eq:ela} one could choose $e_s=e_f=1$ but we choose not to do so. However, to make these expressions more palatable we set 
$$
\phi :=\frac{1} {\sqrt {e_s}} \frac{-i \sigma \cdot \nabla \psi}{|-i\sigma \cdot \nabla \psi|^{1/2}}
$$
so that \eqref{eq:elpsi} can be written as a system
$$
-i \sigma \cdot \nabla \phi = \sqrt{e_s} |\psi|\psi \ , \ -i \sigma \cdot \nabla \psi = \sqrt{e_s} |\phi|\phi \ .
$$
The same can be done with \eqref{eq:ela} by setting
$$
C = \frac{1}{\sqrt {e_f}} \frac{{\rm curl} A }{|{\rm curl} A|^{1/2}}
$$
and see that
$$
{\rm curl} C = \sqrt {e_f} |A|A \ , \ {\rm curl A} = \sqrt{e_f} |C| C \ .
$$
Thus, we end up with two pairs of dual equations. As we mentioned, we cannot say much about these systems of equations. Self-dual solutions to these equations are special solutions where $\phi= \psi$ and $C=A$ and once more, one can easily check that \eqref{lossyau1} and \eqref{lossyau2} satisfy 
\begin{equation} \label{selfdualspinor}
-i \sigma \cdot \nabla \psi = 3 |\psi|\psi
\end{equation}
and
\begin{equation} \label{selfdualfield}
{\rm curl} A = \frac43 |A|A \ .
\end{equation}
\begin{remark}
Since the spinor given by \eqref{lossyau1}   also satisfy the non-selfdual equations \eqref{eq:elpsi} (with $e_s = 3^{3/2}$)  and the field given by  \eqref{lossyau2} satisfies \eqref{eq:ela} (with $e_f = (\frac43)^{3/2}$) we venture the conjecture that they are optimizers for the inequalities given by \eqref{spinorinequality} and \eqref{fieldinequality}. If one accepts this conjecture one obtains $C_s = \frac32 |\Sp^3|^{1/3}=(3\,S_3)^{1/2}$ and $C_f = 2 |\Sp^3|^{1/3}=((16/3)S_3)^{1/2}$ and hence $\Vert B \Vert_{3/2} \ge 3 |\Sp^3|^{2/3} = 4 S_3$ as a necessary condition
for the existence on a zeromode. Thus, the truth of this conjecture would imply an improvement of the bound in Theorem \ref{magnetic} by a factor of $2$.
\end{remark}

Inequality \eqref{spinorinequality} is equivalent to the `integral' inequality
\begin{equation} \label{HLSSpin0}
\left| \left(\phi_1, \frac{1}{-i \sigma\cdot \nabla} \phi_2\right) \right| \le C_s^{-1}  \Vert \phi_1 \Vert_{3/2} \Vert \phi_2 \Vert_{3/2} \,.
\end{equation}
We now consider the simpler problem of finding the sharp constant in this inequality in the special case $\phi_1=\phi_2$, that is, finding the sharp constant $C$ in the inequality
\begin{equation} \label{HLSSpin}
\left| \left(\phi, \frac{1}{-i \sigma\cdot \nabla} \phi\right) \right|  \le C  \Vert \phi \Vert_{3/2}^2 \ .
\end{equation}
The operator$ \frac{1}{-i \sigma\cdot \nabla}$ has an integral kernel given by
$$
\frac{i}{4 \pi} \frac{\sigma \cdot x}{|x|^3}
$$
and hence the validity of \eqref{HLSSpin} for some constant follows from the Hardy-Littlewood-Sobolev inequality. We note that since this kernel is not positive definite, it is not clear that the optimal constant in \eqref{HLSSpin0} is achieved for $\phi_1=\phi_2$.

In fact, we shall consider this problem in any odd dimension $d \ge 3$.

\begin{theorem} \label{spinHLS}
Let $d\geq 3$ be odd and assume that there exists an optimizer $\phi \in L^{\frac{2d}{d+1}}(\R^d: \C^{2^\frac{d-1}{2}})$ for the inequality 
$$
\left| (\psi, [-i\gamma \cdot \nabla]^{-1} \psi) \right| \le  C \Vert \psi \Vert_{\frac{2d}{d+1}}^2 \ .
$$
Then $C=((d-2)/d)^{1/2} S_d^{-1/2}$ is the best possible constant and there is equality if
$$
\phi = \frac{1+i\gamma \cdot x}{(1+|x|^2)^{\frac{d+2}{2}}}|0\rangle  \ ,
$$
where $|0\rangle$ is a  well-chosen constant spinor.
\end{theorem}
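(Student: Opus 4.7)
The plan is to derive the Euler--Lagrange equation for an optimizer, observe that after a pointwise rescaling of the spinor it becomes precisely the nonlinear Dirac equation treated in Theorem~\ref{spinorgeneral}, and extract the sharp constant from that theorem. Equality is then verified by direct computation on the spinor displayed in the statement.

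Set $p = 2d/(d+1)$ and let $\psi$ denote the assumed optimizer, normalized so that $\|\psi\|_p = 1$ and, after a sign choice, $(\psi, [-i\gamma\cdot\nabla]^{-1}\psi) = C$. A standard first-variation argument, using that $[-i\gamma\cdot\nabla]^{-1}$ is symmetric, yields the Euler--Lagrange equation $[-i\gamma\cdot\nabla]^{-1}\psi = C|\psi|^{p-2}\psi$. Introduce $\tilde\psi := [-i\gamma\cdot\nabla]^{-1}\psi$, so that $-i\gamma\cdot\nabla\tilde\psi = \psi$ and $\tilde\psi = C|\psi|^{p-2}\psi$. Since $p-2 = -2/(d+1)$, one has $|\tilde\psi| = C|\psi|^{(d-1)/(d+1)}$ and, inverting, $\psi = C^{-(d+1)/(d-1)}|\tilde\psi|^{2/(d-1)}\tilde\psi$. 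Substituting back turns the Euler--Lagrange equation into the conformally invariant nonlinear Dirac equation
$$
-i\gamma\cdot\nabla\tilde\psi \;=\; d\,\lambda(x)\,\tilde\psi, \qquad \lambda(x) \;:=\; \tfrac{1}{d}\,C^{-(d+1)/(d-1)}\,|\tilde\psi(x)|^{2/(d-1)}.
$$

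I would then apply Theorem~\ref{spinorgeneral}. The normalization gives $\int_{\R^d}|\tilde\psi|^{2d/(d-1)}\,dx = C^{2d/(d-1)}\int_{\R^d}|\psi|^p\,dx = C^{2d/(d-1)}$, so $\tilde\psi\in L^{2d/(d-1)}$, which lies in the allowed range $(d/(d-1),\infty)$, and $\lambda\in L^d(\R^d)$ with $\int_{\R^d}|\lambda|^d\,dx = (dC)^{-d}$. Theorem~\ref{spinorgeneral} then yields $(dC)^{-2}\ge S_d/[d(d-2)]$, equivalently $C^2 \le (d-2)/(dS_d)$, which is the stated upper bound. For sharpness, set $\tilde\psi_0(x) = (1+i\gamma\cdot x)/(1+|x|^2)^{d/2}\,|0\rangle$; the identity $(1-i\gamma\cdot x)(1+i\gamma\cdot x) = 1+|x|^2$ gives $|\tilde\psi_0|^2 = (1+|x|^2)^{1-d}$ and hence $|\tilde\psi_0|^{2/(d-1)} = (1+|x|^2)^{-1}$, so that the spinor $\phi$ displayed in the theorem equals $|\tilde\psi_0|^{2/(d-1)}\tilde\psi_0$. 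By~\eqref{spinoroddeven}, $-i\gamma\cdot\nabla\tilde\psi_0 = d(1+|x|^2)^{-1}\tilde\psi_0 = d\phi$, so $[-i\gamma\cdot\nabla]^{-1}\phi = \tilde\psi_0/d$. A short calculation using the classical evaluation $\int_{\R^d}(1+|x|^2)^{-d}\,dx = |\Sp^d|/2^d$ (beta-function integral together with Legendre duplication) then gives
$$
\frac{(\phi,[-i\gamma\cdot\nabla]^{-1}\phi)}{\|\phi\|_p^2} \;=\; \frac{2}{d\,|\Sp^d|^{1/d}} \;=\; \sqrt{\tfrac{d-2}{d}}\;S_d^{-1/2},
$$
which saturates the inequality.

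The main obstacle is the algebraic step in the second paragraph: recognizing that the exponent $p = 2d/(d+1)$ is precisely what makes the Euler--Lagrange equation coincide, after rescaling, with the conformally invariant equation of Theorem~\ref{spinorgeneral}. Without this reorganization the connection to the earlier sharp bound is invisible. A secondary concern is making the first-variation argument rigorous for a merely $L^p$ optimizer (e.g., by smooth truncation), and verifying $\tilde\psi\not\equiv 0$ so that Theorem~\ref{spinorgeneral} applies nontrivially. The restriction to odd $d$ enters only through the explicit construction of the equality case via the Dunne--Min spinor.
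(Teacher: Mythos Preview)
Your argument is correct and follows essentially the same route as the paper: derive the Euler--Lagrange equation for an optimizer, rescale pointwise to obtain a solution of $-i\gamma\cdot\nabla\psi = d|\psi|^{2/(d-1)}\psi$, apply Theorem~\ref{spinorgeneral} to bound $C$, and verify equality on the explicit spinor. Your presentation is slightly more detailed in tracking the normalization constants and in the equality computation, but the strategy and all the key steps coincide with the paper's proof.
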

\begin{proof}
A simple variation calculation shows that an optimizing spinor $\phi$, suitably normalized, satisfies the equation
$$
 \frac{1}{-i \gamma \cdot \nabla}\phi = |\phi|^{-\frac{2}{d+1}} \phi \ .
$$
Multiplying this equation by $\phi$ and integrating yields
$$
\left(\phi, \frac{1}{-i \gamma\cdot \nabla} \phi\right) = \Vert\phi \Vert_{\frac{2d}{d+1}}^{\frac{2d}{d+1}} \,,
$$
so
$$
C = \Vert \phi \Vert_{\frac{2d}{d+1}}^{-\frac{2}{d+1}} \ .
$$
If we set $\psi = d^{-\frac{d-1}{2}} |\phi|^{-\frac{2}{d+1}} \phi$ we find that $\psi \in L^{\frac{2d}{d-1}}(\R^d:\C^{2^{\frac{d-1}{2}}})$ is a solution of 
$$
-i\gamma \cdot \nabla \psi = d|\psi|^{\frac{2}{d-1}}\psi \ .
$$
Theorem \ref{spinorgeneral} then says that
$$
\frac{S_d}{d(d-2)} \le \Vert \psi \Vert_{\frac{2d}{d-1}}^{\frac{4}{d-1}} = \frac{1}{d^2}  \Vert \phi \Vert^{\frac{4}{d+1}}_{\frac{2d}{d+1}}
$$
and hence 
$$
C = \Vert \phi \Vert^{-\frac{2}{d+1}}_{\frac{2d}{d+1}}  \le \left( \frac{d-2}{d} \right)^{1/2} S_d^{-1/2} \ .
$$
It is easy to see that the spinor $\phi$ given in the theorem, with $|0\rangle$ chosen as the vacuum defined in Appendix \ref{dirac}, yields the inequality with the constant $((d-2)/d)^{1/2} S_d^{-1/2}$ and hence this constant is sharp.
\end{proof}


\section{Hijazi's approach} \label{hijaziapproach}

Theorem \ref{spinHLS} can also be viewed as a corollary of work by Hijazi \cite{Hijazi,Hijazi91}. The argument as it is presented {\it does not require the existence of an optimizer}, however, it requires that the spinors are $C^\infty$ and do not vanish, conditions that are not needed in our previous approach, which is based on the chain rule in Sobolev spaces. We present it because it is a very different and  interesting perspective and, while we think that one can remove the aforementioned conditions, this would only obfuscate the beauty of the reasoning. The approach rests on the conformal invariance of the functional \eqref{HLSSpin}, and it is therefore natural to explore this structure for the proof of Theorem \ref{spinHLS}. 

First we give some background. Consider the Dirac operator $D$ on a compact $d$ dimensional Riemannian manifold  with metric $g$ that carries a spin structure. The Lichnerowicz formula
$$
D^2 = \nabla^* \nabla + \frac14 R \,,
$$
where $R$ is the scalar curvature, leads to
\begin{equation} \label{form}
\int_M |D\psi|^2 \,d {\rm vol} = \int_M |\nabla \psi|^2 \,d {\rm vol} + \frac14 \int_M R|\psi|^2 \,d {\rm vol} \ .
\end{equation}
One sees from this formula, e.g., that if the scalar curvature is positive, then there is no harmonic spinor. In particular, there is a gap. To get a good lower bound on $\lambda_1(D)$, the eigenvalue of the Dirac operator that has smallest magnitude, one uses the projection
$$
T_X\psi = \nabla_X\psi +\frac1d X\cdot D\psi \ ,
$$
where the dot denotes Clifford multiplication. One finds that
\begin{equation} \label{square}
|\nabla \psi|^2 =|T\psi|^2 +\frac1d |D\psi|^2 \ .
\end{equation}
Note that this is essentially the same step as what we have used before with the introduction of the
projection $\Pi$. Using  \eqref{square} in \eqref{form} one gets
$$
\left(1-\frac1d \right) \int_M |D\psi|^2 \,d {\rm vol} = \int_M |T\psi|^2 \,d {\rm vol} + \frac14 \int_M R|\psi|^2  \,d {\rm vol}
$$
which yields the estimate
$$
\lambda_1(D)^2 \ge \frac{d}{4(d-1)} \inf_M R \ ,
$$
due to Friedrich \cite[Section 5.1]{Friedrich}.
In a further step, it was shown  in \cite{Hijazi} that for $d\ge 3$ 
$$
\lambda_1(D)^2 \ge  \frac{d}{4(d-1)}\,  \lambda_1(L) \ ,
$$
where $\lambda_1(L)$ is the lowest eigenvalue of the conformal Laplacian $L$, that is,
$$
\lambda_1(L) = \inf \frac{\int_M  \left( 4\frac{d-1}{d-2} |\nabla f|^2   +R f^2\right) d {\rm vol}}{\int_M f^2 d {\rm vol}} \ .
$$

If one changes the metric $g$ to the metric $g_u = e^{2u} g$ and denotes the Dirac operator in this new metric by $D_u$, then 
$$
D_u \psi_u = \left( e^{-\frac{d+1}{2}u}D e^{\frac{d-1}{2}u} \psi\right)_u \ .
$$
The spin bundles for $g$ and $g_u$ are isomorphic and $\psi_u$ is the image of $\psi$ under this isomorphism. Moreover, the conformal Laplacian changes to
$$
L_u = e^{-\frac{d+2}{2} u} L e^{\frac{d-2}{2}u} \,.
$$
Hence, in this context Hijazi's inequality reads
$$
\lambda_1(D_u)^2 \ge  \frac{d}{4(d-1)}  \lambda_1(L_u) \ ,
$$
where
$$
\lambda_1(L_u) = \inf  \frac{\int_M  \left( 4 \frac{d-1}{d-2} |\nabla (e^{\frac{(d-2)}{2} u} f)|^2 + R (e^{\frac{(d-2)}{2} u} f)^2\right) d {\rm vol}}{ \int_M f^2 e^{du} d{\rm vol}} \ .
$$
By H\"older's inequality,
$$
\int_M f^2 e^{du} d{\rm vol} \le \left( \int_M f^{\frac{2d}{d-2}} e^{du} d{\rm vol}\right)^{\frac{d-2}{d}} \left( \int_M e^{du} d{\rm vol}\right)^{\frac{2}{d}} \ ,
$$
which leads to the lower bound
\begin{align*}
	\lambda_1(D_u)^2 \left( \int_M  e^{du} d{\rm vol}\right)^{\frac{2}{d}} & \ge  \frac{d}{4(d-1)}  \inf_f  \frac{\int_M  \left( 4 \frac{d-1}{d-2} |\nabla (e^{\frac{(d-2)}{2} u} f)|^2 + R (e^{\frac{(d-2)}{2} u} f)^2\right) d {\rm vol}}{ \left( \int_M \left(e^{\frac{(d-2)}{2} u} f\right) ^{\frac{2d}{d-2}} d{\rm vol}\right)^{\frac{d-2}{d}} } \\
	& = \frac{d}{4(d-1)}  \inf_h  \frac{\int_M  \left( 4 \frac{d-1}{d-2} |\nabla h|^2 + R h^2 \right) d {\rm vol}}{ \left( \int_M h ^{\frac{2d}{d-2}} d{\rm vol}\right)^{\frac{d-2}{d}} } \ .
\end{align*}
The right side is a constant times the Yamabe constant of $(M,[g])$. This result was found in \cite{Hijazi91}.

If one applies the previous inequality to $M=\Sp^d$ with its standard metric and uses the sharp Sobolev inequality, one obtains the lower bound
 \begin{equation} \label{hijazibound}
 \lambda_1(D_u)^2   \left( \int_{\Sp^d}  e^{du} \, d{\rm vol}\right)^{\frac{2}{d}} \ge \frac{d}{d-2} \inf_h \frac{\int_{\Sp^d} \left(|\nabla h|^2 + \frac{d(d-2)}{4} h^2 \right) d {\rm vol} }
 {\left(  \int_{\Sp^3} h^{\frac{2d}{d-2}} \, d {\rm vol} \right)^{\frac{d-2}{d}}} = \frac{d^2}{4}\, |\Sp^d|^{2/d} \,.
\end{equation} 
In other words, we have that
$$
\Big | \int_{\Sp^d} \!\langle \phi, D_u^{-1} \phi\rangle e^{du} \,d {\rm vol} \Big | \le \frac{1}{|\lambda_1(D_u)|} \int_{\Sp^d} \!|\phi|^2 e^{du} \,d {\rm vol} \le\frac2d\,|\Sp^d|^{-1/d}\left( \int_{\Sp^d} \!e^{du} \,d{\rm vol} \right)^{1/d}\! \int_{\Sp^d}\! |\phi|^2 e^{du} \,d {\rm vol}.
$$
Since $D_u = e^{-\frac{d+1}{2}u} D e^{\frac{d-1}{2}u}$, we have that
$$
\Big | \int_{\Sp^d} \langle e^{\frac{d+1}{2} u} \phi, D^{-1}e^{\frac{d+1}{2}u} \phi\rangle  \,d {\rm vol} \Big | \le\frac2d\, |\Sp^d|^{-1/d}\left( \int_{\Sp^d} e^{du} \,d{\rm vol} \right)^{1/d} \int_{\Sp^d} |\phi|^2 e^{du} \,d {\rm vol}
$$
and, setting $e^{\frac{d+1}{2}u}\phi = \tilde \phi$, we find
$$
\Big | \int_{\Sp^d} \langle \tilde \phi, D^{-1} \tilde \phi\rangle  \, d {\rm vol} \Big | \le \frac2d\, |\Sp^d|^{-1/d}\left( \int_{\Sp^d} e^{du} \, d{\rm vol} \right)^{1/d}  \int_{\Sp^d} |\tilde \phi|^2 e^{-u} \,d {\rm vol} \,.
$$
If we choose $u$ such that $e^u = |\tilde \phi|^{\frac{2}{d+1}}$, then
$$
\Big | \int_{\Sp^d} \langle \tilde \phi, D^{-1} \tilde \phi\rangle  \,d {\rm vol} \Big | \le \frac2d\, |\Sp^d|^{-1/d} \left( \int_{\Sp^d} |\tilde \phi|^{\frac{2d}{d+1}} \,  d {\rm vol}\right)^{\frac{d+1}{d}}.
$$
Since $\Sp^d$ is conformally equivalent to $\R^d$, we obtain the desired inequality.
A similar statement ought to be true for the equation \eqref{selfdualfield}, but we were not able to 
adapt the methods to this case. This is an open problem.

Finally, let us remark that Hijazi's approach also sheds some light onto the proofs of Theorem \ref{spinor} and Theorem \ref{spinorgeneral}.
Assume we have a solution
$$
-i \gamma \cdot \nabla \psi  =  d \lambda \psi
$$
where we {\it assume} that $\lambda$ is nonnegative and sufficiently regular.
Define
$$
u = \ln \lambda
\qquad\text{and}\qquad
\phi = e^{- u (d-1)/2} \psi \,.
$$
Then, by the conformal transformation property $D_u = e^{-\frac{d+1}{2}u} D e^{\frac{d-1}{2}u}$, we have
$$
D_u \phi_u = d\, \phi_u,
$$
that is, the transformed operator $D_u$ has eigenvalue $d$. If one now applies inequality \eqref{hijazibound}, one obtains precisely the bound in Theorem \ref{spinorgeneral}. Thus, the proof of Theorem  \ref{spinorgeneral} given in  Section~\ref{sec:higherdim} has the advantage that it does not assume $\lambda$ to be nonnegative and, moreover, no regularity is assumed.


\appendix

\section{Some computations involving the Dirac matrices} \label{dirac}

The construction of zero modes in higher dimensions is more complicated and, as mentioned before, was accomplished by Dunne and Min \cite{Dunne-Min} using information about the Dirac equation on the sphere. The advantage of their construction is that it delivers automatically the dimension of the zero mode space. If one is satisfied with less information, then there is, we believe, a simpler way to construct the Dunne-Min zero modes. Moreover, it gives the opportunity to get acquainted with some of the properties of the Dirac matrices.  The basic idea is due to Adolf Hurwitz in his posthumously published paper `\"Uber die Komposition der quadratischen Formen' \cite{hurwitz}. In this paper he gave a complete classification of matrices $\gamma_j, j=1, \dots d$, satisfying the relations
$$
\gamma_j \gamma_k + \gamma_k \gamma_j = 2 \delta_{ij} \ .
$$
For our purposed we shall assume the the matrices $\gamma_j$ are self-adjoint in the space $\C^N$ with the usual inner product.

\begin{theorem} Let $d=2\nu+1$ or $d=2\nu$, and consider the  $N \times N$ hermitean matrices $\gamma_j, j=1, \dots, d$, satisfying
\begin{equation} \label{commrel}
\gamma_i \gamma_j + \gamma_j \gamma_i = 2 \delta_{ij} \ .
\end{equation} 
Then $N = 2^\nu$ and, if $\gamma'_j$ is another set of $2^\nu \times 2^\nu$ Hermitean matrices satisfying the same relations, then there exists a $2^\nu \times 2^\nu$ unitary matrix $A$ such that $\gamma'_j = A^*\gamma_j A$ for $j=1,\ldots,d$.
\end{theorem}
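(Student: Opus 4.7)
The plan is to use the Fock-space formalism for the canonical anticommutation relations (CAR), treating the complex Clifford algebra generated by the $\gamma_j$ as acting on a fermionic Fock space. I would first settle the even case $d=2\nu$, then bootstrap to $d=2\nu+1$ via a chirality element. Throughout I read the theorem as implicitly restricting to an irreducible (equivalently, minimal-dimensional) representation of the Clifford relations~\eqref{commrel}, since otherwise one could take direct sums and destroy the claim $N=2^\nu$.

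For $d=2\nu$, define
\[
a_k := \tfrac12(\gamma_{2k-1}+i\gamma_{2k}), \qquad a_k^* = \tfrac12(\gamma_{2k-1}-i\gamma_{2k}), \qquad k=1,\dots,\nu.
\]
A direct application of \eqref{commrel} gives the CAR $\{a_j,a_k\}=0$ and $\{a_j,a_k^*\}=\delta_{jk}$; in particular $a_k^2=0$. I would then produce a nonzero vacuum vector $|0\rangle$ annihilated by every $a_k$: start with any nonzero $\psi$, and for each $k$ in turn replace $\psi$ by $a_k\psi$ if this is nonzero, using $a_k^2=0$ and $\{a_j,a_k\}=0$ to preserve the annihilation property already established. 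The $2^\nu$ vectors
\[
a_{k_1}^* a_{k_2}^* \cdots a_{k_m}^* |0\rangle, \qquad 1\le k_1<k_2<\cdots<k_m\le\nu,
\]
are pairwise orthogonal (a short CAR computation using $a_k|0\rangle=0$) and span a subspace invariant under every $\gamma_j$; irreducibility forces this subspace to be all of $\C^N$, giving $N=2^\nu$.

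For odd $d=2\nu+1$, I would apply the above to the first $2\nu$ matrices and then treat the extra generator using the chirality element $\omega:=(-i)^\nu\gamma_1\gamma_2\cdots\gamma_{2\nu}$. Counting anticommutations shows $\omega$ is Hermitian, $\omega^2=1$, and $\omega$ anticommutes with each $\gamma_j$ for $j\le 2\nu$. Therefore $\omega\gamma_{2\nu+1}$ commutes with $\gamma_1,\dots,\gamma_{2\nu}$, is Hermitian, and squares to $1$; by Schur's lemma applied to the (irreducible) representation of the first $2\nu$ matrices, $\omega\gamma_{2\nu+1}=\pm I$, so $\gamma_{2\nu+1}=\pm\omega$ and again $N=2^\nu$.

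For uniqueness, given a second representation $\{\gamma_j'\}$ on $\C^{2^\nu}$ (matching the sign of $\omega$ in the odd case), I would construct corresponding operators $a_k'$ and pick a vacuum $|0'\rangle$ as above. The unitary $A$ is defined on the Fock basis by
\[
A\, a_{k_1}^*\cdots a_{k_m}^* |0\rangle \;=\; (a_{k_1}')^*\cdots (a_{k_m}')^* |0'\rangle,
\]
and the CAR relations (identical on both sides) together with orthonormality make it straightforward to check that $A$ is well defined, unitary, and satisfies $A^*\gamma_j' A=\gamma_j$ for every $j$. The main obstacle I anticipate is the existence of the vacuum together with the careful sign bookkeeping needed for orthogonality of the Fock basis and for the odd-dimensional chirality argument; once those algebraic preliminaries are in place the remainder is mechanical.
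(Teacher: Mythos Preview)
Your approach via the CAR and the Fock basis is precisely the formalism the paper develops in the lemmas immediately following the theorem (vacuum existence, completeness of the Fock vectors, uniqueness of the vacuum), though the paper's stated proof of the theorem itself is only the single sentence that one ``reduces the $\gamma$ matrices to a unitarily equivalent, but canonical set of matrices using an inductive procedure.'' So your write-up is in fact more explicit than what the paper provides, and the intertwiner $A$ you build from the two Fock bases is exactly the canonical identification the paper has in mind.

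There is one small gap in your odd-dimensional step. You invoke Schur's lemma for the representation of $\gamma_1,\dots,\gamma_{2\nu}$, but only the representation of the \emph{full} family $\gamma_1,\dots,\gamma_{2\nu+1}$ on $\C^N$ is assumed irreducible, and at that stage of the argument you do not yet know the Fock subspace is everything. The repair is immediate: $\omega$ is a product of an even number ($2\nu$) of matrices each anticommuting with $\gamma_{2\nu+1}$, so $\omega$ \emph{commutes} with $\gamma_{2\nu+1}$; hence $\omega\gamma_{2\nu+1}$ commutes with all $d$ generators, and Schur's lemma for the full irreducible representation yields $\omega\gamma_{2\nu+1}=\pm I$. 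Once $\gamma_{2\nu+1}=\pm\omega$ lies in the algebra generated by the first $2\nu$ matrices, the Fock subspace is invariant under every $\gamma_j$ and irreducibility gives $N=2^\nu$. Your parenthetical about matching the sign of $\omega$ in the odd case is well taken: the uniqueness clause as literally stated in the paper is false in odd dimensions without it (e.g.\ $(\sigma_1,\sigma_2,\sigma_3)$ versus $(\sigma_1,\sigma_2,-\sigma_3)$ in $d=3$), and the paper handles this only later, by adopting the convention $\gamma_1\phi=\phi$.
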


The proof proceeds by reducing the $\gamma$ matrices to a unitarily equivalent, but canonical set of matrices using an inductive procedure. 

\begin{corollary}\label{hurwitzcor}
Let $R$ be an $d\times d$ orthogonal matrix and define 
$$
\gamma'_j = \sum_{k=1}^d R_{jk} \gamma_k \ .
$$
Then there exists a unitary matrix $A$ such that for all $j=1,\ldots,d$ one has $\gamma'_j = A^* \gamma_j A$.
\end{corollary}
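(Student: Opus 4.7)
The plan is to reduce the corollary directly to the uniqueness clause of the preceding theorem, namely that any two sets of $2^\nu\times 2^\nu$ Hermitian matrices obeying the Clifford anticommutation relations \eqref{commrel} are intertwined by a single unitary. Since the $\gamma_j$'s are $N=2^\nu$ dimensional, the $\gamma'_j$ automatically live in the same matrix space; it therefore suffices to verify that the family $\{\gamma'_j\}_{j=1}^d$ is Hermitian and satisfies \eqref{commrel}.

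First I would check Hermiticity. Because $R$ is a real matrix and each $\gamma_k$ is Hermitian,
$$
(\gamma'_j)^* = \sum_{k=1}^d R_{jk}\,\gamma_k^* = \sum_{k=1}^d R_{jk}\,\gamma_k = \gamma'_j \,.
$$
Next, I would verify the Clifford relations. Expanding the product and renaming dummy indices gives
$$
\gamma'_j \gamma'_k + \gamma'_k \gamma'_j = \sum_{l,m} R_{jl} R_{km} \bigl(\gamma_l \gamma_m + \gamma_m \gamma_l\bigr) = 2 \sum_{l,m} R_{jl} R_{km}\, \delta_{lm} = 2 \sum_l R_{jl} R_{kl} = 2 (RR^T)_{jk} \,,
$$
and orthogonality of $R$ makes the last expression $2\delta_{jk}$, as required.

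With Hermiticity and the Clifford relations established, the hypotheses of the uniqueness part of the preceding theorem are met for the family $\{\gamma'_j\}$ (in place of the primed family there), so there exists a unitary $A$ such that $\gamma'_j = A^*\gamma_j A$ for every $j=1,\ldots,d$, which is exactly the conclusion of the corollary.

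There is no real obstacle here: the corollary is essentially a one-line consequence of the uniqueness statement, the only content being the algebraic identity $RR^T=I$ used to transport the Clifford relations. The only point worth flagging is the need to keep indices straight in the anticommutator computation so that the orthogonality relation $\sum_l R_{jl}R_{kl}=\delta_{jk}$ (rather than the transposed version) appears; this requires no deep input beyond the preceding theorem.
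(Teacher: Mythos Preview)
Your proof is correct and is exactly the intended argument: the paper states the corollary without proof, as an immediate consequence of the uniqueness clause of the preceding theorem, and the only content is the verification that the rotated matrices remain Hermitian and satisfy the Clifford relations via $RR^T=I$, which you carry out correctly. (In fact the same anticommutator computation appears verbatim later in Appendix~\ref{genBfield} when the corollary is applied.)
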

The computation with $\gamma$ matrices can be sometimes tedious and the following framework called `second quantization' is quite helpful. 

In the remainder of this section, \emph{we assume that $d=2\nu+1$ is odd}. 

We single out the matrix $\gamma_1$ and define
the `annihilation' and  `creation'  operators
$$
c_j := \frac12 (\gamma_{2j} + i\gamma_{2j+1}) \ ,\qquad  c^*_j = \frac12(\gamma_{2j} - i\gamma_{2j+1}) \ ,\qquad  j =1,2, \dots \nu \,,
$$
so that
$$
\gamma_{2j} = c_j+c_j^* \ ,\qquad \gamma_{2j+1} = \frac1i ( c_j - c^*_j) \ .
$$
One easily checks that
$$
c_jc^*_j +c^*_jc_j = I \ ,\qquad c_j^2 = c^{*2}_j = 0
$$
and, for $k \not= \ell$,
$$
c_kc_\ell +c_\ell c_k = 0 \ ,\qquad c^*_kc^*_\ell +c^*_\ell c^*_k = 0 \ ,\qquad  c_k c^*_\ell + c^*_\ell c_k =0 \ .
$$
Note that the matrix $\gamma_1$ is not involved in these definitions.

\begin{lemma}
There exists a vector $\phi\in \C^{2^\nu}$, a vaccum,  such that $\|\phi\|=1$ and
$$
c_j\phi= 0 \ ,\qquad j=1, \dots , \nu \ .
$$
\end{lemma}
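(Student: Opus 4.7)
The plan is to use the standard "number operator" trick from second quantization. Define the self-adjoint matrix
$$
N := \sum_{j=1}^{\nu} c_j^* c_j
$$
on $\C^{2^\nu}$ and look for a vector $\phi$ realizing its smallest eigenvalue. Since $N$ is Hermitian on a finite-dimensional space, such a minimizing eigenvector exists; the claim will be that any such $\phi$, suitably normalized, is a vacuum.

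The heart of the argument is the commutation relation $[N, c_k] = -c_k$ for each $k=1,\dots,\nu$. I would verify this piece by piece from the CAR's displayed above: for $j\neq k$ one has $c_j^* c_k = -c_k c_j^*$ and $c_j c_k = -c_k c_j$, so $c_j^* c_j c_k = c_k c_j^* c_j$ and hence $[c_j^* c_j, c_k]=0$; for $j=k$ one uses $c_k^2 = 0$ together with $c_k c_k^* = I - c_k^* c_k$ to get $c_k^*c_k c_k = 0$ and $c_k c_k^* c_k = c_k$, whence $[c_k^*c_k,c_k] = -c_k$. Summing over $j$ gives the claimed relation.

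With this in hand, let $\lambda_0$ be the smallest eigenvalue of $N$ and let $\phi\neq 0$ satisfy $N\phi = \lambda_0\phi$. For any $k$,
$$
N (c_k \phi) = c_k N \phi + [N, c_k]\phi = (\lambda_0 - 1)\, c_k\phi.
$$
If $c_k\phi$ were nonzero it would be an eigenvector of $N$ with eigenvalue $\lambda_0 - 1 < \lambda_0$, contradicting minimality. Hence $c_k\phi = 0$ for every $k$, and after rescaling so that $\|\phi\|=1$ we obtain the desired vacuum.

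I do not anticipate a genuine obstacle here: the argument is a finite-dimensional version of the Fock-space construction, and the only computational step requiring care is the commutator identity $[N,c_k]=-c_k$, which is routine once the CAR's have been recorded. Note that we did not need to invoke the classification theorem or the fact that $N=2^\nu$; the proof works on any finite-dimensional Hilbert space on which operators satisfying the CAR's act.
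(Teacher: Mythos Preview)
Your proof is correct. The paper takes a different, more iterative route: it starts with any nonzero $\phi$ in $\ker c_1$ (which exists since $c_1^2=0$), then if $c_k\phi\neq 0$ for the smallest such $k$, it replaces $\phi$ by $c_k\phi$ and observes from the anticommutation relations that this new vector lies in $\ker c_1\cap\cdots\cap\ker c_k$; repeating at most $\nu$ times yields the vacuum. Your number-operator argument is the standard Fock-space construction and is arguably cleaner conceptually: it packages the whole induction into a single spectral step and immediately identifies the vacuum as the ground state of $N=\sum_j c_j^*c_j$ (hence also $\lambda_0=0$, since $\langle\phi,N\phi\rangle=\sum_j\|c_j\phi\|^2$). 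The paper's proof, on the other hand, is slightly more constructive and avoids any commutator computation. Both arguments use only the CAR's and finite-dimensionality, so your closing remark about not needing the classification theorem applies equally to the paper's approach.
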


\begin{proof}
Since $c_1^2=0$, it is clear that there exists $\phi\neq 0$ such that $c_1\phi=0$. Let $k$ be the first index such that
$c_k \phi \not= 0$. Setting $\psi = c_k \phi$ we see because of the commutation relations that $c_j \psi = 0,\ j=1, \dots, k-1$, and $c_k \psi = c_k^2 \phi = 0$. Thus, replacing $\phi$ by $\psi$ we have $c_j\psi = 0, j=1, \dots, k$. Continuing in this fashion we have a vector $\phi$ such that $c_k\phi=0$ for all $k=1, \dots, \nu$.
\end{proof}

\begin{lemma}\label{complete}
	Let $\vec \beta=(\beta_1, \dots, \beta_\nu) $ be a sequence with $\beta_j \in \{0,1\},\, j=1, \dots, \nu$. Then the vector
$$
|\vec \beta \rangle = c^{*\beta_1}_{k_1} \cdots c^{*\beta_\nu}_{k_\nu} \phi
$$
is non-zero if and only if the indices  $k_1, \dots, k_\nu$ are all distinct. In this case the vector is normalized. Moreover,
the vectors $|\vec \beta \rangle$ form an orthonormal basis in $\C^{2^\nu}$.
\end{lemma}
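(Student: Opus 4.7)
The plan is to exploit the canonical anticommutation relations (CAR) derived above, namely $c_k c_\ell^* + c_\ell^* c_k = \delta_{k\ell} I$, $c_k c_\ell + c_\ell c_k = 0$, and $c_k^* c_\ell^* + c_\ell^* c_k^* = 0$, together with the defining property $c_k\phi = 0$ of the vacuum, to reduce every product $|\vec\beta\rangle$ to a canonical form and then to read off orthonormality and the count.

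First, using $c_\ell^* c_k^* = -c_k^* c_\ell^*$, I would reorder the creation operators appearing in $|\vec\beta\rangle$ so that their indices occur in increasing order, at the cost of a sign $\pm 1$. If two active indices coincide, the reordering brings two copies of the same $c_k^*$ adjacent, and $(c_k^*)^2=0$ (a special case of the anticommutator) forces the vector to vanish. Conversely, if the active indices are distinct, $|\vec\beta\rangle = \pm c_{j_1}^*\cdots c_{j_m}^*\phi$ for some $j_1<\cdots<j_m$, and I would compute its squared norm as $\langle \phi, c_{j_m}\cdots c_{j_1}c_{j_1}^*\cdots c_{j_m}^*\phi\rangle$. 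Moving annihilation operators rightward through the block of creation operators using $c_k c_\ell^* = \delta_{k\ell}I - c_\ell^* c_k$, one sees that any term in which a $c_k$ slips past all the $c_j^*$'s acts directly on $\phi$ and vanishes; the only surviving contribution iteratively collapses each matched pair via the $\delta_{k\ell}I$ term, yielding $\langle\phi,\phi\rangle=1$. Hence $|\vec\beta\rangle$ is normalized whenever it is nonzero.

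For orthogonality and basis completeness I would introduce the number operators $N_k := c_k^* c_k$. These are self-adjoint and commute pairwise (a direct consequence of the CAR), and the same manipulation used for the norm shows $N_k |\vec\beta\rangle = \beta_k |\vec\beta\rangle$: if $k$ is not among the active indices, $c_k$ anticommutes past each $c_{j_i}^*$ and annihilates $\phi$, while if $k=j_i$ the identity term in $c_k c_{j_i}^* = I - c_{j_i}^* c_k$ reproduces $|\vec\beta\rangle$ and the remaining term again dies on $\phi$. Distinct choices of $\vec\beta$ therefore lie in distinct joint eigenspaces of the commuting family $\{N_k\}$, forcing $\langle \vec\beta|\vec\beta'\rangle=0$ for $\vec\beta\neq\vec\beta'$. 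Since there are exactly $2^\nu$ orthonormal vectors $|\vec\beta\rangle$ parametrized by $\vec\beta\in\{0,1\}^\nu$ inside the $2^\nu$-dimensional space $\C^{2^\nu}$, they must span it, giving the claimed orthonormal basis.

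The main obstacle is not conceptual but organizational: keeping careful track of the signs produced by the reordering and the elimination of annihilation operators when they act on $\phi$. Once the number operators are identified, no further cleverness is required, since their simultaneous diagonalization makes both the orthogonality and the normalization transparent.
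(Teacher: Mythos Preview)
Your proof is correct and follows the same core strategy as the paper: use the CAR to show vanishing when active indices repeat (via $(c_k^*)^2=0$) and to compute the norm by iteratively peeling off matched $c_k c_k^*$ pairs, with any unmatched annihilator sliding rightward and killing $\phi$. The one place you diverge is in the orthogonality argument. The paper simply remarks that the same commutation manipulation used for the norm also gives $\langle\vec\beta\,|\,\vec\beta'\rangle=0$ for $\vec\beta\neq\vec\beta'$, since in the resulting product some annihilator has no matching creator and therefore reaches $\phi$; you instead introduce the commuting number operators $N_k=c_k^*c_k$ and observe that distinct $\vec\beta$ lie in distinct joint eigenspaces. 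Your route is a bit more structured and makes orthogonality transparent without any sign bookkeeping, while the paper's route is more bare-handed but avoids introducing extra machinery. Both are standard and equally valid.
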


In view of this lemma, we will sometimes denote $\phi=|0\rangle$.

\begin{proof}
If one or more of the indices are not distinct, then by commuting the various operators results in a square of one of the $c_i^*$, which is zero. Hence we may assume that the indices  $k_1, \dots, k_\nu$ are all distinct. We also may assume that $\beta_1 = 1$ because otherwise $c_{k_1}^{\beta_1} = I$ and we may move on to the next index. We have
$$
\Vert c^{*\beta_1}_{k_1} \cdots c^{*\beta_\nu}_{k_\nu} \phi\Vert^2 = \left( | 0\rangle , c^{\beta_\nu}_{k_\nu} \cdots c^{\beta_1}_{k_1}c^{*\beta_1}_{k_1} \cdots c^{*\beta_\nu}_{k_\nu} \phi\right)
$$
and using $c^{\beta_1}_{k_1}c^{*\beta_1}_{k_1} = I - c^{*\beta_1}_{k_1}c^{\beta_1}_{k_1} $ we find
$$
\left(| 0\rangle, c^{\beta_\nu}_{k_\nu} \cdots c^{\beta_1}_{k_1}c^{*\beta_1}_{k_1} \cdots c^{*\beta_\nu}_{k_\nu} \phi\right) = \left(| 0\rangle, c^{\beta_\nu}_{k_\nu} \cdots c^{\beta_2}_{k_2}c^{*\beta_2}_{k_2} \cdots c^{*\beta_\nu}_{k_\nu} \phi\right)
 -\left(| 0\rangle, c^{\beta_\nu}_{k_\nu} \cdots c^{*\beta_1}_{k_1}c^{\beta_1}_{k_1} \cdots c^{*\beta_\nu}_{k_\nu} \phi\right) \ .
$$
The second term on the right side vanishes because the indices are distinct  and thus $c^{\beta_1}_{k_1}$ either commutes or anti-commutes with all the matrices on the right and once it hits $\phi$ it yields zero. In this fashion we may move the
annihilation matrices to the right and obtain that this state is normalized.
Incidentally this also makes it clear that the state vanishes if two indices are the same on account of the fact that
$c_j^2 = c^{*2}_j=0$. From this argument it also follows that for $\vec \beta \not = \vec \beta'$ 
$$
\left( |\vec \beta\rangle, |\vec \beta'\rangle\right) = 0
$$
and hence we have $2^\nu$ orthonormal vectors which constitute an orthonormal basis.
\end{proof}

\begin{lemma}\label{unique}
The vacuum is unique (up to a constant phase).
\end{lemma}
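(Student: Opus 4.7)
The plan is to show that any vacuum $\phi'$ (i.e., any unit vector annihilated by all the $c_j$) must be a scalar multiple of the previously constructed vacuum $\phi = |0\rangle$, using the fact from Lemma \ref{complete} that the vectors $|\vec\beta\rangle = (c_1^*)^{\beta_1}\cdots (c_\nu^*)^{\beta_\nu}\phi$ form an orthonormal basis of $\C^{2^\nu}$ (where for the purposes of this proof we take the canonical choice of the $k_j$).

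The concrete steps I would carry out are as follows. First, expand $\phi' = \sum_{\vec\beta} \alpha_{\vec\beta} |\vec\beta\rangle$ with coefficients $\alpha_{\vec\beta} = (|\vec\beta\rangle, \phi')$. Using the definition of $|\vec\beta\rangle$ and taking the adjoint of the string of creation operators, rewrite
$$
\alpha_{\vec\beta} = \bigl(\phi, c_\nu^{\beta_\nu}\cdots c_2^{\beta_2} c_1^{\beta_1}\phi'\bigr).
$$
Next, suppose $\vec\beta \neq \vec 0$ and let $j_0$ be the smallest index with $\beta_{j_0}=1$. Then $c_j^{\beta_j} = I$ for all $j < j_0$, so the rightmost nontrivial factor acting on $\phi'$ is $c_{j_0}$, which by hypothesis annihilates $\phi'$. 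Hence $\alpha_{\vec\beta} = 0$ for all $\vec\beta \neq \vec 0$, so $\phi' = \alpha_{\vec 0}\phi$. Normalization $\|\phi'\| = 1 = \|\phi\|$ then forces $|\alpha_{\vec 0}| = 1$, so $\phi' = e^{i\theta}\phi$.

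There is really no hard step here; the only thing to be careful about is the bookkeeping of operator order when passing to the adjoint, and the observation that among $c_1^{\beta_1},\dots,c_\nu^{\beta_\nu}$ it is the one with the smallest index (not the largest) that acts first on $\phi'$ in the product $c_\nu^{\beta_\nu}\cdots c_1^{\beta_1}\phi'$. Once this is in place, the annihilation condition on $\phi'$ makes the argument immediate.
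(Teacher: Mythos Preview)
Your proof is correct and is essentially the same argument as the paper's: both use Lemma~\ref{complete} together with the fact that $(|\vec\beta\rangle,\phi')=(\phi,c_\nu^{\beta_\nu}\cdots c_1^{\beta_1}\phi')$ vanishes for $\vec\beta\neq\vec0$ because the rightmost nontrivial annihilation operator kills $\phi'$. The only cosmetic difference is that the paper phrases it as a contradiction (assume $\langle 0|v\rangle=0$ and conclude $v=0$), whereas you compute the basis expansion directly.
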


\begin{proof}
Suppose that $v$ is another vacuum, i.e.~$\|v\|=1$ and for all $\alpha=1, \dots, \nu$,
$$
c_\alpha v = 0 \ .
$$
We may assume that $\langle 0|v\rangle = 0$.
Then 
$$
\left(v,  c^{*\beta_1}_{k_1} \cdots c^{*\beta_\nu}_{k_\nu} \phi\right)
$$
is always zero and therefore, by Lemma \ref{complete}, $v=0$, which is a contradiction.
\end{proof}

We note that $\gamma_1 \phi$ satisfies the same properties as $\phi$, namely, $\|\gamma_1\phi\|=1$ and
$$
c_\alpha \gamma_1 \phi= -  \gamma_1 c_\alpha\phi= 0
$$
for all $\alpha$. By the uniqueness result of Lemma \ref{unique} there is a $\theta\in\R$ such that $\gamma_1 \phi= e^{i\theta} \phi$. Since $\gamma_1$ is self-adjoint, we have $e^{i\theta}=\pm 1$. In case it is $-1$, we can change the sign of $\gamma_1$ without changing the commutation relations and arrive at the same relation with $+1$. Hence we may adopt the convention that $\gamma_1\phi= \phi$.

The point about introducing this formalism is the following result.
\begin{lemma}
Introduce a $(2\nu+1)\times(2\nu+1)$ matrix $\omega$ with entries
$$
\omega_{\alpha,\beta} =
\begin{cases}
0 & \text{if}\ \alpha=1 \ \text{or}\ \beta =1 \ \text{or}\ \alpha=\beta \,, \\
\langle 0 | i \gamma_{\alpha }\gamma_{\beta} |0\rangle & \text{otherwise} \,.
\end{cases}
$$
Then
$$
\omega = {\rm diag} ( 0, -i\sigma_2,\ldots, -i\sigma_2) \,,
$$
where the zero is a number and there are $\nu$ $2\times 2$-blocks $i\sigma_2$.
\end{lemma}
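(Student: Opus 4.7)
The plan is to compute $\langle 0 | i\gamma_\alpha \gamma_\beta | 0 \rangle$ case by case, exploiting the representation of $\gamma_{2j}$ and $\gamma_{2j+1}$ in terms of the creation and annihilation operators $c_j^*, c_j$, for $j=1,\dots,\nu$. Throughout the argument the two facts I will use repeatedly are $c_k|0\rangle = 0$ (so $\langle 0 | c_k^* = 0$ by taking adjoints), together with the canonical anti-commutation relations of the $c_k, c_k^*$'s. The first row and column, and the diagonal, of $\omega$ are zero by the very definition, so it suffices to analyse the block indexed by $\alpha,\beta \in \{2,\dots,2\nu+1\}$ with $\alpha\neq\beta$.

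First I would rule out the off-diagonal $2\times 2$ blocks, i.e. the case where $\alpha$ and $\beta$ belong to two different pairs $\{2j,2j+1\}$ and $\{2k,2k+1\}$, $j\neq k$. In that case $\gamma_\alpha \gamma_\beta$ expands into a sum of four products of the form $c_j^{(\ast)} c_k^{(\ast)}$. Each of these has zero vacuum expectation value: the terms ending in $c_k$ kill the ket; the terms $c_j^\ast c_k^\ast$ produce a two-particle state orthogonal to $|0\rangle$; and $c_j c_k^\ast$ can be rewritten as $-c_k^\ast c_j$ (by the fact that $c_j$ anti-commutes with $c_k^\ast$ for $j\neq k$), which then annihilates $|0\rangle$. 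Hence $\omega_{\alpha,\beta}=0$ outside the block-diagonal pairs.

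Next I would compute the entries inside one diagonal $2\times 2$ block indexed by $(2j,2j+1)$. Substituting $\gamma_{2j}=c_j+c_j^\ast$ and $\gamma_{2j+1}=\frac{1}{i}(c_j-c_j^\ast)$ and using $c_j^2=c_j^{\ast 2}=0$, together with $c_j c_j^\ast = 1 - c_j^\ast c_j$, I get
$$
\gamma_{2j}\gamma_{2j+1} \;=\; -i\bigl(c_j^\ast c_j - c_j c_j^\ast\bigr) \;=\; -i\bigl(2c_j^\ast c_j - 1\bigr).
$$
Applying this to the vacuum gives $\gamma_{2j}\gamma_{2j+1}|0\rangle = i|0\rangle$, hence $\omega_{2j,2j+1}=\langle 0 | i\gamma_{2j}\gamma_{2j+1}|0\rangle = -1$. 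The opposite entry is then $\omega_{2j+1,2j}=+1$ by the anti-commutation $\gamma_{2j+1}\gamma_{2j}=-\gamma_{2j}\gamma_{2j+1}$. Therefore the $(2j,2j+1)$-block of $\omega$ is
$$
\begin{pmatrix} 0 & -1 \\ 1 & 0 \end{pmatrix},
$$
which equals $-i\sigma_2$.

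Putting everything together yields the claimed block-diagonal form $\omega = \operatorname{diag}(0,-i\sigma_2,\dots,-i\sigma_2)$, with $\nu$ copies of the $2\times 2$ block. There is no real obstacle: the only subtlety is careful bookkeeping to distinguish the $j=k$ pair from the $j\neq k$ case and to keep track of the factor of $i$ coming from the definition of $\gamma_{2j+1}$; every step reduces to a one- or two-line manipulation of the CAR algebra on the vacuum.
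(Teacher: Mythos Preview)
Your proof is correct and follows essentially the same route as the paper: both arguments express $\gamma_\alpha\gamma_\beta$ in terms of the $c_j,c_j^*$ and evaluate vacuum expectations using the CAR relations. The only cosmetic difference is the case split --- you group by ``same pair vs.\ different pair'' while the paper groups by the parity of $\alpha$ and $\beta$ --- but the underlying computation is identical.
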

\begin{proof}
Since $\omega$ is skew and vanishes on the diagonal, it suffices to compute $\omega_{\alpha,\beta}$ when $\alpha<\beta$. Moreover, since $\gamma_1 |0\rangle =0$ and $\gamma_1$ is selfadjoint, we have $\omega_{1,\beta}=0$ for all $\beta>1$. For the remaining entries, we need to distinguish whether $\alpha$ and $\beta$ are even or odd. When both are even, we have for $1\leq j< k\leq\nu$,
$$
\omega_{2j,2k} =  \langle 0| i(c_j+c_j^*)(c_k+c_k^*)|0\rangle = i \langle 0|c_jc^*_k |0\rangle = 0 \ .
$$
Similarly, when both are odd, we have for $1\leq j<k\leq\nu$,
$$
\omega_{2j+1,2k+1} =  - \langle 0| i(c_j-c_j^*)(c_k-c_k^*)|0\rangle = - i \langle 0|c_jc^*_k |0\rangle = 0 \ .
$$
Next, we consider $\alpha$ is even and $\beta$ is odd. If $\alpha=\beta-1$, we get
$$
\omega_{2j,2j+1} = \langle 0 |  (c_j+c_j^*)(c_j -c^*_j) |0\rangle  = -1 \ .
$$
Otherwise, for $1\leq j<k\leq\nu$,
$$
\omega_{2j,2k+1} = \langle 0 |  (c_j+c_j^*) (c_k-c^*_k) |0\rangle = 0 \,.
$$
Finally, we have the case where $\alpha$ is odd and $\beta$ is even. For $1\leq j<k\leq\nu$, we get
$$
\omega_{2j+1,2k} = \langle 0 |  (c_j-c_j^*) (c_k+c^*_k) |0\rangle = 0 \,.
$$
This proves the claimed formula for the entries of the matrix $\omega$.
\end{proof}

After these preliminaries we discuss now an alternative approach to the Dunne--Min generalization \cite{Dunne-Min} of \cite{LossYau}. The following example is relevant. It is the higher dimensional analog of choice for the vector potential in \cite{LossYau}. 
Consider
\begin{equation} \label{spinvector}
((1+i \vec \gamma \cdot x)\eta,  \vec \gamma (1+i \vec \gamma \cdot x)\eta) \ ,
\end{equation}
where $\eta\in\C^{2^\nu}$ is normalized. Recall that for $d=3$ the $\gamma$ matrices are the Pauli matrices and there is the well know identity
$$
|(\eta, \vec \sigma \eta)|^2 = |\eta|^4 \ .
$$
This leads to the identity
$$
\vec \sigma \cdot (\eta, \vec \sigma \eta) \eta = \eta \ ,
$$
which is very useful  for constructing zero modes. It turns out that this identity also holds for $d=5$, but not in higher dimensions. In particular, it does not hold for 
\eqref{spinvector} for general $\eta$.
 
Things simplify considerably if we choose the constant spinor $\eta$ to be the vaccum $\phi$. We compute
\begin{align*}
	(1 - i \vec \gamma \cdot x) \gamma_j  (1+i \vec \gamma \cdot x)  
	& = \gamma_j -i \vec \gamma \cdot x \gamma_j + i \gamma_j \vec \gamma \cdot x + \vec \gamma \cdot x \gamma_j \vec \gamma \cdot x \\
	& = \gamma_j - 2i x \cdot \vec \gamma  \gamma_j + i x \cdot (\gamma_j \vec \gamma + \vec \gamma \gamma_j) - (x \cdot \vec \gamma )^2 \gamma_j + x \cdot \vec \gamma x \cdot (\gamma_j \vec \gamma + \vec \gamma \gamma_j) \\
	& = \gamma_j - 2i x \cdot \vec \gamma  \gamma_j + 2i x_j  -|x|^2\gamma_j + 2x \cdot \vec \gamma x_j \\
	& =(1-|x|^2) \gamma_j  + 2x \cdot \vec \gamma x_j  - 2i x \cdot \vec \gamma  \gamma_j + 2i x_j  \,.
\end{align*}
Taking expectation we get
$$
((1+i \vec \gamma \cdot x)\phi,  \gamma_j  (1+i \vec \gamma \cdot x)\phi) = 
(1-|x|^2) (\phi, \gamma_j \phi) + 2 x \cdot (\phi, \vec \gamma \phi) x_j  - 2 \sum_{k \not= j, 1} x_k (\phi, i\gamma_k \gamma_j \phi) \ .
$$
Since $\gamma_1 \phi = \phi$, we find that
$(\phi, \gamma_j \phi) = 0, j\not=1$. Hence we have that for this particular state
$$
((1+i \vec \gamma \cdot x)\phi,  \gamma_1 (1+i \vec \gamma \cdot x)\phi) = (1-|x|^2 +2x_1^2)|\phi|^2 \ .
$$
For the component $j\not= 1$ we find
$$
((1+i \vec \gamma \cdot x)\phi,  \gamma_j  (1+i \vec \gamma \cdot x)\phi) =  (2 x_1x_j + 2 [\omega x]_j ) |\phi|^2 \,.
$$
Here $\omega$ is the $(2\nu+1) \times (2\nu+1)$ skew matrix introduced above.
We introduce the field
$$
U_j(x) := \frac{1}{1+|x|^2}\left( (1 + i \vec \gamma \cdot x)\phi,  \gamma_j  (1+i \vec \gamma \cdot x)\phi\right)
=
\begin{cases}
	\frac{1}{1+|x|^2} (1-|x|^2 +2x_1^2) & \text{if}\ j=1 \,, \\
	\frac{1}{1+|x|^2} \left( 2 x_1x_j  + 2 [\omega x]_j\right) & \text{if}\ j \neq 1 \,.
\end{cases}
$$
This can be written more concisely as
$$
U(\vec x) =  \frac{1}{1+|x|^2} \left( (1-|\vec x|^2) \vec e_1 +2 (\vec e_1 \cdot \vec x) \vec x+ 2 \omega \vec x\right),
$$
where $[\omega \vec x]_k = \sum_{j=1}^d \omega_{kj} x_j $.
A straightforward computation shows that
\begin{align*}
	|U(\vec x)|^2 & =  \frac{1}{(1+|x|^2)^2} \left( (1-|\vec x|^2) \vec e_1 +2 (\vec e_1 \cdot \vec x) \vec x+ 2 \omega \vec x\right)^2 \\
	& = \frac{1}{(1+|x|^2)^2} \left( (1-|\vec x|^2)^2 + 4 (\vec e_1 \cdot \vec x)^2 |x|^2 + 4 |\omega \vec x|^2 + 4(1-|x|^2) (\vec e_1 \cdot \vec x)^2 \right).
\end{align*}
Since $ |\omega \vec x|^2 = (\omega \vec x, \omega \vec x) = (\vec x, \omega^T \omega \vec x) = \sum_{j=2}^d x_j^2$
we get
$$
|U(\vec x)|^2 = 1 \,.
$$
In other words, the vector
$$
\vec U(\vec x) =\left( \frac{1+i\vec x \cdot \vec \gamma}{(1+|x|^2)^{1/2}} \phi, \vec \gamma  \frac{1+i\vec x \cdot \vec \gamma)}{(1+|x|^2)^{1/2}} \phi\right)
$$
is a unit vector. Now consider the self adjoint matrix
$$
M:= \vec U(\vec x) \cdot \vec \gamma \ ,
$$
whose square is $|\vec U(\vec x)|^2 = 1$. Hence the eigenvalues of $M$ are $\pm 1$. Moreover
$$
\left( \frac{1+i\vec x \cdot \vec \gamma}{(1+|x|^2)^{1/2}} \phi, M \frac{1+i\vec x \cdot \vec \gamma}{(1+|x|^2)^{1/2}} \phi\right) = |\vec U(\vec x)|^2= 1
$$
and hence  we have that
$$
M \frac{1+i\vec x \cdot \vec \gamma}{(1+|x|^2)^{1/2}} \phi = \frac{1+i\vec x \cdot \vec \gamma}{(1+|x|^2)^{1/2}} \phi \ .
$$
If we set
$$
\psi :=  \frac{1+i\vec x \cdot \vec \gamma}{(1+|x|^2)^{d/2}} \phi  \ ,
$$
then a simple computation yields
$$
-i \vec \gamma \cdot \nabla \psi =  \frac{d}{1+|x|^2} \psi
$$
and if we define 
$$
A(x) :=  \frac{d}{1+|x|^2} U(x) \ ,
$$
then
$$
-i \vec \gamma \cdot \nabla \psi = \vec \gamma \cdot A \psi
$$
and we have constructed our zero modes.


\section{Generalization of the spin-field interaction term  to arbitrary dimensions} \label{genBfield}

Squaring the Dirac equation yields
\begin{align*}
	[ \gamma \cdot (-i\nabla -A) ]^2 & = \sum_{jk} \gamma_j \gamma_k (-i\partial_j -A_j)(-i\partial_k -A_k) \\
	& =\sum_j \gamma_j \gamma_j (-i\partial_j -A_j)(-i\partial_j-A_j) + \sum_{j\not=k} \gamma_j \gamma_k (-i\partial_j -A_j)(-i\partial_k -A_k) \,.
\end{align*}
We have
$$
\sum_j \gamma_j \gamma_j (-i\partial_j -A_j)(-i\partial_j-A_j) = (-i \nabla -A)^2
$$
and
\begin{align*}
	& \sum_{j\not=k} \gamma_j \gamma_k (-i\partial_j -A_j)(-i\partial_k -A_k) \\
	& = \frac12  \sum_{j\not=k} \gamma_j\gamma_k (-i\partial_j -A_j)(-i\partial_k -A_k) + \frac 12 \sum_{j\not=k} \gamma_k \gamma_j (-i\partial_k -A_k)(-i\partial_j -A_j) \\
	& =  \frac12  \sum_{j\not=k} \gamma_j\gamma_k \left[ (-i\partial_j -A_j)(-i\partial_k -A_k) - (-i\partial_k -A_k)(-i\partial_j -A_j)\right] \\
	& = \frac{i}{2}  \sum_{j\not=k} \gamma_j\gamma_k \left[ \partial_jA_k - \partial_k A_j \right].
\end{align*}
For each fixed $x\in\R^d$, the matrix $B_{jk} :=  \partial_jA_k - \partial_k A_j$ is an antisymmetric matrix and  there is an orthogonal matrix $R$ (depending on $x$) such that
$$
R^T B R = D
$$
where 
$$
D=
\begin{cases}
	{\rm diag} (D_1i\sigma_2,\ldots,D_\nu i\sigma_2,0) & \text{if}\ d=2\nu+1 \ \text{is odd} \,,\\
	{\rm diag} (D_1i\sigma_2,\ldots,D_\nu i\sigma_2) & \text{if}\ d=2\nu \ \text{is even} \,.
\end{cases}
$$
Here there are $\nu$ $2\times 2$ blocks $i\sigma_2$ and, if $d$ is odd, an additional $1\times 1$ `block' consisting of the number $0$. For instance, in 5 dimensions
$$
\left[\begin{array}{ccccc} 0 & D_1 & 0 & 0 &  0 \\-D_1 & 0 & 0 & 0 & 0 \\  0 & 0 & 0 & D_2 & 0 \\  0 & 0 & -D_2& 0 & 0 \\ 0 & 0  & 0 & 0 & 0 \end{array}\right] \ .
$$

Since the trace of $B$ is zero, we have
$$
 \sum_{j} \gamma_j\gamma_j B_{jj} = 0 \,.
$$
 Hence
\begin{align*}
	\frac{i}{2}  \sum_{j\not=k} \gamma_j\gamma_k B_{jk}  & = \frac{i}{2}  \sum_{j k} \gamma_j\gamma_k B_{jk} =  \frac{i}{2} \sum_{\alpha \beta}  \sum_{jk} \gamma_j\gamma_k R_{j\alpha} D_{\alpha \beta} R_{k \beta} \\
	& = \frac{i}{2}  \sum_{\alpha \beta} (  \sum_{j} \gamma_j R_{j\alpha}) ( \sum_k  \gamma_k R_{k\beta}) D_{\alpha \beta} \ .
\end{align*} 
If we set
$$
\Gamma_\beta :=  \sum_{j} \gamma_j R_{j\beta} \ ,
$$
then we have 
\begin{align*}
	\Gamma_\alpha \Gamma_\beta + \Gamma_\beta \Gamma_\alpha & =\sum_{jk}  \gamma_j \gamma_k R_{j \alpha} R_{k \beta}
	+ \sum_{jk}  \gamma_k \gamma_j R_{k \beta} R_{j \alpha} =  2\sum_{jk}  \delta_{jk}  R_{k \beta} R_{j \alpha} \\
	& =2\sum_j  R_{j \beta} R_{j \alpha}  = 2(R^TR)_{\beta, \alpha} = 2 \delta_{\alpha \beta}.
\end{align*}
Hence, according to Corollary \ref{hurwitzcor},  there exists a unitary matrix $U$ such that
$$
\Gamma_\alpha = U^* \gamma_\alpha U \ ,
\qquad \alpha=1,\ldots, d \ ,
$$
and we can write
$$
 \frac{i}{2}  \sum_{j\neq k} \gamma_j\gamma_k B_{jk}  = U^*  \frac{i}{2}  \sum_{\alpha \beta} \gamma_\alpha\gamma_\beta D_{\alpha\beta}  U
 = U^* i  \left[\gamma_1 \gamma_2 D_1 + \gamma_3\gamma_4 D_2 +\dots
 + \gamma_{2\nu-1} \gamma_{2\nu} D_\nu \right] U \,.
 $$
 The matrices $\gamma_1\gamma_2$ and $\gamma_3\gamma_4$ etc, are skew symmetric,  commute with each other  and we can
 simultaneously diagonalize them by a unitary matrix $V$, that is, 
$$
\gamma_{2k-1} \gamma_{2k} = -i V^* \Sigma_{2k-1, 2k} V \ ,
\qquad k=1,\ldots,\nu \ \
$$
with diagonal matrices $\Sigma_{2k-1 2k}$. Since $(\gamma_i \gamma_j)^2 =-1$ the eigenvalues of $\Sigma_{2k-1,2k}$ must be $\pm 1$.
Thus, all things considered, we get
 \begin{equation*}
  \frac{i}{2}  \sum_{j\neq k} \gamma_j\gamma_k B_{jk} =(VU)^* \left[ \Sigma_{12} D_1+ \Sigma_{34} D_3 + \cdots \Sigma_{2\nu-1, 2\nu} D_{\nu} \right] (VU)
 \end{equation*}
 where the matrices $\Sigma_{i,i+1}$ are diagonal and have $\pm 1$ in the diagonal.  Thus, if $\psi$ is a spinor, we have that
 $$
\left |  \langle \psi,  \frac{i}{2}  \sum_{j\neq k} \gamma_j\gamma_k B_{jk} \psi \rangle  \right | \le |\psi|^2 \sum_{k=1}^{\nu} |D_k|
$$
This fits with the three dimensional case where $\nu=1$ and $|D_1|=|B|$.

Moreover, we have
$$
\sum_{k=1}^{\nu} |D_k| \leq \nu^{1/2} \left( \sum_{k=1}^\nu |D_k|^2 \right)^{1/2} = \nu^{1/2} \left( \sum_{j<k} |B_{jk}|^2 \right)^{1/2}.
$$
The last identity comes from the fact that conjugation by an orthogonal matrix $R$ does not change the Hilbert--Schmidt norm of the matrix $D=R^TBR$.



\end{document}